\newtheorem*{theorema}{Theorem A}
\newtheorem*{theoremb}{Theorem B}
\newtheorem*{theoremc}{Theorem C}
\newtheorem*{corollary}{Corollary}
\newtheorem{prop}{Proposition}[section]
\newtheorem{lemma}[prop]{Lemma}
\theoremstyle{definition}
\newcommand{\bra}{[\![}
\newcommand{\ket}{]\!]}
\theoremstyle{remark}
\newtheorem{remark}[prop]{Remark}
\numberwithin{equation}{section}
\begin{document}

\author{Hiroki Takahasi}

\address{Keio Institute of Pure and Applied Sciences (KiPAS), Department of Mathematics,
Keio University, Yokohama,
223-8522, JAPAN} 
\email{hiroki@math.keio.ac.jp}

\subjclass[2020]{37A44, 37A50, 37A60, 60F10}
\thanks{{\it Keywords}: thermodynamic formalism; Gibbs state; Large Deviation Principle; periodic points;  equidistribution}
\date{\today}

\title[Level-2 LDP for countable Markov shifts without Gibbs states]
{Level-2 
Large deviation principle for\\ countable Markov shifts without Gibbs states}

\begin{abstract}
We consider level-2 large deviations for the one-sided countable full shift without assuming
the existence of Bowen's Gibbs state.  
To deal with non-compact closed sets,
we provide a sufficient condition in terms of inducing
which ensures the exponential tightness of a sequence of Borel probability measures constructed from periodic configurations.
Under this condition we establish the level-2 Large Deviation Principle.
We apply our results to the continued fraction expansion
of real numbers in $[0,1)$ generated by the R\'enyi map,
and obtain the level-2 Large Deviation Principle, as well as a weighted equidistribution of a set of quadratic irrationals 
to equilibrium states of the R\'enyi map.
\end{abstract}
\maketitle

\section{Introduction}\label{intro}
 Dynamical systems (iterated maps)
equipped with finite Markov partitions are represented as finite Markov shifts, and
the construction of relevant invariant measures and the investigation of their statistical properties are done on the symbolic level, with adaptations of ideas 
in statistical mechanics (see e.g., \cite{Bow75,BowRue75,Rue78,Sin72,Wal78}).
This thermodynamic formalism initiated in the  60s
has been successfully extended 
to maps with infinite Markov partitions and shift spaces with countably infinite number of states 
(see e.g., \cite{ADU93,BS03,FieFieYur02,GS98,MauUrb03,Sar99,Sar03,Y99}). This paper is concerned with level-2 large deviations for such {\it countable Markov shifts},  
 and its application to a dynamical system related to number theory.

The theory of large deviations aims to characterize limit behaviors of probability measures in terms of rate functions. 
Let $\mathcal X$ be a topological space, and
let $\mathcal M(\mathcal X)$ denote the space
of Borel probability measures on $\mathcal X$ endowed with the weak* topology. 
We say a sequence 
 $\{\tilde\mu_n\}_{n=1}^\infty$ in $\mathcal M(\mathcal X)$
 satisfies the
{\it Large Deviation Principle} (LDP)
 if there exists a lower semicontinuous function $I\colon\mathcal X\to[0,\infty]$ such that 
\begin{equation}\label{LDPlow}
 \liminf_{n\to\infty}\frac{1}{n}\log \tilde\mu_n(\mathcal G)
\geq -\inf_{\mathcal G} I\ \text{ for any open set } 
\mathcal G\subset\mathcal X,\end{equation}
and
\begin{equation}\label{LDPup}
\limsup_{n\to\infty}\frac{1}{n}\log \tilde\mu_n(\mathcal C)\leq-\inf_{\mathcal C}I \ 
\text{for any closed set }\mathcal C\subset \mathcal X,\end{equation}
where $\inf\emptyset=\infty$, $\log0=-\infty$ by convention.
We call $x\in \mathcal X$
a minimizer if $I(x)=0$ holds. The set of minimizers is a closed set.
 The LDP means that in the limit $n\to\infty$ the measure
      $\tilde\mu_n$ assigns
      all but exponentially small mass 
      to the set of minimizers.
      The function $I$ is called a {\it rate function}, and called a {\it good rate function}
      if its level set $\{x\in\mathcal X\colon I(x)\leq\alpha\}$ is compact
for any $\alpha>0$.
If $\mathcal X$ is a metric space and $\{\tilde\mu_n\}_{n=1}^\infty$ satisfies the LDP, the rate function is unique.
   The setup in our mind is that
 $\mathcal X$ is the space of Borel probability measures on a topological space $X$ on which a Borel map $\sigma$ acts, and each 
 $\tilde\mu_n\in\mathcal M(\mathcal X)$ is given in terms of 
 empirical measures \[V_n(x)=\frac{1}{n}(\delta_x+\delta_{\sigma x}+\cdots\delta_{\sigma^{n-1}x}),\]
 where $\delta_{\sigma^kx}\in\mathcal X$ denotes the unit point mass at $\sigma^kx\in X$.
 We refer to the LDP in this setup as {\it level-2} \cite[Chapter~1]{Ell85}.


 For topologically mixing finite Markov shifts together with H\"older continuous potentials, the level-2 LDP for empirical distributions
 and that for sequences constructed from empirical measures on periodic orbits  were established in \cite{Kif90,OrePel89,Tak84} and \cite{Kif94} respectively.
A key ingredient 
in these classical cases is the existence of Bowen's Gibbs states \cite{Bow75}. 
With the aid of Bowen's Gibbs states, one can
 deduce the lower bound \eqref{LDPlow} 
by combining
Birkhoff's and Shannon-McMillan-Breiman's theorems, and the upper bound \eqref{LDPup} 
by modifying the standard proof of the variational principle \cite{Wal82} (see \cite{Tak84}).
For countable Markov shifts, Bowen's Gibbs states were constructed
 under the assumption of a good regularity of 
 potentials and a strong connectivity of transition matrices
 defining the shift spaces (see e.g., \cite{ADU93,BS03,GS98,MauUrb03,Sar99}). 
Several level-2 LDPs were established in \cite{Tak19} under the existence of Bowen's Gibbs states.
For a Markov chain on a countable state space,
a `weak level-2 LDP' restricting the upper bound to compact closed sets was obtained in
 \cite{FF02}.
 
It has been realized that
 not all dynamically relevant invariant probability measures correspond to Bowen's Gibbs states.
 One of the best known examples is
  the absolutely continuous invariant probability measure of an interval map of Manneville-Pomeau type, with finitely many branches and a neutral fixed point. 
Such a measure still retains a weak form of Bowen's Gibbs state \cite{Y99},
and has the weak Gibbsian property in statistical mechanics sense \cite{EFS93,MRTMV00,Yur03}.
For a thermodynamic formalism and level-2 large deviations for a class of this map, see \cite{Hof77,PS92,Y99}
and \cite{PS09,PSY98} respectively.
With these historical developments and the abundance of interesting dynamical systems 
 modeled by countable Markov shifts without Bowen's Gibbs states (see e.g., \cite{Iom10,Yur05}), it
 is important 
 to establish the level-2 LDP for 
 countable Markov shifts without assuming the existence of Bowen's Gibbs states.

A main new difficulty for countable Markov shifts
 is a treatment of non-compact closed sets.
 We say a sequence $\{\tilde\mu_n\}_{n=1}^\infty$ of Borel probability measures on a non-compact space $\mathcal X$ is
 {\it exponentially tight}  if for any $L>0$ there exists a compact set $\mathcal K\subset\mathcal X$
such that 
\[\limsup_{n\to\infty}\frac{1}{n}\log\tilde\mu_n(\mathcal X\setminus \mathcal K)\leq-L.\]
If $\{\tilde\mu_n\}_{n=1}^\infty$ is
  exponentially tight, then one only has to consider compact closed sets in verifying
the upper bound \eqref{LDPup},
  see e.g., \cite{DemZei98} for details.
 The proof of the
level-2 LDPs in \cite{Tak19}
relies on the existence of Bowen's Gibbs states in order to verify the exponential tightness. 

In the lack of Bowen's Gibbs states,
 our strategy is to
   use {\it inducing} to verify the exponential tightness. Inducing is a familiar procedure in ergodic theory originally considered in works by Kakutani, Rohlin and others, and was used in the construction of absolutely continuous invariant measures or Gibbs-equilibrium states (see e.g., \cite{A73,Bow79,PesSen08,P80}). An inducing scheme we use here is given by the first return map to an a priori fixed domain. 
     In terms of this inducing,
   we will formulate a sufficient condition  which ensures 
   the exponential tightness for the original system. A key concept is that of {\it local Gibbs states}, to be introduced in Section~\ref{ind-G}.

\subsection{Statements of results}
Throughout the rest of this paper, 
let $\mathbb N$ denote the discrete set of positive integers.
Let $X$ denote
the one-sided infinite Cartesian product topological space of $\mathbb N$, called a {\it countable full shift}.
The topology of $X$ has a base that consists of cylinders
\[[p_1\cdots p_n]=\{x=(x_n)_{n=1}^\infty\in X\colon x_k=p_k\text{ for }
 k\in\{1,2,\ldots, n\}\},\]
where $n\geq1$ and $p_1\cdots p_n\in\mathbb N^n$. This topology is metrizable with the metric $d_X(x,y)=
\exp\left(-\inf\{n\geq1\colon  x_n\neq y_n\}\right)$
where $\exp(-\infty)=0$ by convention. 
Let
 $\sigma$ denote the left shift acting on $X$ continuously: $(\sigma x)_n=x_{n+1}$ for $n\geq1$.

Let $\phi\colon X\to\mathbb R$ be a function, called a {\it potential}.
We say $\phi$ 
 is {\it acceptable} if it is uniformly continuous and satisfies
\[\sup_{p\in \mathbb N}\left(\sup_{[p]}\phi-\inf_{[p]}\phi\right)<\infty.\]
We say $\phi$
is {\it locally H\"older continuous} if there exist
  $C>0$ and $\alpha\in(0,1]$ such that for any $ p\in \mathbb N$ and all $x,y\in[p]$,
\[|\phi(x)-\phi(y)|\leq
Cd_X(x,y)^\alpha.\]
Clearly, if $\phi$ is locally H\"older continuous then it is acceptable.
For each $n\geq1$ we write
$S_n\phi$ for the Birkhoff sum 
$\sum_{k=0}^{n-1}\phi\circ \sigma^k$,
and
 introduce a {\it pressure} 
\[P(\phi)=\lim_{n\to\infty}\frac{1}{n}\log\sum_{p_1\cdots  p_n\in \mathbb N^n}
\sup_{[p_1\cdots  p_n]}\exp S_n\phi. \]
This limit exists by the sub-additivity, 
which is never $-\infty$ since $X$ is the full shift.

 Let $\phi\colon X\to\mathbb R$
be acceptable and satisfy $P(\phi)<\infty$.
We consider a sequence $\{\tilde\mu_n\}_{n=1}^\infty$ of Borel probability measures on $\mathcal M(X)$ given by
\begin{equation}\label{nu_n}\tilde\mu_n=\frac{1}{Z_n(\phi)}\sum_{x\in E_n} \exp (S_n\phi(x ))\delta_{V_n(x) },\end{equation}
where 
\[E_n=\{x\in X\colon\sigma^n x=x\},\]
and $\delta_{V_n(x)}$ denotes the unit point mass at $V_n(x)$,
and $Z_n(\phi)$ the normalizing constant. 
In dynamical systems terms, $E_n$ is the set of periodic points of period $n$.
In statistical mechanics terms, the measure $\tilde\mu_n$ is closely related to the
canonical ensemble subject to a periodic boundary condition.

An {\it inducing scheme} consists of a subset $X^*$ of $X$ of the form
\begin{equation}\label{xstar}X^*=X\setminus\bigcup_{p\in \mathbb N\cap[1,p^*) } [p],\end{equation}
where $p^*\geq2$, and a function $R\colon X^*\to\mathbb N\cup\{\infty\}$ given by 
\begin{equation}\label{def-R}R(x)=\inf\{n\geq1\colon \sigma^nx\in X^*\}.\end{equation}
Given an inducing scheme $(X^*,R)$, 
for each $k\in\mathbb N$ we write \[\{R=k\}=\{x\in X^*\colon R(x)=k\},\] and
 define an induced map 
\begin{equation}\label{sigmahat}\tau\colon \bigcup_{k=1}^\infty\{R=k\}\mapsto \sigma^{
R(x)}x\in X^*,\end{equation}
 and define an inducing domain 
\begin{equation}\label{xhat} \Sigma=\bigcap_{n=0}^\infty \tau^{-n}\left(\bigcup_{k=1}^\infty\{R=k\}\right).\end{equation}
In other words, $\tau$ is the first return map to $X^*$ and $\Sigma$
 is the domain on which $\tau$ can be iterated infinitely many times.
 We call 
 $(\Sigma,\tau|_{\Sigma})$ an {\it induced system}.
 Given a potential $\phi\colon X\to\mathbb R$, we introduce
a parametrized family of {\it induced potentials} $\Phi_{\gamma}\colon \Sigma\to\mathbb R$ ($\gamma\in\mathbb R$) by 
 \begin{equation}\label{ind-po}\Phi_{\gamma}(x)=
 S_{R(x)}\phi(x)-\gamma R(x).\end{equation}
  As shown in Section~\ref{symbolic}, there exists
 a countably infinite partition that topologically conjugates
the induced system to 
 the countable full shift. The local H\"older continuity of the induced potential $\Phi_\gamma$ and its pressure $P(\Phi_\gamma)$ are well-defined in terms of this conjugacy.
 Our main result is stated as follows.


\begin{theorema}[the level-2 LDP]
  Let
$\phi\colon X\to\mathbb R$ be acceptable and satisfy $P(\phi)<\infty$. Suppose there exists
an induced system for which
the induced potentials 
$\Phi_{\gamma}$, $\gamma\in\mathbb R$ 
are locally H\"older continuous, 
and 
  there exists $\gamma_0\in\mathbb R$ such that $P(\Phi_{\gamma_0})=0$. 
 Then
  $\{\tilde\mu_n\}_{n=1}^\infty$ 
  is exponentially tight and
satisfies the LDP with the good rate function.
\end{theorema}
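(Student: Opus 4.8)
The plan is to split the argument into the two standard pieces — exponential tightness and the two LDP bounds — and to use the induced system to carry information about the non-compact "tails" of $\mathcal{M}(X)$.

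First I would set up the combinatorics of the inducing scheme. Since $\Phi_{\gamma_0}$ is locally H\"older continuous with $P(\Phi_{\gamma_0})=0$, the induced system (a countable full shift via the conjugacy of Section~\ref{symbolic}) carries a Bowen–Gibbs state, and one gets quantitative control of the form $\sum \exp(\sup \Phi_{\gamma_0}$ over first-return cylinders$)<\infty$ together with a tempered distortion estimate on the induced potential. The key point is to translate a periodic point $x\in E_n$ of the original shift into a concatenation of first-return words for $\tau$: a periodic orbit that enters $X^*$ decomposes as a cyclic concatenation of $\tau$-cylinders, and $S_n\phi(x)$ along that orbit equals the sum of the corresponding induced potential values shifted by $\gamma_0$ times the total return time, which is $n$ up to the portion of the orbit spent outside $X^*$. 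Orbits that never enter $X^*$ live entirely in the finite alphabet $\{1,\dots,p^*-1\}$ and contribute a compactly supported piece, so they are harmless.

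Second, exponential tightness. For $L>0$ I want a compact $\mathcal{K}\subset\mathcal{M}(X)$ with $\limsup \frac1n\log \tilde\mu_n(\mathcal{M}(X)\setminus\mathcal{K})\le -L$. I would take $\mathcal{K}$ of Prokhorov type: $\mathcal{K}=\{\nu:\nu([1,N_j]^c)$ small enough, controlled by a summable sequence$\}$, which is compact in the weak* topology on $\mathcal{M}(X)$ since $X$'s one-point-compactification-style tails are handled symbol by symbol. The empirical measure $V_n(x)$ assigns large mass to a far-out cylinder $[p]$, $p$ large, only if the orbit of $x$ spends a definite fraction of its time there; since visiting large symbols forces large contributions of $R$ on the induced side (a long excursion outside $X^*$, because $X^*$ omits only finitely many symbols — here one needs that long runs in high symbols push up $R$ or at least make $\Phi_{\gamma_0}$ very negative), the $\exp(S_n\phi)$-weight of such orbits is exponentially suppressed relative to $Z_n(\phi)$. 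Concretely I would bound the weighted count of periodic orbits whose empirical measure escapes $\mathcal{K}$ by a product over the excursions using the Gibbs bound for $\Phi_{\gamma_0}$, and compare with a lower bound $Z_n(\phi)\ge \exp((P(\phi)-\varepsilon)n)$ coming from a single fixed symbol's periodic orbits. This step — making "escaping mass" quantitatively cost exponential weight via the induced potential — is the one I expect to be the main obstacle, since it is exactly where the absence of a global Gibbs state bites and where the local Gibbs states of Section~\ref{ind-G} must do the work.

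Third, the LDP bounds themselves. Given exponential tightness, the upper bound \eqref{LDPup} only needs to be checked on compact sets, and there the usual argument goes through: cover a compact set of measures by weak* neighborhoods, use uniform continuity of $\phi$ (acceptability) to replace $S_n\phi(x)$ by $n\int\phi\,dV_n(x)$ up to $o(n)$, and bound $\frac1n\log Z_n(\phi)$ from below by $P(\phi)$ via the definition of pressure together with the fact that periodic-point sums realize the pressure for acceptable potentials on the full shift; this yields $\limsup \frac1n\log\tilde\mu_n(\mathcal{C})\le -\inf_{\mathcal C} I$ with $I(\nu)=P(\phi)-(h(\nu)+\int\phi\,d\nu)$ (extended by $+\infty$ off $\sigma$-invariant measures with $\int\phi\,d\nu>-\infty$). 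For the lower bound \eqref{LDPlow} on an open set $\mathcal{G}$, it suffices to treat a single invariant measure $\nu$ with $I(\nu)<\infty$ and produce enough periodic orbits with $V_n(x)$ close to $\nu$ and $S_n\phi(x)\ge (h(\nu)+\int\phi\,d\nu-\varepsilon)n$; I would obtain these by approximating $\nu$ by ergodic measures, then by $\tau$-invariant measures on the induced system that are Bowen–Gibbs for $\Phi_{\gamma_0}$ (using $P(\Phi_{\gamma_0})=0$), applying the Shannon–McMillan–Breiman and Birkhoff theorems there to get many induced periodic orbits with the right frequencies and Birkhoff sums, and finally concatenating and closing them up into genuine periodic orbits of $\sigma$ — Kac's formula relating the induced and original measures guarantees the period and the Birkhoff sum come out right. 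Finally, lower semicontinuity of $I$ and compactness of its sublevel sets (goodness) follow from upper semicontinuity of entropy on the relevant set of measures, which the acceptability of $\phi$ and the tightness estimate deliver; uniqueness of the rate function is automatic since $\mathcal{M}(X)$ is metrizable.
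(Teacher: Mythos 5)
Your overall architecture matches the paper's: a Bowen--Gibbs state for $\Phi_{\gamma_0}\circ\Pi$ on the induced full shift (this is exactly Proposition~\ref{cylinder-m}, giving a ``local Gibbs state''), an excursion decomposition of periodic orbits into first-return words to prove exponential tightness (Propositions~\ref{expo} and \ref{e-tight}), ergodic approximation plus Birkhoff/SMB and the closing of cylinders into periodic points for the lower bound (Proposition~\ref{lowlem}), and a variational-principle counting argument on compact sets for the upper bound (Lemma~\ref{horse}). Two remarks on route: the paper's lower bound works entirely on $X$ (every $n$-cylinder of the full shift contains a point of $E_n$), so your detour through induced periodic orbits and Kac's formula is unnecessary; and the mechanism that makes escapes expensive is not that long excursions force $\Phi_{\gamma_0}$ to be very negative, but simply that the induced Gibbs state is a probability measure, so the total Gibbs weight of induced words beginning with a symbol $>N_i$ can be made $\le\delta^{2i}$ by choosing $N_i$ large (condition \eqref{deltas}); the quasi-product property of the Gibbs weights then turns each escape into a factor $\delta$.

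There is, however, one genuine error. You identify the rate function as $I(\nu)=P(\phi)-\bigl(h(\nu)+\int\phi\,{\rm d}\nu\bigr)=-F_\phi(\nu)$ and assert that its lower semicontinuity and the goodness of its level sets ``follow from upper semicontinuity of entropy on the relevant set of measures.'' For the countable full shift the entropy is \emph{not} upper semicontinuous on $\mathcal M(X,\sigma)$, $-F_\phi$ is in general not lower semicontinuous, and hence cannot be the rate function; the paper explicitly takes the lower semicontinuous regularization \eqref{ratefcn} and warns that $I_\phi$ may differ from $-F_\phi$. This is not cosmetic: the upper bound your argument can actually deliver on a fundamental closed set $\mathcal C=\{\int\vec\varphi\,{\rm d}\mu\ge\vec\alpha\}$ is controlled by a measure lying only in an $\varepsilon$-enlargement of $\mathcal C$ (Proposition~\ref{uplem}), and without upper semicontinuity of entropy you cannot pass to the limit $\varepsilon\to0$ inside $\mathcal C$; one must instead infimize over open neighborhoods, which is precisely what produces the regularized $I_\phi$. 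Goodness then comes from exponential tightness together with the LDP, not from any semicontinuity of entropy. You should replace the claimed rate function by \eqref{ratefcn} and rework the passage from the $\varepsilon$-enlarged upper bound to the bound $-\inf_{\mathcal C}I_\phi$ via a finite cover of a compact $\mathcal C$ by fundamental closed sets contained in an arbitrary open $\mathcal G\supset\mathcal C$.
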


Let us define the rate function in Theorem~A.
 Let $\mathcal M(X,\sigma)$ denote the set of $\sigma$-invariant elements of $\mathcal M(X)$
and let $\mathcal M_\phi(X,\sigma)=\{
 \mu\in\mathcal M(X,\sigma)\colon\int\phi {\rm d}\mu>-\infty\}$.
  Define 
  $ F_\phi \colon \mathcal M(X) \to [-\infty,0]$ by
\[ F_\phi(\mu)
=
\begin{cases}h(\mu)+\int\phi {\rm d}\mu-P(\phi) &\text{ if $\mu\in\mathcal M_\phi(X,\sigma)$},\\
-\infty&\text{ otherwise,}\end{cases}\]
where
 $h(\mu)\in[0,\infty]$ denotes the measure-theoretic entropy of $\mu$ with respect to $\sigma$. 
 Since $X$ is the full shift, $\phi$ is acceptable and $P(\phi)<\infty$, 
 $\sup\phi$ is finite (see \cite[Proposition~2.1.9]{MauUrb03}). By \cite[Theorem~2.1.7]{MauUrb03}, for each $\mu\in \mathcal M_\phi(X,\sigma)$ we have 
 $h(\mu)+\int\phi {\rm d}\mu\leq P(\phi)<\infty$,
 and so $h(\mu)<\infty$. 
 The variational principle \cite[Theorem~2.1.8]{MauUrb03} holds:
\[P(\phi)=\sup\left\{ h(\mu)+\int\phi {\rm d}\mu\colon
\mu\in\mathcal M_\phi(X,\sigma)\right\},\]
 A measure $\mu\in \mathcal M_\phi(X,\sigma)$ which attains this supremum is called an {\it equilibrium state} for the potential $\phi$.
The rate function $I_\phi\colon\mathcal M(X)\to[0,\infty]$ 
in Theorem~A is given by
\begin{equation}\label{ratefcn}
I_\phi(\mu)
=
-\inf_{\mathcal G \ni \mu}\sup_{\mathcal G}F_\phi,
\end{equation}
where the infimum is taken over all open subsets $\mathcal G$ of $\mathcal M(X)$ containing $\mu$.
Since the entropy is not upper semicontinuous on $\mathcal M(X,\sigma)$,
$I_\phi$ may not equal $-F_\phi$.

 Since the sequence $\{\tilde\mu_n\}_{n=1}^\infty$ in Theorem~A is exponentially tight, it is tight. 
 By Prohorov's theorem, it has a limit point. 
 Since the rate function $I_\phi$ in Theorem~A is the good rate function,
there exists at least one minimizer.
If the minimizer is unique,
 we obtain a 
 `level-2 weighted equidistribution
of elements of $\bigcup_{n=1}^\infty E_n$ toward minimizers'.

\begin{theoremb}[level-2 weighted equidistribution]
Let
$\phi\colon X\to\mathbb R$ be acceptable and satisfy $P(\phi)<\infty$. Suppose there exists
an induced system for which
the induced potentials 
$\Phi_{\gamma}$, $\gamma\in\mathbb R$ 
are locally H\"older continuous, and 
 there exists $\gamma_0\in\mathbb R$ such that $P(\Phi_{\gamma_0})=0$. 
 Assume that the minimizer of the rate function $I_\phi$ is unique, denoted by $\mu_{\rm min}$. 
For any bounded continuous function $\tilde\varphi\colon \mathcal M(X)\to\mathbb R$ we have
\[\lim_{n\to\infty}\frac{1}{Z_{n}(\phi)}\sum_{x\in E_{n} }\exp (S_{n}\phi( x))\tilde\varphi(V_n(x) )=\tilde\varphi(\mu_{\rm min}).\]
\end{theoremb}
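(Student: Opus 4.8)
The key observation is that the sum on the left of Theorem~B is exactly $\int_{\mathcal M(X)}\tilde\varphi\,{\rm d}\tilde\mu_n$ for the probability measure $\tilde\mu_n$ defined in \eqref{nu_n}; so the assertion is precisely that $\tilde\mu_n\to\delta_{\mu_{\rm min}}$ in the weak* topology of $\mathcal M(\mathcal M(X))$, and it suffices to test against one fixed bounded continuous $\tilde\varphi$. The plan is to deduce this from Theorem~A by the usual principle that an LDP with a good rate function having a \emph{unique} zero forces the measures to collapse onto that zero.

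The one substantive ingredient is an elementary consequence of $I_\phi$ being a good rate function with $\{I_\phi=0\}=\{\mu_{\rm min}\}$: for every closed set $\mathcal C\subset\mathcal M(X)$ with $\mu_{\rm min}\notin\mathcal C$ one has $\inf_{\mathcal C}I_\phi>0$. I would argue by contradiction. If $\inf_{\mathcal C}I_\phi=0$, pick $\nu_j\in\mathcal C$ with $I_\phi(\nu_j)\to 0$; eventually $\nu_j$ lies in the compact level set $\{I_\phi\le1\}$, so some subsequence converges to a limit $\nu$, which belongs to $\mathcal C$ by closedness and satisfies $I_\phi(\nu)\le\liminf_jI_\phi(\nu_j)=0$ by lower semicontinuity. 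Hence $\nu=\mu_{\rm min}\in\mathcal C$, a contradiction. This is the only place where goodness of the rate function and uniqueness of the minimizer enter, and it uses only the abstract properties of $I_\phi$ supplied by Theorem~A (in particular it is irrelevant that $I_\phi$ may differ from $-F_\phi$).

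With this in hand, fix $\varepsilon>0$, set $M=\sup|\tilde\varphi|$, and let $\mathcal G=\{\nu\in\mathcal M(X)\colon|\tilde\varphi(\nu)-\tilde\varphi(\mu_{\rm min})|<\varepsilon\}$, an open neighbourhood of $\mu_{\rm min}$. Its complement $\mathcal C=\mathcal M(X)\setminus\mathcal G$ is closed and avoids $\mu_{\rm min}$, so by the step above $\ell:=\inf_{\mathcal C}I_\phi>0$, and the upper bound \eqref{LDPup} of the LDP in Theorem~A gives $\limsup_{n\to\infty}\frac1n\log\tilde\mu_n(\mathcal C)\le-\ell<0$, whence $\tilde\mu_n(\mathcal C)\to 0$. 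Since $\tilde\mu_n$ is a probability measure, splitting the integral over $\mathcal G$ and $\mathcal C$ yields
\[\Big|\int\tilde\varphi\,{\rm d}\tilde\mu_n-\tilde\varphi(\mu_{\rm min})\Big|\le\int_{\mathcal G}|\tilde\varphi-\tilde\varphi(\mu_{\rm min})|\,{\rm d}\tilde\mu_n+\int_{\mathcal C}|\tilde\varphi-\tilde\varphi(\mu_{\rm min})|\,{\rm d}\tilde\mu_n\le\varepsilon+2M\,\tilde\mu_n(\mathcal C),\]
so $\limsup_{n\to\infty}\big|\int\tilde\varphi\,{\rm d}\tilde\mu_n-\tilde\varphi(\mu_{\rm min})\big|\le\varepsilon$; letting $\varepsilon\downarrow 0$ gives the claim. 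An equivalent route would first use the exponential tightness from Theorem~A and Prohorov's theorem to make $\{\tilde\mu_n\}$ relatively compact in $\mathcal M(\mathcal M(X))$, then combine the portmanteau theorem with $\tilde\mu_n(\mathcal C)\to 0$ to see that every limit point assigns full mass to every open neighbourhood of $\mu_{\rm min}$, hence equals $\delta_{\mu_{\rm min}}$; but the direct estimate above is shorter. I do not expect a genuine obstacle here: all the work is already done in Theorem~A, and the only delicate line is the strict positivity $\inf_{\mathcal C}I_\phi>0$.
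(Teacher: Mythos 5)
Your proof is correct, and it is a genuinely different organization of the argument from the paper's. The paper proceeds by a compactness/subsequence argument: it takes an arbitrary weak* convergent subsequence $\tilde\mu_{n(j)}\to\tilde\mu$ (existence of such subsequences being furnished by exponential tightness and Prohorov's theorem), and then, for any $\mu\neq\mu_{\rm min}$, uses lower semicontinuity of $I_\phi$ and compactness of the level set $\{I_\phi\le I_\phi(\mu)/2\}$ to find a ball $\mathcal B(r;\mu)$ whose closure avoids that level set; the LDP upper bound on $\overline{\mathcal B(r;\mu)}$ then forces $\tilde\mu(\mathcal B(r;\mu))=0$, so the support of $\tilde\mu$ is $\{\mu_{\rm min}\}$. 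You instead isolate the abstract lemma that any closed set avoiding $\mu_{\rm min}$ has $\inf_{\mathcal C}I_\phi>0$ (a clean compactness-plus-lower-semicontinuity argument, equivalent in spirit to the paper's ball construction), apply the LDP upper bound once to $\mathcal C=\{\,|\tilde\varphi-\tilde\varphi(\mu_{\rm min})|\ge\varepsilon\,\}$, and finish with a direct $\varepsilon$-$2M\tilde\mu_n(\mathcal C)$ estimate. This bypasses the subsequence extraction and the implicit appeal to Prohorov entirely, making the chain of dependencies a bit shorter; the paper's version, on the other hand, yields the slightly stronger-looking intermediate statement ``$\tilde\mu_n\to\delta_{\mu_{\rm min}}$ in $\mathcal M(\mathcal M(X))$'' as the organizing principle (which, as you note, is equivalent once one tests against all bounded continuous $\tilde\varphi$). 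Both proofs use exactly the same inputs from Theorem~A — the LDP upper bound, the goodness of $I_\phi$, its lower semicontinuity, and uniqueness of the minimizer — so there is no gap; this is a matter of presentation, with your route being the more economical one.
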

Under the assumption of Theorem~A, minimizers are not always unique, and not always an equilibrium state.
A sufficient condition was given in \cite{Tak20} which ensures that minimizers are equilibrium states.

Taking various continuous functions $\tilde\varphi$ in Theorem~B, we obtain convergences of various time averages over the elements of $E_n$. 
Let $C(X)$ denote the set of real-valued bounded continuous functions on $X$.
\begin{corollary}[Inspired by Olsen {\cite[Section~1.1]{Ols03}}]
Under the assumption of Theorem~B, assume moreover the minimizer is unique, denoted by $\mu_{\rm min}$.
\begin{itemize}

\item[(a)] For all $\varphi,\psi\in C(X)$,
\[\begin{split}\lim_{n\to\infty}\frac{1}{Z_n(\phi)}\sum_{ x\in E_n }\exp (S_n\phi( x))\frac{1}{n^2}&S_n \varphi( x)S_n\psi(x)=\int \varphi {\rm d}\mu_{\rm min}\int \psi {\rm d}\mu_{\rm min}.\end{split}\]
   
     \item[(b)]
For $\varphi,\psi\in C(X)$
  with $\inf \psi>0$, 
\[\lim_{n\to\infty}\frac{1}{Z_n(\phi)}\sum_{x\in E_n }\exp(S_n\phi(x))\frac{S_n \varphi( x)}{ S_n\psi(x)}=\frac{\int \varphi {\rm d}\mu_{\rm min} }{\int \psi {\rm d}\mu_{\rm min} }.\]

\item[(c)] 
For $\pi_1,\pi_2\in C(X)$ and a bounded continuous function $f\colon\mathbb R\to\mathbb R$, 
\[\begin{split}\lim_{n\to\infty}\frac{1}{Z_n(\phi)}\sum_{ x\in E_n }\exp(S_n\phi( x))\frac{1}{n^2}&\sum_{k_1,k_2=0}^{n-1} f(\pi_1(\sigma^{k_1} x)+\pi_2(\sigma^{k_2} x))\\
&=\int f {\rm d}(\mu_{\rm min}\circ \pi_1^{-1}\otimes\mu_{\rm min}\circ \pi_2^{-1}),\end{split}\]
where $\otimes$ denotes the convolution.
\end{itemize}  
\end{corollary}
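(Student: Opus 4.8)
The plan is to obtain all three identities as direct consequences of Theorem~B. For $g\in C(X)$ and $x\in X$ one has the elementary identity $\frac1n S_ng(x)=\int g\,{\rm d}V_n(x)$, which turns Birkhoff sums along the periodic orbit of $x$ into integrals against the empirical measure $V_n(x)$. So for each of (a)--(c) I would produce a bounded continuous functional $\tilde\varphi\colon\mathcal M(X)\to\mathbb R$ for which the quantity appearing inside the weighted sum equals $\tilde\varphi(V_n(x))$, and then invoke Theorem~B to conclude that the weighted average converges to $\tilde\varphi(\mu_{\rm min})$.

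For (a) I would take $\tilde\varphi(\nu)=\bigl(\int\varphi\,{\rm d}\nu\bigr)\bigl(\int\psi\,{\rm d}\nu\bigr)$. Each evaluation $\nu\mapsto\int\varphi\,{\rm d}\nu$ is continuous for the weak* topology because $\varphi\in C(X)$, so $\tilde\varphi$ is continuous, while $|\tilde\varphi(\nu)|\le(\sup|\varphi|)(\sup|\psi|)$ shows it is bounded; since $\frac1{n^2}S_n\varphi(x)S_n\psi(x)=\tilde\varphi(V_n(x))$, Theorem~B gives $\tilde\varphi(\mu_{\rm min})=\int\varphi\,{\rm d}\mu_{\rm min}\int\psi\,{\rm d}\mu_{\rm min}$. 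For (b) I would take $\tilde\varphi(\nu)=\bigl(\int\varphi\,{\rm d}\nu\bigr)/\bigl(\int\psi\,{\rm d}\nu\bigr)$; here the hypothesis $\inf\psi>0$ is used to guarantee $\int\psi\,{\rm d}\nu\ge\inf\psi>0$ for every $\nu\in\mathcal M(X)$, so that $\tilde\varphi$ is well defined, continuous, and bounded by $\sup|\varphi|/\inf\psi$. Since $\inf\psi>0$ also forces $S_n\psi(x)>0$ and $S_n\varphi(x)/S_n\psi(x)=\tilde\varphi(V_n(x))$, Theorem~B again applies.

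For (c) I would first rewrite the double sum as an integral against a product measure,
\[\frac1{n^2}\sum_{k_1,k_2=0}^{n-1}f\bigl(\pi_1(\sigma^{k_1}x)+\pi_2(\sigma^{k_2}x)\bigr)=\int_{X\times X}f\bigl(\pi_1(y)+\pi_2(z)\bigr)\,{\rm d}\bigl(V_n(x)\times V_n(x)\bigr)(y,z),\]
and observe that, pushing forward under $(y,z)\mapsto(\pi_1(y),\pi_2(z))$ and using the definition of the convolution, this equals $\int f\,{\rm d}\bigl(V_n(x)\circ\pi_1^{-1}\otimes V_n(x)\circ\pi_2^{-1}\bigr)$. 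Accordingly I would set $\tilde\varphi(\nu)=\int_{X\times X}f(\pi_1(y)+\pi_2(z))\,{\rm d}(\nu\times\nu)(y,z)$. The integrand $(y,z)\mapsto f(\pi_1(y)+\pi_2(z))$ is bounded and continuous on $X\times X$ because $f,\pi_1,\pi_2$ are, and, $X$ being a Polish space, the diagonal product map $\nu\mapsto\nu\times\nu$ is continuous from $\mathcal M(X)$ to $\mathcal M(X\times X)$ for the weak* topologies; hence $\tilde\varphi$ is bounded and continuous, and Theorem~B yields $\int f\,{\rm d}(\mu_{\rm min}\circ\pi_1^{-1}\otimes\mu_{\rm min}\circ\pi_2^{-1})$.

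The whole argument is formal once Theorem~B is in hand, and I do not expect a real obstacle. The only points that need a little care are the verifications that each $\tilde\varphi$ is genuinely bounded and weak*-continuous on the \emph{non-compact} space $\mathcal M(X)$: boundedness is where the hypotheses on $\varphi,\psi,f$ --- and the positivity $\inf\psi>0$ in (b) --- come in, and in (c) the one structural input is the continuity of $\nu\mapsto\nu\times\nu$. If I had to single out a step to treat carefully it would be this last one, together with keeping track of the non-compactness of $\mathcal M(X)$, which is precisely why boundedness of $\tilde\varphi$ on all of $\mathcal M(X)$, not merely on a compact subset, must be checked.
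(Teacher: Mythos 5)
Your proof is correct and is essentially the same as the paper's: the paper simply cites the three bounded continuous functionals $\mu\mapsto\int\varphi\,{\rm d}\mu\int\psi\,{\rm d}\mu$, $\mu\mapsto\int\varphi\,{\rm d}\mu/\int\psi\,{\rm d}\mu$, $\mu\mapsto\int f\,{\rm d}(\mu\circ\pi_1^{-1}\otimes\mu\circ\pi_2^{-1})$ and applies Theorem~B. You have merely spelled out the verifications (boundedness, weak* continuity, the identity $\tfrac1n S_n g=\int g\,{\rm d}V_n$, and the product/push-forward rewriting in (c)) that the paper leaves implicit.
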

\begin{proof}
Apply Theorem~B to the bounded continuous functions $\mu\in\mathcal M(X)\mapsto\int \varphi {\rm d}\mu\int\psi {\rm d}\mu$, $\mu\in\mathcal M(X)\mapsto\int \varphi {\rm d}\mu/\int\psi {\rm d}\mu$,  
 $\mu\in\mathcal M(X)\mapsto\int f {\rm d}(\mu\circ\pi_1^{-1}\otimes\mu\circ\pi_2^{-1})$
 respectively.
\end{proof}

  \begin{figure}
\begin{center}
\includegraphics[height=4cm,width=5.5cm]{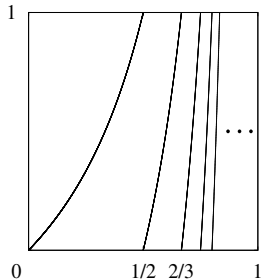}
\caption
{The graph of the R\'enyi map $T$.}\label{fig1}
\end{center}
\end{figure}


\subsection{Applications}\label{appl-intro}
Our results can be applied to dynamical systems modeled by the countable full shift without Bowen's Gibbs state. 
The assumption in Theorem~A can be verified, for example, for the infinite Manneville-Pomeau map \cite[Section~2.2]{Iom10},
and the two-dimensional conformal maps in \cite[Section~5]{Yur05} related to number theory. 
 Minimizers
of the associated rate functions are not unique, and so Theorem~B does not apply.
Further applications of different taste will be given in our forthcoming paper.

A prime
example to which our results apply
is the R\'enyi map
 $T\colon [0,1)\to [0,1)$ given by 
 \begin{equation}\label{renyi}
 T(\xi)=\frac{1}{1-\xi}-\left\lfloor \frac{1}{1-\xi}\right\rfloor,\end{equation}
     where $\lfloor \cdot\rfloor$ denotes the floor function.
      The graph of $T$ is obtained by reversing the graph of the well-known Gauss map $\xi\in(0,1]\to 1/\xi-\lfloor 1/\xi\rfloor\in[0,1)$ around the axis $\{\xi=1/2\}$, as shown in \textsc{Figure}~\ref{fig1}.
  The map $T$ 
 leaves invariant the absolutely continuous infinite measure ${\rm d}x/x$,
 and $x=0$ is its neutral fixed point: $T(0)=0$, $T'(0)=1$.
   The asymptotic distribution of typical orbits, in the Lebesgue measure sense, are concentrated on this neutral fixed point.

 The iteration of $T$ generates an infinite continued fraction expansion of each number  $\xi\in[0,1)$ of the form
\begin{equation}\label{CF}\xi=1-\cfrac{1 }{d_1(\xi)-\cfrac{1 }{d_2(\xi)-\ddots}},\end{equation}
where $d_n(\xi)=\lfloor 1/(1-T^{n-1}(\xi))\rfloor+1\geq2$ for $n\geq1$.
  The intervals
 \[J_p=\left[1-\frac{1}{p},1-\frac{1}{p+1}\right),\ p\in\mathbb N\]
form an infinite Markov partition of $[0,1)$, which allows us to represent $T$ as the left shift acting on $X$ \cite{Iom10,Tak22}.
 The coding map
 \begin{equation}\label{pi}\pi\colon (x_n)_{n=1}^\infty\in X\mapsto \pi((x_n)_{n=1 }^\infty)\in\bigcap_{n=1 }^\infty\overline{T^{-n+1}(J(x_{n}))}\subset[0,1)\end{equation}
 is well-defined 
and satisfies $T\circ \pi=\pi\circ\sigma$.
  We consider the potential
$\phi=-\log|T'\circ\pi|$, where $T'$ denotes the derivative of $T$ which is one-sided at boundary points of the Markov partition. From the mean value theorem applied to the inverse branches of $T$,
for any $p\geq1$ and all $\xi,\eta\in J_p$ we have
\[\log\frac{|T'(\xi)|}{|T'(\eta)|}\leq2|T(\xi)-T(\eta)|< 2.\]
In particular, $\phi$ is acceptable. Since
$\sup_{[p]}e^{\phi}$ is comparable to $p^{-2}$,
$P(\beta\phi)<\infty$ holds if and only if
 $\sum_{p\in\mathbb N} p^{-2\beta}$ is finite, which is equivalent to $\beta>1/2$.
It is easy to see that
for $n\geq1$,
 $T^{n}$ maps $[0,1/(n+1))$ diffeomorphically onto $[0,1)$. 
The mean value theorem implies $\lim_{n\to\infty}\sup_{[0,1/(n+1))} |(T^n)'|=\infty$, while
$|(T^n)'(0)|=1$ for $n\geq1$.
It follows that 
for any $\beta>1/2$
 there is no Bowen's Gibbs state for the potential $\beta\phi$.
Meanwhile, it is known \cite{Iom10} that for $\beta>1/2$,
the equilibrium state for $\beta\phi$ is unique, which we
denote by $\mu_{\beta\phi}$. 
 For $1/2<\beta<1$, $\mu_{\beta\phi}$ has positive entropy
and fully supported. For $\beta\geq1$, $\mu_{\beta\phi}$ is the unit point mass at $\pi^{-1}(0)$. 

 Let $\mathbb I$ denote the set of irrational numbers in $(0,1)$.
The set $E_n$ corresponds to
 the set of numbers in $\mathbb I\cup\{0\}$ for which the continued fraction 
  \eqref{CF} is periodic of period $n$.
 As in the proof of \cite[Theorem~28]{Khi64}, one can show that any number in  $\bigcup_{n=1}^\infty\{\xi\in\mathbb I\colon T^n(\xi)=\xi\}$ is a quadratic irrational, i.e., an irrational root of a quadratic polynomial
 with integer coefficients.
 Conversely, any quadratic irrational in $\mathbb I$ has an eventually 
 periodic continued fraction of the form \eqref{CF}, see \cite[Theorem~3]{Nor11}.
 
 An induced system as in Theorem~A is obtained from the first return map to the interval $[1/2,1)$ not containing the neutral fixed point.
From Theorems~A and B
we obtain the following.
For $\xi\in[0,1)$ and $n\geq1$, let $V_n(\xi)$ denote the empirical measure $(1/n)\sum_{k=0}^{n-1}\delta_{T^k(\xi)}$ on $[0,1)$.

\begin{theoremc}
For any $\beta\in(1/2,1]$,
the sequence of Borel probability measures on $\mathcal M(\pi(X))$ given by
\[\frac{1}{Z_n(\beta\phi)}
\sum_{\stackrel{\xi\in\mathbb I\cup\{0\}}{T^n(\xi)=\xi} } |(T^n)'(\xi)|^{-\beta}\delta_{V_n(\xi) }\ \text{ for }n=1,2,\ldots\]
satisfies the LDP. The minimizer is unique and it is the unit point mass at $\mu_{\beta\phi}\circ\pi^{-1}$. Moreover,
for any bounded continuous function $\tilde\varphi\colon\mathcal M(\pi(X))\to\mathbb R$ we have
\[\lim_{n\to\infty}\frac{1}{Z_n(\beta\phi) }
\sum_{\stackrel{\xi\in\mathbb I\cup\{0\} }{T^n(\xi)=\xi} }|(T^n)'(\xi)|^{-\beta}\tilde\varphi(V_n(\xi))=\tilde\varphi(\mu_{\beta\phi}\circ\pi^{-1}).\]
\end{theoremc}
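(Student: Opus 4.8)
The plan is to apply Theorems~A and~B to the countable full shift $(X,\sigma)$ with the potential $\beta\phi$, where $\phi=-\log|T'\circ\pi|$, and then push the conclusion forward to $[0,1)=\pi(X)$ along the coding map. The dictionary between the two pictures is already contained in the excerpt: $\pi$ is a continuous surjection onto $[0,1)$ with $T\circ\pi=\pi\circ\sigma$, so the pushforward $\pi_*\colon\mathcal M(X)\to\mathcal M(\pi(X))$ is continuous for the weak* topologies and $\pi_*V_n(x)=V_n(\pi x)$; moreover $S_n(\beta\phi)(x)=-\beta\log|(T^n)'(\pi x)|$, whence $\exp(S_n(\beta\phi)(x))=|(T^n)'(\pi x)|^{-\beta}$, and $x\mapsto\pi x$ maps $E_n$ bijectively onto $\{\xi\in\mathbb I\cup\{0\}\colon T^n\xi=\xi\}$, preserving these weights. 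Hence, writing $\{\tilde\mu_n\}$ for the sequence \eqref{nu_n} associated with $\beta\phi$, the pushforward $(\pi_*)_*\tilde\mu_n$ is precisely the sequence of measures in the statement and the two normalizing constants $Z_n(\beta\phi)$ coincide. Since the contraction principle (\cite{DemZei98}) transports the LDP with a good rate function through continuous maps, it suffices to verify the hypotheses of Theorems~A and~B for $(X,\sigma,\beta\phi)$ and to identify the minimizer of $I_{\beta\phi}$.

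The assumptions on the potential are recorded in the excerpt: $\beta\phi$ is acceptable and $P(\beta\phi)<\infty$ exactly for $\beta>1/2$. For the induced system I take $p^*=2$ in \eqref{xstar}, so that $X^*=X\setminus[1]$ codes the interval $[1/2,1)=\bigcup_{p\ge2}J_p$, which is bounded away from the neutral fixed point $0\in J_1$, and $(\Sigma,\tau)$ is conjugate --- via the partition of Section~\ref{symbolic} --- to the first return map of $T$ to $[1/2,1)$. Because $[1/2,1)$ avoids $0$, this return map is uniformly expanding (one full return already expands by a factor $\ge4$) with uniformly bounded distortion; the distortion bound is standard for interval maps with an indifferent fixed point (cf.~\cite{Iom10,Tak22}) and rests on the inequality $\log(|T'(\xi)|/|T'(\eta)|)\le 2|T\xi-T\eta|$ from the excerpt. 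Since $R$ is constant on each induced $1$-cylinder, bounded distortion bounds the variation of $\Phi_\gamma=S_R(\beta\phi)-\gamma R=-\beta\log|(T^R)'\circ\pi|-\gamma R$ over an induced $n$-cylinder by a constant multiple of the diameter of its image under $\tau$ (an induced $(n-1)$-cylinder), which is uniformly exponentially small in $n$; hence $\Phi_\gamma$ is locally H\"older continuous for every $\gamma\in\mathbb R$. For a zero of the induced pressure I split into the two sub-ranges of $\beta$. If $1/2<\beta<1$, the equilibrium state $\mu_{\beta\phi}$ (unique by \cite{Iom10}) has full support, hence gives positive mass to $[1/2,1)$; inducing it and normalizing by Kac's formula yields a $\tau$-invariant probability measure that is an equilibrium state for $\Phi_{\gamma_0}$ with $\gamma_0=P(\beta\phi)$, which forces $P(\Phi_{\gamma_0})=0$. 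If $\beta=1$, then $P(\beta\phi)=0$ --- the equilibrium state $\delta_{\pi^{-1}(0)}$ of \cite{Iom10} has zero entropy and $\phi$ vanishes at $\pi^{-1}(0)$ --- and with $\gamma_0=0$ one checks $P(\Phi_0)=0$ directly: bounded distortion makes $\sup_C|(T^R)'\circ\pi|^{-1}$ comparable to the length of $\pi(C)$ for each induced $n$-cylinder $C$, and since these cylinders cover $[1/2,1)$ up to a null set the partition functions remain bounded in $n$.

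Thus the hypotheses of Theorem~A hold, and $\{\tilde\mu_n\}$ is exponentially tight and satisfies the LDP with the good rate function $I_{\beta\phi}$. The minimizer of $I_{\beta\phi}$ is moreover unique: the sufficient condition of \cite{Tak20}, which is met here, guarantees that every minimizer of $I_{\beta\phi}$ is an equilibrium state for $\beta\phi$, and by \cite{Iom10} there is only one such state, namely $\mu_{\beta\phi}$; so the minimizer equals $\mu_{\beta\phi}$. Theorem~B applies, and by the contraction principle $(\pi_*)_*\tilde\mu_n$ --- i.e.\ the sequence in the statement --- satisfies the LDP with the good rate function $\nu\mapsto\inf\{I_{\beta\phi}(\mu)\colon\pi_*\mu=\nu\}$, whose unique minimizer is $\pi_*\mu_{\beta\phi}=\mu_{\beta\phi}\circ\pi^{-1}$ (equivalently, the sequence concentrates weakly on the unit point mass at $\mu_{\beta\phi}\circ\pi^{-1}$). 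Finally, given a bounded continuous $\tilde\varphi\colon\mathcal M(\pi(X))\to\mathbb R$, the function $\tilde\varphi\circ\pi_*\colon\mathcal M(X)\to\mathbb R$ is bounded continuous, so Theorem~B (with $\mu_{\rm min}=\mu_{\beta\phi}$) gives
\[\lim_{n\to\infty}\frac{1}{Z_n(\beta\phi)}\sum_{x\in E_n}\exp(S_n(\beta\phi)(x))\,(\tilde\varphi\circ\pi_*)(V_n(x))=(\tilde\varphi\circ\pi_*)(\mu_{\beta\phi})=\tilde\varphi(\mu_{\beta\phi}\circ\pi^{-1}),\]
and rewriting the left-hand side by $\pi_*V_n(x)=V_n(\pi x)$, $\exp(S_n(\beta\phi)(x))=|(T^n)'(\pi x)|^{-\beta}$ and the bijection $x\mapsto\pi x$ of $E_n$ onto $\{\xi\in\mathbb I\cup\{0\}\colon T^n\xi=\xi\}$ yields the asserted limit.

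The step I expect to be the main obstacle is the existence of $\gamma_0$ with $P(\Phi_{\gamma_0})=0$. Since $T$ is only neutrally hyperbolic at $0$, long excursions near $0$ force $P(\Phi_\gamma)=+\infty$ for $\gamma$ below a critical value $\gamma_c\ge0$, and one must show that $P(\Phi_{\gamma_0})$ does not overshoot $0$; this uses the precise (quadratic) tangency of $T$ at $0$ and is exactly what fails when $\beta>1$ --- there $P(\Phi_\gamma)$ drops from $+\infty$ straight to a strictly negative value and never attains $0$ --- which is why the range $(1/2,1]$ is the natural one, with the borderline case $\beta=1$ needing the separate direct computation above. The accompanying bounded-distortion estimate for the first return map, and the verification of the hypothesis of \cite{Tak20}, are the remaining technical points; the symbolic-to-geometric transfer and the input from \cite{Iom10} are routine.
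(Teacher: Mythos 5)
Your proposal follows essentially the same route as the paper: set up the $p^*=2$ inducing scheme coding the first-return map of $T$ to $[1/2,1)$, verify local H\"older continuity of the induced potentials $\Phi_{\beta,\gamma}$ via bounded distortion, verify that some $\gamma_0$ makes $P(\Phi_{\beta,\gamma_0})=0$, invoke Theorem~A, combine \cite{Tak20} with uniqueness of the equilibrium state from \cite{Iom10} to pin down the unique minimizer, apply Theorem~B, and transport everything to $\mathcal M(\pi(X))$ along the continuous map $\pi$. The dictionary you set up between $(X,\sigma,\beta\phi)$ and $(T,|(T^n)'|^{-\beta})$ — matching of weights, of $E_n$ with the periodic points of $T$ in $\mathbb I\cup\{0\}$, and of the normalizing constants — is exactly what the paper uses, as is your $\beta=1$ computation of $P(\Phi_{1,0})=0$.

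The one genuine gap is the step you yourself flag as the main obstacle. For $1/2<\beta<1$ you claim that inducing $\mu_{\beta\phi}$ and normalizing by Kac's formula produces an equilibrium state for $\Phi_{\beta,\gamma_0}$ with $\gamma_0=P(\beta\phi)$, and that this ``forces $P(\Phi_{\beta,\gamma_0})=0$.'' But the Abramov--Kac computation only shows that $h(\hat\mu_{\beta\phi})+\int\Phi_{\beta,\gamma_0}\,{\rm d}\hat\mu_{\beta\phi}=0$, which gives the inequality $P(\Phi_{\beta,\gamma_0})\geq 0$. To promote this to equality you would have to bound the induced pressure \emph{from above} by $0$, and the Kac correspondence only controls $\tau$-invariant probabilities $\hat\nu$ with $\int R\,{\rm d}\hat\nu<\infty$; measures with infinite mean return time are not ruled out by this argument. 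The paper deliberately avoids this: its Lemma~\ref{pr} proves only $0\leq P(\Phi_{\beta,P(\beta\phi)})<\infty$ (the upper bound coming from a direct estimate of the partition sums, not from Kac), while the statement $P(\Phi_{\beta,P(\beta\phi)})=0$ is relegated to a footnote as a stronger fact not needed for the proof. The actual zero $\gamma_0$ is then produced by the abstract Lemma~\ref{P-zero}: $\gamma\mapsto P(\Phi_\gamma)$ is strictly decreasing and continuous where finite, so if $P(\Phi_{\beta,P(\beta\phi)})>0$ some $\gamma_0>P(\beta\phi)$ gives exactly $0$. Adding this monotonicity-and-continuity step would close the gap in your argument.
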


\subsection{Structure of the paper}
The rest of this paper consists of two sections.
In Section~2 we verify the exponential tightness
of the sequence in \eqref{nu_n} under the assumption of Theorem~A.  In Section~3 we complete proofs of all the theorems.
We close with a remark on 
possible generalizations of the main results.

\section{Exponential tightness}
The aim of this section
is to verify the exponential tightness of the sequence in \eqref{nu_n}.
In Section~\ref{symbolic}
we start with a symbolic representation of the induced system. 
In Section~\ref{ind-G} we introduce the notion of local Gibbs states together with examples.
In Section~\ref{tight-section}
 we prove a main technical estimate
 assuming the existence of a local Gibbs state. 
Using this estimate, we verify the exponential tightness in Section~\ref{dedic}.
In Section~\ref{existence} we show that the assumption of Theorem~A implies the existence of a local Gibbs state.

\subsection{Symbolic representation of the induced system}\label{symbolic}
For a set $\mathbb S$ and an integer $j\geq1$,
let $\mathbb S^j$ denote the set of words of elements of $\mathbb S$ of word length $j$. We introduce an empty word $\emptyset$ and set $\mathbb S^0=\{\emptyset\}$,
 $a\emptyset=a=\emptyset a$, $a\emptyset b=ab$ for $a,b\in \mathbb S$. We set 
$W(\mathbb S)=\bigcup_{j=1}^\infty \mathbb S^j$,
$\mathbb N_0=\mathbb N\cup\{0\}$ and
$W_0(\mathbb S)=W(\mathbb S)\cup \mathbb S^0.$

Let $(X^*,R)$ be an inducing scheme. Set 
\[\mathbb N^*=\mathbb N\cap[p^*,\infty) 
\ \text{ and }\ \mathbb N_{*}=\mathbb N\cap[1,p^*).\]
For each $p\in \mathbb N^*$ and $\omega\in W_0(\mathbb N_*)$, the set $\bigcup_{q\in \mathbb N^* }[p \omega q]$ 
is mapped by the induced map  $\tau$ in \eqref{sigmahat}
bijectively onto $X^*$. Since the domain
$\bigcup_{k=1}^\infty\{R=k\}$ of definition of $\tau$ is partitioned into countably infinite sets of this form, the induced system $\tau|_{\Sigma}$ is represented as the countable full shift over the infinite alphabet
\begin{equation}\label{ahat}\mathbb A=\left\{\bigcup_{q\in \mathbb N^* }[p \omega q]\colon p\in \mathbb N^*\text{ and }\omega\in W_0(\mathbb N_*)\right\}.\end{equation}

To make the above statement into a rigorous one, we endow $\mathbb A$ with the discrete topology, and consider the countable full shift
\[{\mathbb A}^{\mathbb N}=\{z=(z_n)_{n=1}^\infty\colon z_n\in\mathbb A\text{ for   }n\geq1\}.\]
   We will often use  
 bold letters
to denote elements of $W(\mathbb A)$, and  a double square bracket $\bra\cdot\ket$ to denote cylinders in $\mathbb A^\mathbb N$:
the $n$-cylinder $(n\geq1)$ spanned by
 $\bold a=a_1\cdots a_n\in\mathbb A^n$ is 
 \[\bra\bold a\ket
 =\{z=(z_k)_{k=1}^\infty\in \mathbb A^\mathbb N\colon z_k=a_k\text{ for }
 k\in\{1,\ldots, n\}\}.\]
 By definition,
for each $k\in\{1,\ldots,n\}$ we have
    $a_k=\bigcup_{q\in \mathbb N^*}[p_k \omega^{(j_k)} q]$
where $p_k\in\mathbb N^*$, $j_k\in\mathbb N_0$, $\omega^{(j_k)}\in\mathbb N_{*}^{j_k}$. 
        Let $|\bold a|$ denote the word length of 
       the word $p_1\omega^{(j_1)}p_2\omega^{(j_2)}\cdots p_n\omega^{(j_n)}$ in $W(\mathbb N)$:
\[|\bold a|=n+\sum_{k=1}^nj_k.\]
Let $[\bold a]$ denote the corresponding $|\bold a|$-cylinder in $X$:
\[[\bold a]=[p_1\omega^{(j_1)}p_2\omega^{(j_2)}\cdots p_n\omega^{(j_n)}]\subset X.\]
It is easy to check that a coding map
$\Pi\colon\mathbb A^{\mathbb N}\to\Sigma$ is defined by
    \begin{equation}\label{codingmap}\Pi\colon (z_n)_{n=1}^\infty\in\mathbb A^{\mathbb N}\mapsto\Pi( (z_n)_{n=1}^\infty)\in\bigcap_{n=1}^\infty[ z_1\cdots z_n]\subset\Sigma,\end{equation}
    and becomes a homeomorphism.
    Let $\theta$ denote the left shift acting on ${\mathbb A}^{\mathbb N}$. Clearly we have
    $\Pi\circ\theta=\tau|_{\Sigma}\circ\Pi$, $\Pi(\bra\bold a\ket)=[\bold a]$ for any $\bold a\in W(\mathbb A)$.
    Let $d_{\Sigma}$ denote the metric on $\Sigma$ 
given by $d_\Sigma(x,y)=d_{\mathbb A^\mathbb N}(\Pi^{-1}(x),\Pi^{-1}(y))$.

For $\bold a\in\mathbb A^n$ as above
and $q\in\mathbb N^*$, we will often write \[\bold a q=p_1\omega^{(j_1)}p_2\omega^{(j_2)}\cdots p_n\omega^{(j_n)}q,\] 
 and interpret $\bold a q$ as a word in $W(\mathbb N)$ of word length $|\bold a|+1$.

\begin{lemma}\label{disjoint}
Let $(\Sigma,\tau|_{\Sigma})$ be an induced system and let $\Pi$ be the coding map in \eqref{codingmap}.
\begin{itemize}
\item[(a)] For every $\bold a\in W(\mathbb A)$,
\[\Pi[\![\bold a]\!]=
\Sigma\cap\bigcup_{q\in\mathbb N^*}[\bold a q].\]

\item[(b)] If $\bold a$, $\bold b\in W(\mathbb A)$ satisfy $|\bold a|=
|\bold b|$, then 
$\bold a=\bold b$ or 
$\bra\bold a\ket\cap\bra \bold b\ket=\emptyset.$
\end{itemize}
\end{lemma}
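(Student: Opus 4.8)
The plan is to unwind the definitions of the coding map $\Pi$, the cylinders $\bra \bold a \ket \subset \mathbb{A}^\mathbb{N}$ and $[\bold a] \subset X$, and the inducing structure, and then argue essentially combinatorially.

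For part (a), I would first observe that by construction $\Pi(\bra \bold a\ket) = [\bold a] \cap \Sigma$ is noted right after \eqref{codingmap}, so the content is to identify $[\bold a]\cap\Sigma$ with $\Sigma \cap \bigcup_{q \in \mathbb{N}^*}[\bold a q]$. The inclusion $\supseteq$ is immediate since $[\bold a q] \subseteq [\bold a]$ for every $q$. For $\subseteq$, take $x \in [\bold a] \cap \Sigma$; writing $\bold a = a_1\cdots a_n$ with $a_k = \bigcup_{q\in\mathbb{N}^*}[p_k\omega^{(j_k)}q]$, the point $x$ lies in $[p_1\omega^{(j_1)}\cdots p_n\omega^{(j_n)}]$, so its first $|\bold a|$ symbols are prescribed and $\sigma^{|\bold a|}x$ is the point obtained after applying $\tau$ exactly $n$ times. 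Since $x \in \Sigma$, by \eqref{xhat} the point $\sigma^{|\bold a|}x = \tau^n(x)$ lies in $X^*$, which by \eqref{xstar} means its first symbol $x_{|\bold a|+1}$ lies in $\mathbb{N}^* = \mathbb{N}\cap[p^*,\infty)$; set $q = x_{|\bold a|+1} \in \mathbb{N}^*$, and then $x \in [\bold a q]$. Here the one subtlety to check carefully is that the word-length bookkeeping $|\bold a| = n + \sum_k j_k$ correctly matches the number of shift iterations accumulated by applying $\tau$ to the symbols $a_1, \ldots, a_n$; this follows because each $a_k$ corresponds to a block of $R$-value $1 + j_k$ under the first-return map, which is exactly how $\mathbb{A}$ was defined in \eqref{ahat}.

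For part (b), suppose $\bold a = a_1\cdots a_m$, $\bold b = b_1\cdots b_n \in W(\mathbb{A})$ with $|\bold a| = |\bold b|$ and $\bra\bold a\ket \cap \bra\bold b\ket \neq \emptyset$; I want to conclude $\bold a = \bold b$. Pick $z \in \bra\bold a\ket\cap\bra\bold b\ket$; then $a_k = z_k = b_k$ for all $k$ up to $\min\{m,n\}$, so one of $\bold a, \bold b$ is a prefix of the other; say $\bold a$ is a prefix of $\bold b$, i.e. $m \le n$ and $a_k = b_k$ for $k \le m$. Now compare word lengths in $W(\mathbb{N})$: writing $a_k = \bigcup_q[p_k\omega^{(j_k)}q]$, we have $|\bold a| = m + \sum_{k=1}^m j_k$ while $|\bold b| = n + \sum_{k=1}^n j_k$, and since $a_k = b_k$ forces equal $j_k$ for $k \le m$, subtracting gives $0 = |\bold b| - |\bold a| = (n-m) + \sum_{k=m+1}^n j_k$. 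As $n - m \ge 0$ and each $j_k \ge 0$, this forces $n = m$, whence $\bold a = \bold b$.

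The routine part is the definition-chasing in (a); the one place to be careful — and what I'd call the main obstacle, though it is minor — is keeping the two different length conventions straight: the word length $n$ of $\bold a$ as an element of $\mathbb{A}^n$ versus the length $|\bold a| = n + \sum_k j_k$ of the associated word over $\mathbb{N}$. Once that correspondence is pinned down, both parts are short. Part (b) additionally relies implicitly on the fact that an element of $\mathbb{A}$ is \emph{not} determined by fewer than its full data $(p,\omega)$, but since we are comparing words of equal $\mathbb{N}$-length and matching symbol-by-symbol, this causes no trouble.
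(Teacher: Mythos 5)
Your part (b) is correct and uses a clean, self-contained argument entirely inside $\mathbb A^{\mathbb N}$ (prefix comparison plus the word-length identity $|\bold a|=m+\sum_{k\le m}j_k$). The paper instead deduces (b) from (a), via the injectivity of $\bold a\mapsto[\bold a]$ and the disjointness of distinct cylinders of the same length in $X$. Both routes are fine; yours is arguably more elementary since it does not rely on (a).

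Your part (a), however, has a real gap. The starting assertion $\Pi\bra\bold a\ket=[\bold a]\cap\Sigma$ is not what the paper establishes (the line after \eqref{codingmap} only gives $\Pi\bra\bold a\ket\subset[\bold a]$; the equality with $\Sigma\cap\bigcup_q[\bold a q]$ is exactly what Lemma~\ref{disjoint}(a) is for), and in fact it is false in general: $\Pi\bra\bold a\ket$ is a \emph{proper} subset of $[\bold a]\cap\Sigma$. Consequently your claimed inclusion $[\bold a]\cap\Sigma\subseteq\Sigma\cap\bigcup_{q\in\mathbb N^*}[\bold a q]$ fails. Concretely, take $p^*=2$ (so $\mathbb N_*=\{1\}$) and $\bold a=a_1$ with $a_1=\bigcup_{q\in\mathbb N^*}[2q]$, so $[\bold a]=[2]$ and $|\bold a|=1$; the point $x=(2,1,2,2,2,\ldots)$ lies in $[\bold a]\cap\Sigma$ but $x_{|\bold a|+1}=1\notin\mathbb N^*$, so $x\notin\bigcup_{q\in\mathbb N^*}[\bold a q]$, and $\Pi^{-1}(x)_1=\bigcup_q[21q]\neq a_1$. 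The precise point where your reasoning breaks is the assertion that for $x\in[\bold a]\cap\Sigma$ one has $\sigma^{|\bold a|}x=\tau^n(x)$: matching the first $|\bold a|$ coordinates of $x$ to $p_1\omega^{(j_1)}\cdots p_n\omega^{(j_n)}$ pins down the first $n-1$ return times as $1+j_1,\ldots,1+j_{n-1}$, but the $n$-th return time $R(\tau^{n-1}x)$ is only bounded below by $1+j_n$; it equals $1+j_n$ precisely when $x_{|\bold a|+1}\in\mathbb N^*$, which is what you are trying to prove, so the argument is circular. The correct direction is the one the paper takes: start from $y\in\Pi\bra\bold a\ket$, so that the first $n$ $\mathbb A$-symbols of $\Pi^{-1}(y)$ really are $a_1,\ldots,a_n$; then by the definition \eqref{ahat} of $\mathbb A$ the first $n$ return times of $y$ are $1+j_1,\ldots,1+j_n$, hence $\tau^n(y)=\sigma^{|\bold a|}y\in X^*$ and $y_{|\bold a|+1}\in\mathbb N^*$, giving $\Pi\bra\bold a\ket\subseteq\Sigma\cap\bigcup_q[\bold a q]$. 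The reverse inclusion is then the routine one: if $x\in\Sigma\cap[\bold a q]$ with $q\in\mathbb N^*$, the alternation of $\mathbb N^*$- and $\mathbb N_*$-symbols in $x_1\cdots x_{|\bold a|+1}$ forces the first $n$ blocks of $x$ to be exactly $p_1\omega^{(j_1)},\ldots,p_n\omega^{(j_n)}$, so $\Pi^{-1}(x)\in\bra\bold a\ket$.
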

\begin{proof}
 If $n\geq1$, $\bold a\in\mathbb A^n$ 
   then the sum
   $\sum_{k=0}^{n-1}R\circ\tau^k$ of return times equals $|\bold a|$ on $\Pi\bra\bold a\ket$, which implies (a).
 If $\bold a$, $\bold b\in W(\mathbb A)$ satisfy
 $|\bold a|=
|\bold b|$ and
 $\bold a\neq\bold b$, then (a) implies 
 $\Pi\bra\bold a\ket\cap\Pi\bra \bold b\ket=\emptyset$, and
so
$\bra\bold a\ket\cap\bra \bold b\ket=\emptyset$, verifying (b).
\end{proof}

\subsection{Local Gibbs states}\label{ind-G}
 Let
$\phi\colon X\to\mathbb R$ and let $(\Sigma,\tau|_{\Sigma})$ be an induced system.
A Borel probability measure $\lambda_\phi$ on ${ \mathbb A}^{\mathbb N}$ is called
a {\it local Gibbs state for the potential $\phi$ associated with $(\Sigma,\tau|_{\Sigma})$}, if 
there exist constants $C\geq1$, $\gamma_0\in\mathbb R$ such that
for any
$\bold a\in  W(\mathbb A)$ and any
$x\in\Pi[\![\bold a]\!]$ we have
\begin{equation}\label{induce-gibbs}C^{-1}\leq\frac{\lambda_\phi[\![\bold a]\!]}{\exp\left(S_{|\bold a| }\phi(x)-\gamma_0|\bold a|\right)}\leq C.\end{equation}
We do not require the $\theta$-invariance of $\lambda_\phi$.
 We will simply call $\lambda_\phi$ a local Gibbs state (associated with $(\Sigma,\tau|_{\Sigma})$)
 if the context is clear.

\begin{remark}[Examples of local Gibbs states]\label{rem-renyi}
The 
R\'enyi map $T$ in \eqref{renyi} is modeled on the countable full shift $X$ via the coding map $\pi$ in \eqref{pi}.
In consideration of the neutral fixed point $0$ of $T$,
we set $p^*=2$ and
define an inducing scheme $(X^*,R)$ by
\eqref{xstar}, \eqref{def-R}, and
the induced system $(\Sigma,\tau|_{\Sigma})$ by  \eqref{sigmahat}, \eqref{xhat}. We also define the infinite alphabet
$\mathbb A$ and the coding map $\Pi$  by \eqref{ahat}, \eqref{codingmap} respectively. Note that
 $\Sigma=\left(1/2,1\right)\cap\mathbb I$.
Let $\mu_1$ denote the normalized restriction of the Lebesgue measure on $\mathbb R$ to $(1/2,1)$.
Let $\mu_2$ denote the normalized restriction of ${\rm d}x/x$ to $(1/2,1)$. Both $\mu_1\circ\pi\circ\Pi$ and $\mu_2\circ\pi\circ\Pi$ are local Gibbs states for the potential $\phi=-\log|T'\circ\pi|$ with $\gamma_0=0$. The latter is a $\theta$-invariant measure.\end{remark}

If $\lambda_\phi$ is a local Gibbs state, then for any $\bold a\in W(\mathbb A)$, the $\lambda_\phi$-measure of the cylinder $\bra\bold a\ket$ in $\mathbb A^\mathbb N$ is given (up to multiplicative constants) by the Birkhoff sum of $\phi$
along the orbit of $x$ of length $|\bold a|$ and the word length $|\bold a|$. 
Since the word length of $\bold a$
as a word in $W(\mathbb A)$ does not appear in the formula \eqref{induce-gibbs}, $\lambda_\phi$ well captures part of the original dynamics $(X,\sigma)$.

\begin{lemma}\label{spread}
 Let $\phi\colon X\to\mathbb R$, let $(\Sigma,\tau|_{\Sigma})$ be an induced system and let $\lambda_\phi$ be a local Gibbs state for the potential $\phi$ associated with $(\Sigma,\tau|_{\Sigma})$. 
 For any $\bold a\in W(\mathbb A)$ we have
\[\lambda_\phi[\![\bold a]\!]=\lambda_\phi\circ\Pi^{-1}(\Sigma\cap[\bold a]).\]
\end{lemma}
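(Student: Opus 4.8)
The plan is to reduce the asserted identity to the set-theoretic equality $\Pi\bra\bold a\ket=\Sigma\cap[\bold a]$, after which the conclusion is a mere change of variables under the bijection $\Pi$. Indeed, since $\Pi\colon\mathbb A^{\mathbb N}\to\Sigma$ is a homeomorphism (in particular a Borel isomorphism), $\Pi^{-1}(\Pi\bra\bold a\ket)=\bra\bold a\ket$; hence, once $\Sigma\cap[\bold a]=\Pi\bra\bold a\ket$ is established, one gets $\lambda_\phi\circ\Pi^{-1}(\Sigma\cap[\bold a])=\lambda_\phi\bigl(\Pi^{-1}(\Pi\bra\bold a\ket)\bigr)=\lambda_\phi\bra\bold a\ket$, as claimed. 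Note that this argument uses only that $\lambda_\phi$ is a Borel measure on $\mathbb A^{\mathbb N}$ and that $\Pi$ is the coding map; the local Gibbs inequality \eqref{induce-gibbs} plays no role.

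The inclusion $\Pi\bra\bold a\ket\subseteq\Sigma\cap[\bold a]$ is immediate from Lemma~\ref{disjoint}(a): it gives $\Pi\bra\bold a\ket=\Sigma\cap\bigcup_{q\in\mathbb N^*}[\bold a q]$, and each word $\bold a q$ extends the word $\bold a$ in $W(\mathbb N)$, so $[\bold a q]\subseteq[\bold a]$. For the reverse inclusion I would fix $x\in\Sigma\cap[\bold a]$, write $\bold a=a_1\cdots a_n$ with $a_k=\bigcup_{q\in\mathbb N^*}[p_k\omega^{(j_k)}q]$, $p_k\in\mathbb N^*$, $\omega^{(j_k)}\in\mathbb N_*^{j_k}$, and track the first $n$ returns of $x$ to $X^*$. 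Since $x\in[\bold a]$ prescribes the coordinates $x_1\cdots x_{|\bold a|}$, and inside each block $p_k\omega^{(j_k)}$ the first coordinate lies in $\mathbb N^*$ while the remaining ones lie in $\mathbb N_*$, an induction on $k$ shows $R\circ\tau^k(x)=|a_{k+1}|$ for $0\le k\le n-1$, whence $\tau^n(x)=\sigma^{|a_1|+\cdots+|a_n|}x=\sigma^{|\bold a|}x$ using $|\bold a|=\sum_{k=1}^n|a_k|$. Because $x\in\Sigma$, the point $\tau^n(x)$ lies in the domain $\bigcup_{k\ge1}\{R=k\}\subseteq X^*$ of $\tau$, so its first coordinate $x_{|\bold a|+1}$ belongs to $\mathbb N^*$; that is, $x\in\bigcup_{q\in\mathbb N^*}[\bold a q]$, and therefore $x\in\Pi\bra\bold a\ket$ by Lemma~\ref{disjoint}(a). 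This yields $\Sigma\cap[\bold a]=\Pi\bra\bold a\ket$ and finishes the proof.

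The only point requiring care is the inductive bookkeeping of the return times: one must verify that the first $n$ returns of $x\in\Sigma\cap[\bold a]$ are forced to occur at exactly the positions $|a_1|,\ |a_1|+|a_2|,\ \ldots,\ |\bold a|$, neither earlier nor later. This rests precisely on the dichotomy between $\mathbb N^*$ and $\mathbb N_*$ built into the alphabet $\mathbb A$, together with the convention that each word $\omega^{(j_k)}$ is allowed to be empty. Everything else is routine.
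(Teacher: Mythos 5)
Your proposal is correct, and the mathematical content is essentially the same as the paper's: both hinge on the observation that, because $\tau(\Sigma)\subset\Sigma$, the point $\sigma^{|\bold a|}x=\tau^n(x)$ lies in $\Sigma\subset X^*$ whenever $x\in\Sigma\cap[\bold a]$, so its leading coordinate belongs to $\mathbb N^*$, which rules out the continuations $\bold a q$ with $q\in\mathbb N_*$.

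Where your writeup differs is purely organizational, and to your credit it is the cleaner of the two. You isolate the purely set-theoretic identity $\Pi\bra\bold a\ket=\Sigma\cap[\bold a]$ and then read off the measure statement by pushing forward under the Borel isomorphism $\Pi$, correctly noting that the Gibbs inequality plays no role. The paper instead introduces an auxiliary measure $\mu_\phi=\sum_{n\ge1}\sum_{k=0}^{n-1}\nu_\phi|_{\{R=n\}}\circ\sigma^{-k}$, proves $\mu_\phi|_\Sigma=\nu_\phi$, and then passes through $\mu_\phi\Pi\bra\bold a\ket$ and a decomposition $\Sigma\cap[\bold a]=\bigcup_{q\in\mathbb N_*}(\Sigma\cap[\bold a q])\cup\bigcup_{q\in\mathbb N^*}(\Sigma\cap[\bold a q])$, showing the $\mathbb N_*$-part is empty. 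The $\mu_\phi$ detour is not needed for the lemma (it seems to echo the Abramov--Kac construction that reappears in Remark~\ref{rem-equi}); what survives after stripping it away is exactly your argument. Your ``only point requiring care'' remark is also well placed: the bookkeeping that the $n$ first-return times of $x\in\Sigma\cap[\bold a]$ land exactly at $|a_1|,|a_1|+|a_2|,\dots,|\bold a|$ is precisely what the paper encodes by asserting that $\sigma^{|\bold a|}$ and $\tau^j$ coincide on $\Sigma\cap[\bold a]$, and this is the one step that genuinely uses the structure of $\mathbb A$ and the convention on empty words $\omega^{(j_k)}$.
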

\begin{proof}
We write $\nu_\phi$ for
$\lambda_\phi\circ\Pi^{-1}$.
 We have
$\nu_\phi\{R=n\}>0$ for every $n\geq1$. Let $\nu_\phi|_{\{R=n\}}$ denote the restriction of $\nu_\phi$ to $\{R=n\}$.
The measure
\[\mu_\phi=\sum_{n=1}^\infty\sum_{k=0}^{n-1}\nu_\phi|_{\{R=n\} }\circ\sigma^{-k}\]
is a finite measure if and only if $\int R{\rm d}\nu_\phi<\infty$.
Since the restriction of $R$ to $\Sigma$ is the first return time to $\Sigma$, 
$\{R=n\}$ is disjoint from $\Sigma\cap\bigcup_{k=1}^{n-1}\sigma^{-k}(\Sigma)$ for $n\geq2$ and we have
\begin{equation}\label{measure-lem0}\mu_\phi|_{\Sigma}=
\sum_{n=1}^\infty
 \nu_\phi|_{\{R=n\}}=\nu_\phi=\lambda_\phi\circ\Pi^{-1}.\end{equation}
  For any 
 $\bold a\in  W(\mathbb A)$ we have
 $\Pi[\![\bold a]\!]\subset \Sigma$, and so
\begin{equation}\label{measure-lem10}\lambda_\phi[\![\bold a]\!]=\mu_\phi\Pi[\![\bold a]\!].\end{equation}
 By 
$\mu_\phi|_{\Sigma}=\nu_\phi$ in \eqref{measure-lem0} and Lemma~\ref{disjoint}(a), 
for any $\bold a\in W(\mathbb A)$ we have
\begin{equation}\label{measure-lem1}\mu_\phi\Pi[\![\bold a]\!]=\nu_\phi\Pi[\![\bold a]\!]=
\sum_{q\in\mathbb N^*}\nu_\phi(\Sigma\cap[\bold a q]).\end{equation}
Let $j\geq1$ satisfy $\bold a\in\mathbb A^j$.
Then
$\sigma^{|\bold a|}$ and
$\tau^j$ coincide on $\Sigma\cap[\bold a]$.
By $\tau(\Sigma)\subset\Sigma$ we have
 $\bigcup_{q\in\mathbb N_*}(\Sigma\cap[\bold a q])\subset(\tau|_{\Sigma})^{-j}(\bigcup_{q\in\mathbb N_*}[q])=\emptyset$, and so
 $\nu_\phi(\bigcup_{q\in\mathbb N_*}\Sigma\cap[\bold a q])=0.$
Combining this with \eqref{measure-lem10}, \eqref{measure-lem1} we obtain
\[\lambda_\phi[\![\bold a]\!]=\sum_{q\in\mathbb N_*}\nu_\phi(\Sigma\cap[\bold a q])+\sum_{q\in\mathbb N^*}\nu_\phi(\Sigma\cap[\bold a q])=\nu_\phi(\Sigma\cap[\bold a]),\]
as required.
\end{proof}

\subsection{Exponential decay on partition functions}\label{tight-section}
The next proposition provides a main technical estimate
under the existence of a local Gibbs state.

\begin{prop}\label{expo}
Let $\phi\colon X\to\mathbb R$, let $(\Sigma,\tau|_{\Sigma})$ be an induced system and let $\lambda_\phi$ be a local Gibbs state for the potential $\phi$ associated with $(\Sigma,\tau|_{\Sigma})$.
There exist $\delta'\in(0,1/5]$ and $n_0\geq1$ such that if $\delta\in(0,\delta']$ and $\{N_i\}_{i=1}^\infty$ 
is a non-decreasing sequence of integers
such that
\begin{equation}\label{N1}\max\mathbb N_*\leq N_1 \text{ and }\end{equation} 
  \begin{equation}\label{deltas}
 \sum_{k=N_i+1}^\infty\sum_{\stackrel{ \bold a\in\mathbb A }
 {\Pi\bra\bold a\ket\subset[k]}} \lambda_\phi\bra  \bold a \ket\leq\delta^{2i} \text{ for every }i\geq1,\end{equation}
then for every 
$n\geq n_0$ and every $m\in\{1,\ldots,n\}$ we have
\begin{equation}\label{desired-ineq}
\sum_{\stackrel{x\in E_n }
{V_n(x)(X\setminus\Gamma)=m/n}}
\exp S_n\phi(x)
\leq \frac{2^nne^{\gamma_0n}(4\delta)^m}{1-4\delta},\end{equation}
 where \begin{equation}\label{Gamma}\Gamma=\{x=(x_i)_{i=1}^\infty\in X\colon x_i\leq N_i \text{ for every }i\geq1\}.\end{equation}
\end{prop}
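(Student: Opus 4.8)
The plan is to transport the sum from periodic points of $\sigma$ to cylinders of the induced system, use the local Gibbs inequality \eqref{induce-gibbs} to replace Birkhoff sums of $\phi$ by $\lambda_\phi$-measures, and then extract the factor $(4\delta)^m$ from the summability hypothesis \eqref{deltas} by a combinatorial analysis of how a periodic orbit leaves $\Gamma$. The thresholds $\delta'$ and $n_0$ will be allowed to depend on the constant $C$ of \eqref{induce-gibbs}.

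\emph{Reduction to the induced system.} Since $m\ge1$, any $x\in E_n$ with $V_n(x)(X\setminus\Gamma)=m/n$ has an iterate outside $\Gamma$, i.e. $(\sigma^tx)_i>N_i$ for some $t,i$; as $N_i\ge N_1\ge\max\mathbb N_*=p^*-1$ by \eqref{N1}, this forces $(\sigma^{t+i-1}x)_1\ge p^*$, so the orbit of $x$ meets $X^*$. Both $\exp S_n\phi(\cdot)$ and the condition $V_n(\cdot)(X\setminus\Gamma)=m/n$ are constant along period-$n$ orbits, so, passing from a full orbit to one of its representatives in $X^*$ at the cost of a factor at most $n$ (the orbit length), it suffices to estimate $\sum\exp S_n\phi(\tilde x)$ over $\tilde x\in E_n$ with $\tilde x_1\ge p^*$ and $V_n(\tilde x)(X\setminus\Gamma)=m/n$. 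Such a $\tilde x$ is determined by the word $\tilde x_1\cdots\tilde x_n\in\mathbb N^n$ of its first $n$ coordinates, which (since $\tilde x_1\ge p^*$) is the word $[\bold a]$ of a unique $\bold a\in W(\mathbb A)$ with $|\bold a|=n$; moreover $\tilde x\in\Sigma$ and $\tilde x\in\Pi\bra\bold a\ket$ by Lemma~\ref{disjoint}(a), so \eqref{induce-gibbs} gives $\exp S_n\phi(\tilde x)\le C\,e^{\gamma_0 n}\,\lambda_\phi\bra\bold a\ket$, and $\bold a\mapsto\Pi(\overline{\bold a})$ is a bijection onto this set of $\tilde x$'s. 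Thus I am reduced to bounding $\sum\lambda_\phi\bra\bold a\ket$ over $\bold a\in W(\mathbb A)$ with $|\bold a|=n$ for which the orbit of $\Pi(\overline{\bold a})$ has exactly $m$ iterates outside $\Gamma$ (call these the \emph{bad iterates}).

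\emph{Which letters create bad iterates.} Write $\bold a=a_1\cdots a_j$ with $a_k$ corresponding to $p_k\omega^{(j_k)}$, $p_k\in\mathbb N^*$, and set $\ell_k=\#\{i\ge1:N_i<p_k\}$ (monotonicity of $\{N_i\}$ makes this an initial segment of $i$'s). Since the symbols of $\omega^{(j_k)}$ are $<p^*\le N_1$, the $p_k$ are the only coordinates among $\tilde x_1,\dots,\tilde x_n$ that can exceed any $N_i$; unwinding the definition of $\Gamma$ one finds that the bad iterates are exactly $\bigcup_k\{s_k-1,\dots,s_k-\min(\ell_k,n)\}$ modulo $n$, where $s_k$ is the position of $p_k$. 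Hence $m\le\sum_k\ell_k$, and, since each $k$ with $\ell_k\ge1$ supplies the distinct bad iterate $s_k-1$, also $\#\{k:\ell_k\ge1\}\le m$. Therefore it is enough to bound $\sum\lambda_\phi\bra\bold a\ket$ over all $\bold a=a_1\cdots a_j$ with $j\le n$, $\sum_k\ell_k\ge m$ and $\#\{k:\ell_k\ge1\}\le m$ (we no longer need $|\bold a|=n$, only the automatic $j\le n$).

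\emph{The counting estimate, and the main difficulty.} Group this sum by $j$, by the set $M=\{k\le j:\ell_k\ge1\}$ (of size $r\le m$), by the position of $M$ inside $\{1,\dots,j\}$, and by the values $(e_k)_{k\in M}$ with $e_k\ge1$ and $\sum_{k\in M}e_k=s\ge m$. For fixed data one sums $\lambda_\phi\bra a_1\cdots a_j\ket$ over admissible $a_1,\dots,a_j$ by peeling letters off from the right: at a position outside $M$ one uses the exact identity $\sum_{a\in\mathbb A}\lambda_\phi\bra\bold b a\ket=\lambda_\phi\bra\bold b\ket$, while at a position $k\in M$ one uses the quasi-Bernoulli bound $\lambda_\phi\bra\bold b a\ket\le C^{3}\lambda_\phi\bra\bold b\ket\lambda_\phi\bra a\ket$ (three applications of \eqref{induce-gibbs}, obtained by splitting the Birkhoff sum of $\phi$ along $[\bold b a]$) together with $\sum_{a:\ell(a)\ge e}\lambda_\phi\bra a\ket=\sum_{k>N_e}\sum_{\Pi\bra a\ket\subset[k]}\lambda_\phi\bra a\ket\le\delta^{2e}$ from \eqref{deltas}. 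This yields the uniform bound $C^{3r}\delta^{2s}$; summing over $(e_k)$ (there are $\binom{s-1}{r-1}\le2^{s-1}$ of them), over $j$ and $M$ via the hockey-stick identity $\sum_{j=r}^n\binom jr=\binom{n+1}{r+1}\le2^{n+1}$, and finally over $r\le m$ and $s\ge m$ through a geometric series in $s$, gives $\sum\lambda_\phi\bra\bold a\ket\le 2^{n}m(2C^{3}\delta^{2})^{m}/(1-2\delta^{2})$. Multiplying by the factor $n\,C\,e^{\gamma_0 n}$ from the reduction and choosing $\delta'\in(0,1/5]$ so small (depending on $C$) that $C\,m\,(C^{3}\delta/2)^{m}\le1$ for every $m\ge1$ and every $\delta\le\delta'$ — which is possible precisely because \eqref{deltas} carries the exponent $2i$, twice the exponent needed in the conclusion — and taking $n_0$ large enough for the above to be in force, one obtains \eqref{desired-ineq}. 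The crux is exactly this last step: since \eqref{induce-gibbs} is only an estimate up to the multiplicative constant $C$ on a \emph{single} cylinder, a careless composition over all $j\le n$ letters would cost $C^{O(n)}$, which no $2^n$ could absorb; one must arrange the telescoping so that $C$ is paid only at the at most $m$ letters with $\ell_k\ge1$, leaving a $C^{O(m)}$ that is swallowed by the surplus power of $\delta$. The other delicate point is the cyclic bookkeeping giving $m\le\sum_k\ell_k$ and $\#\{k:\ell_k\ge1\}\le m$.
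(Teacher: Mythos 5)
Your proof is correct, and while it reaches the same bound it organizes the combinatorics quite differently from the paper. The paper stays on the original shift and parameterizes a periodic point $x\in E_n\cap\Sigma$ by its \emph{excursion itinerary}: the departure times $n_1<\cdots<n_s$ at which the $\sigma$-orbit leaves $\Gamma$ together with the depths $r_j=r(\sigma^{n_j}x)$, partitions $E_n\cap\Sigma$ into cells $\Delta_{n_1\cdots n_s}^{r_1\cdots r_s}$, and bounds each cell by a recursion (Lemma~\ref{measure}) that repeatedly cuts the Birkhoff sum along $\tau$-segments, invoking the Gibbs product property and the tail hypothesis \eqref{deltas} at every cut; it then counts itineraries via $\binom{n}{s}\binom{R+s-1}{s-1}$. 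You instead pass to the induced alphabet in one step, observing that the map $\bold a\mapsto\overline{[\bold a]}$ is a bijection from $\{\bold a\in W(\mathbb A):|\bold a|=n\}$ onto $E_n\cap\Sigma$ and converting $\exp S_n\phi$ into $\lambda_\phi\bra\bold a\ket$ by a single application of \eqref{induce-gibbs}; after that you never touch $\phi$ again. Your parameterization is by the \emph{bad letters}: the set $M=\{k:\ell_k\ge1\}$ of induced positions whose leading digit exceeds $N_1$, with multiplicities $e_k=\ell_k$, and the bounds $m\le\sum_k\ell_k$, $|M|\le m$ play the role of the paper's $R\in\{m,\dots,2n\}$, $s\le m$. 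The right-to-left telescoping, paying $C^3$ only at the $|M|\le m$ positions in $M$ and using the exact identity $\sum_{a}\lambda_\phi\bra\bold b a\ket=\lambda_\phi\bra\bold b\ket$ elsewhere, is precisely the analogue of the paper's recursive application of \eqref{expo-eq60} and \eqref{expo-eq51}, and the hockey-stick and composition counts land in the same $2^{O(n)}(\text{const}\cdot\delta)^m$ regime. Your route has the advantage of making the role of quasi-multiplicativity of $\lambda_\phi$ fully transparent and of isolating exactly where constants $C$ must be paid; the paper's has the advantage of never needing the explicit bijection between period-$n$ points and $\mathbb A$-words of $X$-length $n$, which is a mild structural fact you do have to justify (and do, via periodicity and $\tilde x_1\ge p^*$). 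One small caveat: your ``bad iterates are exactly $\bigcup_k\{s_k-1,\dots,s_k-\min(\ell_k,n)\}$ mod $n$'' and the twin inequalities $m\le\sum\ell_k$, $|M|\le m$ deserve a line of justification in a final write-up (the union can overlap, $s_k-1$ are pairwise distinct residues, etc.), but the claims are correct and you flag them as the delicate point.
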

Proposition~\ref{expo} asserts that the
contribution of elements of $E_n$ to 
$Z_n(\phi)$ whose orbits escape from the compact set $\Gamma$ exactly $m$ times within period $n$ is exponentially small in $m$.
Similar estimates were obtained in \cite{Tak19} under the existence of Bowen's Gibbs states.
\begin{proof}[Proof of Proposition~\ref{expo}]
Since $\lambda_\phi$ is a local Gibbs state, 
there exist constants $C\geq1$ and $\gamma_0\in\mathbb R$ such that for any
$\bold a\in  W(\mathbb A)$ and any
$x\in\Pi[\![\bold a]\!]$ we have
\begin{equation}\label{equation1}C^{-1}\leq\frac{\lambda_\phi[\![\bold a]\!]}{\exp\left(S_{|\bold a| }\phi( x)-\gamma_0|\bold a|\right)}\leq C.\end{equation}
For the rest of the proof of Proposition~\ref{expo}, we will use the notation
 $a\ll b$ 
for two positive reals $a$, $b$
to indicate that 
 $a/b$ is bounded from above by a constant which depends only on $C$ in \eqref{equation1}. 
If $a\ll b$ and $b\ll a$, we will write
$a\asymp b$.

The first inequality in \eqref{equation1} will be used 
to bound a partial sum of $Z_n(\phi)$ 
from above by a sum of $\lambda_\phi$-measures
of cylinders in ${\mathbb A}^{\mathbb N}$.
Further, we will bound this sum using
the following product property
which is a consequence of \eqref{equation1}:
\begin{equation}\label{equation2}
\lambda_\phi[\![\bold a\bold b]\!]\asymp\lambda_\phi[\![\bold a]\!]\lambda_\phi[\![\bold b]\!] \text{ for  }\bold a,\bold b\in W(\mathbb A).\end{equation}

Let
$\delta\in(0,1/5]$ and let $\{N_i\}_{i=1}^\infty$ be a non-decreasing integer sequence satisfying 
\eqref{N1} and \eqref{deltas}. Let $\Gamma=\Gamma(\{N_i\}_{i=1}^\infty)$ be the compact subset of $X$ given by \eqref{Gamma}.
Let $n\geq1$.
If $x\in E_n\setminus \bigcup_{i=0}^{n-1}\sigma^{-i}(\Sigma)$ then
$x_i\leq \max\mathbb N_*\leq N_1\leq N_i$ for  $1\leq i\leq n$, and so
$V_n(x)(\Gamma)=1$.
By this and the periodicity of elements of $E_n$, for $1\leq m\leq n$ we have
\begin{equation}\label{step1}\begin{split}\sum_{\stackrel{x\in E_n }
{V_n(x)(X\setminus\Gamma)=m/n }}\exp S_n\phi(x)=&
\sum_{\stackrel{x\in E_n \cap\bigcup_{i=0}^{n-1}\sigma^{-i}(\Sigma)}
{V_n(x)(X\setminus\Gamma)=m/n }}\exp S_n\phi(x)\\
&+\sum_{\stackrel{x\in E_n \setminus\bigcup_{i=0}^{n-1}\sigma^{-i}(\Sigma)}
{V_n(x)(X\setminus\Gamma)=m/n }}\exp S_n\phi(x)\\
=&\sum_{\stackrel{x\in E_n \cap\bigcup_{i=0}^{n-1}\sigma^{-i}(\Sigma)}
{V_n(x)(X\setminus\Gamma)=m/n }}\exp S_n\phi(x)\\
\leq&
n\cdot\sum_{\stackrel{x \in E_n\cap \Sigma }
{V_n(x)(X\setminus\Gamma)=m/n }}\exp S_{n}\phi(x).\end{split}\end{equation}
In order to bound the last sum in \eqref{step1},
 we decompose the set $\{x\in E_n\cap \Sigma\colon
 V_n(x)
 (X\setminus\Gamma)=m/n\}$ into subsets sharing the same itinerary `up to time $n$', and estimate a contribution from each subset separately, and finally unify all these estimates counting the total number of possible itineraries.

Define a function $r\colon X\setminus\Gamma\to\mathbb N$ by
\[r(x)=\min\{i\geq1\colon x_i>N_i\}.\]
Let $x\in E_n\cap\Sigma$ satisfy $V_n(x)(X\setminus\Gamma)>0$.
By an {\it itinerary} of $x$ we mean a pair of  sequences $\{n_j( x)\}_{j=1}^\infty$ 
and $\{r_j( x)\}_{j=1}^\infty$
in $\mathbb N_0$ given by the recursion formulas
\[\begin{split}n_1(x)&=\min\{i\geq0\colon \sigma^{i}x\notin \Gamma\},\text{ and }\\
 r_j(x)&=r(\sigma^{n_j(x)}x),\ n_{j+1}(x)=
 \min\{i\geq n_j(x)+r_j(x)\colon \sigma^{i}x\notin \Gamma\}
  \text{ for }j\geq1.
 \end{split}\]
  Note that
 \[0\leq n_1(x)\leq\cdots \leq n_j(x)\leq n_j(x)+r_j(x)-1<n_{j+1}(x)\leq\cdots\]

  \begin{lemma}\label{gamma-easy}
  Let $x\in E_n\cap\Sigma$ satisfy $V_n(x)(X\setminus\Gamma)>0$.
\begin{itemize}
\item[(a)] $\{i\geq0\colon \sigma^i x\notin \Gamma\}=\bigcup_{j=1}^\infty[n_j(x),n_j(x)+r_j( x)-1]\cap\mathbb N_0.$
\item[(b)] $x_{n_j(x)+r_j(x)}\in\mathbb N^*$ for every $j\geq1$.
\item[(c)]
 If
  $n_j(x)\leq n-1$ then $ n_j(x)+ r_j(x)\leq 2n-1$.
\end{itemize}
\end{lemma}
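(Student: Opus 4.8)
The three assertions of Lemma~\ref{gamma-easy} are essentially bookkeeping facts about the recursively defined itinerary, so the proof should be a matter of unwinding the definitions of $r$, $n_j$, $r_j$ carefully and invoking the structure of the inducing scheme. I would treat the three items in order, since (a) sets up the combinatorial picture that makes (b) and (c) transparent.

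For \textbf{(a)}, the plan is to prove the two inclusions. The inclusion $\supset$ is immediate: by definition $\sigma^{n_j(x)}x\notin\Gamma$, and for $n_j(x)<i\le n_j(x)+r_j(x)-1$ one has $r(\sigma^{n_j(x)}x)=r_j(x)>i-n_j(x)$, which by the definition of $r$ as a \emph{minimum} forces $(\sigma^{n_j(x)}x)_{i-n_j(x)}>N_{i-n_j(x)}$; but this is exactly the condition that $\sigma^i x$ disagrees with $\Gamma$ in coordinate $i-n_j(x)$... here I need to be a touch careful, because membership in $\Gamma$ involves \emph{all} coordinates shifted, not a single one — actually the cleaner route is: $r(\sigma^{n_j(x)}x)=r_j(x)$ means $(\sigma^{n_j(x)}x)_\ell\le N_\ell$ for $1\le\ell<r_j(x)$, so for $i$ in the stated range the point $\sigma^i x$ has its first $r_j(x)-(i-n_j(x))\ge 1$ coordinates bounded by the corresponding $N$'s but coordinate... no. The honest statement is just that $\sigma^{n_j(x)}x\notin\Gamma$ and $n_{j+1}(x)$ is defined as the \emph{first} time $\ge n_j(x)+r_j(x)$ with $\sigma^i x\notin\Gamma$, so \emph{a priori} nothing is claimed about the intermediate indices $n_j(x)+1,\dots,n_j(x)+r_j(x)-1$ except through $r$. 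I expect the intended argument is: once $\sigma^{n_j(x)}x\notin\Gamma$, the index $r_j(x)=r(\sigma^{n_j(x)}x)$ is precisely the number of steps one must shift before the orbit re-enters $\Gamma$ for the first time, because $r(y)$ for $y\notin\Gamma$ equals the first-entry time of $y$ into $\Gamma$ — this is a small lemma in itself ($\sigma^\ell y\in\Gamma$ iff $(\sigma^\ell y)_i\le N_i$ for all $i$ iff $y_{\ell+i}\le N_{\ell+i-\ell}$... which uses monotonicity of $\{N_i\}$: since $N_i$ is non-decreasing, $y_{\ell+i}\le N_i\le N_{\ell+i}$ is \emph{weaker} than $y_{\ell+i}\le N_{\ell+i}$, so this is not automatic). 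The key realization, which I'd make explicit, is that the monotonicity of $\{N_i\}$ is exactly what makes ``$\sigma^\ell y\in\Gamma$'' entail ``$\sigma^{\ell'}y\in\Gamma$ for all $\ell'\ge\ell$'' when... no, that's false too. I think the correct and only needed fact is the direct unwinding: $\{i\ge 0:\sigma^i x\notin\Gamma\}$ is, by the recursion, covered by the stated union once one checks that between $n_j(x)+r_j(x)-1$ and $n_{j+1}(x)$ there are no escapes (true by minimality in the definition of $n_{j+1}$) and that within $[n_j(x),n_j(x)+r_j(x)-1]$ every index \emph{is} an escape. The latter is the crux of (a) and I'd prove it by induction on the offset using that $r(\sigma^{n_j(x)}x)=r_j(x)$ together with the observation $r(\sigma y)\le r(y)-1$ whenever $1\le r(y)$ and... — this sub-step, relating $r$ along an orbit segment, is where I expect the only real (minor) friction; everything downstream is formal.

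For \textbf{(b)}: at time $n_j(x)+r_j(x)$ the orbit re-enters $\Gamma$ but immediately before, at time $n_j(x)+r_j(x)-1$, it was outside $\Gamma$ with $r(\sigma^{n_j(x)+r_j(x)-1}x)=1$, i.e.\ coordinate $1$ of that point exceeds $N_1\ge\max\mathbb N_*$ by \eqref{N1}; but coordinate $1$ of $\sigma^{n_j(x)+r_j(x)-1}x$ is $x_{n_j(x)+r_j(x)}$, and exceeding $\max\mathbb N_*=p^*-1$ means it lies in $\mathbb N^*=\mathbb N\cap[p^*,\infty)$. So (b) follows once (a) is in hand (it pins down that $n_j(x)+r_j(x)-1$ is an escape time with $r$-value $1$). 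I would just need to also note $r(\sigma^{n_j(x)+r_j(x)-1}x)=1$: from (a), $n_j(x)+r_j(x)$ is \emph{not} an escape but $n_j(x)+r_j(x)-1$ \emph{is}, so shifting the latter point by $1$ lands in $\Gamma$, whence its $r$-value is $1$.

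For \textbf{(c)}: this is the periodicity argument. Since $x\in E_n$, $\sigma^n x=x$, so the escape set $\{i\ge0:\sigma^i x\notin\Gamma\}$ is $n$-periodic and, being nonempty, contains some index in $\{0,\dots,n-1\}$ and hence is $n$-periodic with a gap of at most $n$ between consecutive escapes viewed cyclically. Given $n_j(x)\le n-1$, I'd argue $n_j(x)+r_j(x)-1$, the last escape of the $j$-th block, cannot exceed $n_j(x)+n-1\le 2n-2$: otherwise the block $[n_j(x),n_j(x)+r_j(x)-1]$ would have length $>n$, but then by $n$-periodicity of the escape set the point $\sigma^{n_j(x)+n}x=\sigma^{n_j(x)}x$ would be outside $\Gamma$, consistent, yet also every index in a window of length $n$ starting anywhere would be an escape, forcing \emph{all} of $X$-orbit to escape, contradicting $x\in\Sigma$ (or contradicting $V_n(x)(X\setminus\Gamma)\le 1$ trivially — actually the clean contradiction is with $V_n(x)(\Gamma)>0$, which holds because $x\in\Sigma$ so infinitely many returns to $X^*$ occur, hence... hmm, need $\Gamma$ to have positive empirical mass). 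The cleanest formulation: if the $j$-th escape block had length $\ge n$ then by $n$-periodicity every integer $\ge n_j(x)$ would be an escape time, and by periodicity again every integer $\ge0$ would be, so $V_n(x)(\Gamma)=0$; but $x\in\Sigma\subset X^*$ means $x_1\ge p^*$ — no wait that doesn't bound coordinates. Let me instead contradict differently: $x\in E_n\cap\Sigma$ and being in $\Sigma$ forces, by Lemma~\ref{disjoint} and the partition structure, that along one full period the return-time sum to $X^*$ is $n$; in particular coordinate $n+1=x_1$... I'll pick whichever contradiction is shortest once I write it out. So the plan for (c): use $n$-periodicity of $x$ to get $n$-periodicity of the escape set, then a pigeonhole/periodicity argument bounds $r_j(x)$ by $n$ when the block starts before time $n$, giving $n_j(x)+r_j(x)\le n_j(x)+n\le 2n-1$. \textbf{I expect (c) to be the main obstacle}, purely because it requires identifying the right contradiction ruling out an escape block of length $\ge n$; (a) and (b) are definition-chasing.
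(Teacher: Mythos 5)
Your plan for (a) is essentially the paper's: the key fact is that monotonicity of $\{N_i\}$ makes $r$ decrease by at most one under the shift, and you identify this (as $r(\sigma y)\le r(y)-1$ whenever $y\notin\Gamma$, which follows since $(\sigma y)_{r(y)-1}=y_{r(y)}>N_{r(y)}\geq N_{r(y)-1}$). Iterating this gives $\sigma^k x\notin\Gamma$ for $0\le k\le r(x)-1$, and the $\supseteq$ direction is then the minimality of $n_{j+1}$. Although you waffle before landing on this, the idea is correct.

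For (b) your conclusion is right but the justification contains a small error: you assert that $\sigma^{n_j(x)+r_j(x)}x\in\Gamma$ (``shifting the latter point by $1$ lands in $\Gamma$''), but nothing in (a) says that — it is entirely possible that $n_{j+1}(x)=n_j(x)+r_j(x)$, i.e.\ the next block starts immediately. The paper's route is cleaner and avoids this: directly from the definition of $r_j(x)=r(\sigma^{n_j(x)}x)$ one has $(\sigma^{n_j(x)}x)_{r_j(x)}=x_{n_j(x)+r_j(x)}>N_{r_j(x)}\ge N_1\ge\max\mathbb N_*$, hence $x_{n_j(x)+r_j(x)}\in\mathbb N^*$. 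No claim about re-entering $\Gamma$ is needed.

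Part (c) is where there is a genuine gap, and you already sense it. Your contradiction is aimed at ``$V_n(x)(\Gamma)=0$'', but the hypothesis only gives $V_n(x)(X\setminus\Gamma)>0$, i.e.\ $V_n(x)(\Gamma)<1$, which does \emph{not} exclude $V_n(x)(\Gamma)=0$; your subsequent attempts to bring in $x\in\Sigma$ do not close it, as you note. The argument the paper has in mind is much more direct and does not go through the escape set at all. Let $y=\sigma^{n_j(x)}x$; since $x\in E_n$, $y$ is $n$-periodic, i.e.\ $y_{i+n}=y_i$ for all $i\ge1$. Suppose $r_j(x)=r(y)>n$. Then $y_{r(y)-n}=y_{r(y)}>N_{r(y)}\ge N_{r(y)-n}$ by monotonicity, and $r(y)-n\ge1$, contradicting the minimality in the definition of $r(y)$. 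Hence $r_j(x)\le n$, and since $n_j(x)\le n-1$ this gives $n_j(x)+r_j(x)\le 2n-1$. This is the ``minor friction'' you anticipated in (a), now doing all the work in (c) as well: minimality of $r$ plus monotonicity of $\{N_i\}$, combined here with periodicity.
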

\begin{proof}
Since $\{N_i\}_{i=1}^\infty$ is non-decreasing, 
 if $x\notin \Gamma$ then 
  $\sigma^ix\notin\Gamma$ for $0\leq i\leq r(x)-1$. This implies (a).
Since $\sigma^{n_j(x)} x=x_{n_j(x)+1}x_{n_j(x)+2}\cdots$ and  $\sigma^{n_j(x)}x\notin\Gamma$ with $r(\sigma^{n_j(x)}x)=r_j(x)$, we obtain
 $x_{n_j(x)+r_j(x)}>N_{r_j(x)}\geq N_1$. This together with \eqref{N1} yields
 $x_{n_j(x)+r_j(x)}\in\mathbb N^*$ as in (b).
 Since $x\in E_n$, (c) holds.
\end{proof}

  For each $j\in\{1,\ldots,m\}$,
 $n_1\cdots n_j\in\mathbb N_0^j$
with $0\leq n_1<\cdots <n_j< n$ and
 $r_1\cdots r_j\in\mathbb N^j$,
   we consider the set
 \begin{equation}\label{Delta}\Delta_{n_1\cdots n_j}^{r_1\cdots r_j}=\{ x\in E_n\cap \Sigma\colon (n_i( x),r_i(x) )=(n_i,r_i)\ \text{ for every }i\in\{1,\ldots,j\}\}.\end{equation}
 \begin{lemma}\label{measure}
 If $\delta\in(0,1/5]$ is sufficiently small,  then for all $j\in\{1,\ldots,m\}$, 
  $n_1\cdots n_j\in\mathbb N_0^j$ with $0\leq n_1<\cdots <n_j< n$ and 
  $r_1\cdots r_j\in\mathbb N^j$ we have
\[
\sum_{x\in\Delta_{n_1\cdots n_j}^{r_1\cdots r_j}}\exp S_{n}\phi(x)\leq e^{\gamma_0n}\delta^{r_1+\cdots +r_j}.\]
\end{lemma}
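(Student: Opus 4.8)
The plan is to pass, via the local Gibbs inequality \eqref{equation1}, from the Birkhoff sums $\exp S_n\phi(x)$ to $\lambda_\phi$-measures of cylinders in $\mathbb A^\mathbb N$, and then to estimate the resulting sum of measures by cutting each cylinder into finitely many blocks of $x$-independent $\mathbb N$-length, controlling the pieces through the product property \eqref{equation2}, the disjointness in Lemma~\ref{disjoint}(b), and the tail bound \eqref{deltas}. Write $\Delta=\Delta_{n_1\cdots n_j}^{r_1\cdots r_j}$ and let $C_*\geq1$ denote the constant implicit in \eqref{equation2}.

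First I would record a coding fact: each $x\in E_n\cap\Sigma$ has a unique coding word $\bold a(x)\in W(\mathbb A)$ with $|\bold a(x)|=n$ and $x\in\Pi\bra\bold a(x)\ket$. Indeed, since $x\in\Sigma\subset X^*$ the return times $0=S_0<S_1<\cdots$ of $x$ to $X^*$ are all finite, and $\sigma^nx=x\in X^*$ forces $n=S_\ell$ for some $\ell\geq1$; then $\bold a(x)$ is the word formed by the first $\ell$ letters of $\Pi^{-1}(x)$, whose $\mathbb N$-length is $S_\ell=n$. Distinct points of $E_n\cap\Sigma$ give distinct $\bold a(x)$, because $[\bold a(x)]=[x_1\cdots x_n]$ recovers $x$. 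Hence by \eqref{equation1}, $\exp S_n\phi(x)\leq Ce^{\gamma_0n}\lambda_\phi\bra\bold a(x)\ket$ for every $x\in E_n\cap\Sigma$, so it suffices to bound $\sum_{x\in\Delta}\lambda_\phi\bra\bold a(x)\ket$ by a suitable multiple of $\delta^{2(r_1+\cdots+r_j)}$; the doubled exponent will provide the room to absorb all constants below once $\delta$ is small.

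Next I would isolate, inside $\bold a(x)$, one $\mathbb A$-letter per prescribed excursion. For $x\in\Delta$ and $i\in\{1,\dots,j\}$ put $q_i=((n_i+r_i-1)\bmod n)+1$; by Lemma~\ref{gamma-easy}(b) and \eqref{N1} the coordinate $x_{q_i}$ exceeds $N_{r_i}\geq\max\mathbb N_*$, so $q_i\in\{1,\dots,n\}$ is the left endpoint of one of the $\mathbb A$-letter intervals partitioning $\{1,\dots,n\}$; call that letter $c_i$, so that $\Pi\bra c_i\ket\subset[x_{q_i}]$ with $x_{q_i}>N_{r_i}$. Since $n_{i+1}\geq n_i+r_i$ while $n_{i+1}<n$, only the last excursion can wrap past period $n$, and by Lemma~\ref{gamma-easy}(c) its position then lies in $\{1,\dots,n-1\}$; in particular, listing the \emph{distinct} values among the $q_i$ as $q_{(1)}<\cdots<q_{(j')}$ (where $j'=j$, except $j'=j-1$ in the exceptional case where the wrapped excursion's position coincides with an earlier one) and setting $q_{(j'+1)}=n+1$, the word $\bold a(x)$ factors as $\bold a(x)=B_0\,\tilde c_1\cdots\tilde c_{j'}$, with $\tilde c_i$ the subword on positions $q_{(i)},\dots,q_{(i+1)}-1$ and $B_0$ the subword on positions $1,\dots,q_{(1)}-1$. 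Crucially, the $\mathbb N$-length of every block is then determined by $n$ and the prescribed data $(n_i,r_i)$, and each $\tilde c_i$ begins with one of the letters $c_k$, hence with an $\mathbb A$-letter $a$ having $\Pi\bra a\ket\subset[\kappa]$ for some $\kappa>N_{r_k}$.

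Finally I would assemble the bound. By \eqref{equation2} used $\leq j'$ times, $\lambda_\phi\bra\bold a(x)\ket\leq C_*^{\,j'}\lambda_\phi\bra B_0\ket\prod_i\lambda_\phi\bra\tilde c_i\ket$. The map $x\mapsto(B_0,\tilde c_1,\dots,\tilde c_{j'})$ is injective (its components concatenate to $\bold a(x)$, which determines $x$), so $\sum_{x\in\Delta}\lambda_\phi\bra\bold a(x)\ket$ is at most the product over the blocks of the sums of $\lambda_\phi$-measures of \emph{all} admissible blocks of the prescribed $\mathbb N$-length. For $B_0$ that sum is $\leq\lambda_\phi(\mathbb A^\mathbb N)=1$ by Lemma~\ref{disjoint}(b). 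For each $\tilde c_i$, writing an admissible block as $a\bold d$ with $\Pi\bra a\ket\subset[\kappa]$, $\kappa>N_{r_k}$, one more use of \eqref{equation2} together with the disjointness bound for $\bold d$ gives a sum $\leq C_*\sum_{\kappa>N_{r_k}}\sum_{a\in\mathbb A,\;\Pi\bra a\ket\subset[\kappa]}\lambda_\phi\bra a\ket\leq C_*\,\delta^{2r_k}$ by \eqref{deltas}. Multiplying — and noting that in the exceptional coincidence case the wrapped excursion has strictly larger $r$-value than the earlier one it meets, which leaves the necessary slack — one gets $\sum_{x\in\Delta}\lambda_\phi\bra\bold a(x)\ket\leq C_*^{2j}\delta^{2(r_1+\cdots+r_j)}$ (or an analogous bound with at least as large a $\delta$-exponent). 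Combined with the second paragraph this yields $\sum_{x\in\Delta}\exp S_n\phi(x)\leq CC_*^{2j}e^{\gamma_0n}\delta^{2(r_1+\cdots+r_j)}$; since every $r_i\geq1$ gives $2\sum_ir_i\geq\sum_ir_i+j$, choosing $\delta\leq\min\{1/5,1/(CC_*^2)\}$ makes the prefactor $\leq1$ and yields $\sum_{x\in\Delta}\exp S_n\phi(x)\leq e^{\gamma_0n}\delta^{r_1+\cdots+r_j}$, as claimed. I expect the hard part to be exactly the combinatorial layer of the third paragraph: verifying that the letters $c_i$ occupy distinct letter-intervals, that the block lengths are genuinely $x$-independent, and that the wrap-around at period $n$ (and the position coincidence it may cause) is compatible with the bound; once the decomposition is set up correctly, the rest is a routine application of \eqref{equation1}, \eqref{equation2}, Lemma~\ref{disjoint}(b) and \eqref{deltas}.
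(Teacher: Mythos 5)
Your argument is correct but takes a genuinely different route from the paper. The paper proves the lemma by induction on $j$: after the base case $j=1$ (a two-block decomposition and a single tail estimate), the inductive step bounds the ratio of consecutive restricted sums by $\delta^{r_{j+1}}$, working one prefix $\bold c\in W_{j,3}$ at a time, and crucially relies on Lemma~\ref{spread} and the identity \eqref{boldc} to obtain the \emph{lower} bound \eqref{expo-eq60} on the denominator. You instead decompose $\bold a(x)$ all at once into $j'+1$ blocks at the excursion-landing positions $q_{(i)}$, pull out the product with \eqref{equation2}, and bound each block sum separately; this avoids Lemma~\ref{spread} entirely (you only need the $\leq 1$ direction, i.e.\ Lemma~\ref{disjoint}(b)), at the cost of having to treat the wrap-around past period $n$ explicitly, which you do via the coincidence case. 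One small imprecision there: the exceptional case does \emph{not} give a $\delta$-exponent ``at least as large'' --- the exponent actually drops to $2(R-r_i)$ with $R=r_1+\cdots+r_j$; but since a coincidence forces $r_j>r_i$, hence $R\geq 2r_i+j-1$ and so $R-2r_i\geq j-1\geq 1$, the prefactor satisfies $C\,C_*^{2(j-1)}\delta^{R-2r_i}\leq C\,(C_*^2\delta)^{j-1}\leq 1$ once $\delta\leq 1/(CC_*^2)$, and your conclusion survives. A cosmetic remark: summing the restricted blocks $a\bold d$ does not actually need an extra $C_*$ from \eqref{equation2}, since the cylinders $\bra a\bold d\ket$ over $\bold d$ of a fixed $\mathbb N$-length are disjoint subsets of $\bra a\ket$, so $\sum_{\bold d}\lambda_\phi\bra a\bold d\ket\leq\lambda_\phi\bra a\ket$ directly; this is exactly how the paper reaches $\sum_{\bold b\in W_{0,2}}\lambda_\phi\bra\bold b\ket\leq\delta^{2r_1}$ without any multiplicative constant.
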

\begin{proof}
We may assume $\Delta_{n_1\cdots n_j}^{r_1\cdots r_j}\neq\emptyset$
for otherwise there is nothing to prove. Starting with
the case $j=1$, we introduce two sets of induced words
 \[\begin{split}
W_{0,1}&=\{\bold a\in W(\mathbb A)\colon|\bold a|=n_1+r_1-1\}\ \text{ and }\\
W_{0,2}&=\{\bold b\in W(\mathbb A)\colon|\bold b|=n-n_1-r_1+1,\
\Pi\bra\bold b\ket\subset\cup_{k=N_{r_1}+1}^\infty[k]\}.\end{split}\]
 For each $x\in
    \Delta_{n_1}^{r_1}$ we have $x_{n+1}=x_1\in\mathbb N^*$ by the definition \eqref{Delta}, and
    $x_{n_1+r_1}\in\mathbb N^*$ by Lemma~\ref{gamma-easy}(b).
Hence
    there exist $\bold a\in W_{0,1}$ and $\bold b\in W_{0,2}$ such that
$[\bold a]=[x_1\cdots x_{n_1+r_1-1}]$ and
$[\bold b]=[x_{n_1+r_1}\cdots x_n]$, and 
$x\in\Pi\bra\bold a\bold b\ket$. By
\eqref{equation1} and \eqref{equation2} we have
\[
\exp S_n\phi(x)\ll
 e^{\gamma_0n}\lambda_\phi\bra\bold a\bold b\ket\ll
 e^{\gamma_0n}\lambda_\phi\bra\bold a\ket\lambda_\phi \bra\bold b\ket.\]
 Summing this inequality over all
$x\in\Delta_{n_1}^{r_1}$,
and then using 
$\sum_{\bold a\in W_{0,1} }\lambda_\phi \bra\bold a\ket \leq 1$ and $\sum_{\bold b\in W_{0,2} }\lambda_\phi \bra\bold b\ket \leq \delta^{2r_1}$ which follow from Lemma~\ref{disjoint}(b) and
\eqref{deltas} respectively, we obtain
 \begin{equation}\label{i-step}\begin{split}\sum_{x\in\Delta_{n_1}^{r_1}  } \exp S_{n}\phi(x)\ll &e^{\gamma_0n}\sum_{\bold a
 \in W_{0,1} }\lambda_\phi\bra\bold a\ket
 \sum_{\bold b\in W_{0,2} }\lambda_\phi \bra\bold b\ket
 \leq e^{\gamma_0n}\delta^{2r_1}\leq e^{\gamma_0n}\delta^{r_1},\end{split}
 \end{equation}
 provided $\delta$ is small enough.
  In case $m=1$ we are done.  

To proceed, suppose $m\geq2$.
let $j,j+1\in\{1,\ldots,m\}$ and let
 $n_1\cdots n_jn_{j+1}\in\mathbb N_0^{j+1}$,
  $r_1\cdots r_jr_{j+1}\in\mathbb N^{j+1}$ satisfy
 $\Delta_{n_1\cdots n_jn_{j+1}}^{r_1\cdots r_jr_{j+1}}\neq\emptyset$.
 Define
 \[\begin{split}
 W_{j,1}&=\left\{\bold a\in W(\mathbb A)\colon|\bold a|=n_{j+1}+r_{j+1}-1\right\},\\
 W_{j,2}&=\{\bold b\in W(\mathbb A)\colon|\bold b|=n-n_{j+1}-r_{j+1}+1,\
\Pi\bra\bold b\ket\subset\cup_{k=N_{r_{j+1}}+1}^\infty[k]\},\\
  W_{j,3}&=\{\bold c\in W(\mathbb A)\colon|\bold c|=n_j+r_j,\ \Pi\bra\bold c\ket\cap\Delta_{n_1\cdots n_{j+1}}^{r_1\cdots r_{j+1} }\neq\emptyset\},\\
W_{j,4}&=\left\{\bold d\in  
W(\mathbb A)\colon|\bold d|=n-n_j-r_j\right\}.
\end{split}\]

  Let $\bold c\in W_{j,3}$.
For each $\bold d\in W_{j,4}$ we have $|\bold c\bold d|=n$.
Since $X$ is the full shift,
$\Pi\bra\bold c\bold d\ket$
contains a unique element of $E_n\cap \Sigma$ which we denote by $\overline{\bold c\bold d}$. We have $\overline{\bold c\bold d}\in\Pi\bra\bold c\ket\cap\Delta_{n_1\cdots n_j}^{r_1\cdots r_j}$, and by \eqref{equation1} and \eqref{equation2},
\[\exp S_n\phi(\overline{\bold c\bold d} )\gg e^{\gamma_0n}\lambda_\phi\bra\bold c\ket\lambda_\phi\bra\bold d\ket,\]
 which implies
\begin{equation}\label{expo-eq50}\begin{split}\sum_{x\in
\Pi\bra\bold c\ket\cap\Delta_{n_1\cdots n_j}^{r_1\cdots r_j} }\exp S_n\phi(x)
&\gg e^{\gamma_0n}\lambda_\phi\bra\bold c\ket\sum_{\bold d\in W_{j,4} }
\lambda_\phi\bra\bold d \ket.\end{split}\end{equation}
By Lemma~\ref{spread},
for each $\bold d\in W_{j,4}$ we have
$\lambda_\phi\bra\bold d\ket=\lambda_\phi\circ\Pi^{-1} (\Sigma\cap[\bold d]).$
 Since the sets $\Sigma\cap[\bold d]$,
 $\bold d\in W_{j,4}$ are pairwise disjoint and their union equals $\Sigma$, we have
\begin{equation}\label{boldc}\sum_{\bold d\in W_{j,4}}\lambda_\phi\bra\bold d\ket=1.
\end{equation}
Combining \eqref{expo-eq50} and \eqref{boldc} yields
\begin{equation}\label{expo-eq60}\begin{split}\sum_{x\in\Pi\bra\bold c\ket\cap\Delta_{n_1\cdots n_j}^{r_1\cdots r_j} }\exp S_n\phi(x)
\gg e^{\gamma_0n}\lambda_\phi\bra\bold c\ket.\end{split}\end{equation}

Similarly to the case $j=1$, for each
$x\in
    \Pi\bra\bold c\ket\cap  \Delta_{n_1\cdots n_{j+1}}^{r_1\cdots r_{j+1}}$ we have $x_{n+1}=x_1\in\mathbb N^*$ by the definition \eqref{Delta} and
    $x_{n_{j+1}+r_{j+1}}\in\mathbb N^*$ by Lemma~\ref{gamma-easy}(b).
    Hence
    there exist $\bold a\in W_{j,1}$, $\bold b\in W_{j,2}$
such that
$[\bold a]=[x_1\cdots x_{n_{j+1}+r_{j+1}-1}]$ and
$[\bold b]=[x_{n_{j+1}+r_{j+1}}\cdots x_n]$.
We have
$\bra\bold a\ket\subset\bra\bold c\ket$ and
$x\in\Pi\bra\bold a\bold b\ket$, and by \eqref{equation1} and \eqref{equation2},
\[\exp S_n\phi(x)\ll
e^{\gamma_0n}\lambda_\phi\bra\bold a\ket\lambda_\phi \bra\bold b\ket.\]
 Summing this inequality over all $x\in
    \Pi\bra\bold c\ket\cap  \Delta_{n_1\cdots n_{j+1}}^{r_1\cdots r_{j+1}}$, and
    then using 
$\sum_{\stackrel{\bold a\in W_{j,1} }{\bra\bold a\ket\subset\bra\bold c\ket}}\lambda_\phi\bra\bold a\ket\leq\lambda_\phi\bra\bold c\ket$ 
and $\sum_{\bold b\in W_{j,2} }\lambda_\phi \bra\bold b\ket \leq \delta^{2r_{j+1}}$ which follow 
from Lemma~\ref{disjoint}(b) and  \eqref{deltas} respectively,
we obtain
\begin{equation}\label{expo-eq51}\begin{split}
    \sum_{x\in
    \Pi\bra\bold c\ket\cap  \Delta_{n_1\cdots n_{j+1}}^{r_1\cdots r_{j+1}} }\exp S_n\phi(x)
&\ll e^{\gamma_0n}\sum_{\stackrel{\bold a\in W_{j,1} }{\bra\bold a\ket\subset\bra\bold c\ket}}\lambda_\phi\bra\bold a\ket
\sum_{\bold b\in W_{j,2} }
\lambda_\phi\bra\bold b\ket\\
&\leq
e^{\gamma_0n}\lambda_\phi\bra\bold c\ket\delta^{2r_{j+1}}.
 \end{split}\end{equation}
 Combining \eqref{expo-eq60} and \eqref{expo-eq51} yields 
\[\frac{\sum_{x\in\Pi\bra\bold c\ket\cap   \Delta_{n_1\cdots n_{j+1}}^{r_1\cdots r_{j+1}} }\exp S_n\phi(x )}{
\sum_{x\in\Pi\bra\bold c\ket\cap  \Delta_{n_1\cdots n_j}^{r_1\cdots r_j} }\exp S_n\phi(x)}\leq\delta^{r_{j+1}},\]
provided 
$\delta$ is small enough.
Rearranging this inequality and summing the result over all $\bold c\in W_{j,3}$ yields 
\[\sum_{x\in \Delta_{n_1\cdots n_{j+1}}^{r_1\cdots r_{j+1}}  }\exp S_n\phi(x )\leq\delta^{r_{j+1}}\sum_{x \in \Delta_{n_1\cdots n_j}^{r_1\cdots r_j}  }\exp S_n\phi(x ).\]
Applying this inequality recursively and combining the final result with \eqref{i-step} yields the desired inequality in Lemma~\ref{measure}.\end{proof}

Returning to the proof of Proposition~\ref{expo}, 
for two integers $R\geq m$ and $s\in\{1,\ldots,m\}$ we denote by $K_{R,s}$
the cardinality of the set of elements $(n_1\cdots n_s,r_1\cdots r_s)$ of
$\mathbb N_0^s\times\mathbb N^s$ with
$0\leq n_1<\cdots< n_s< n$ and $r_1+\cdots+r_s=R$.
The number of ways of locating $n_1,\ldots,n_s$
in $[0,n]$ does not exceed
$\left(\begin{smallmatrix}n\\s\end{smallmatrix}\right)$, and
for each location $(n_1,\ldots,n_s)$
the number of all feasible combinations of  $(r_1,\ldots,r_s)$ with $r_1+\cdots+r_s=R$ is bounded by the number of ways
of dividing $R$ objects into $s$ groups, not exceeding
$\left(\begin{smallmatrix}R+s-1\\s-1\end{smallmatrix}\right)\leq 2^{R+s-1}$.
This yields $K_{R,s}\leq\left(\begin{smallmatrix}n\\
s\end{smallmatrix}\right)\left(\begin{smallmatrix}R+s-1\\
s-1\end{smallmatrix}\right)\leq 2^n2^{R+s-1}$. 
From Lemma~\ref{gamma-easy}(a)(c), 
for each $x\in E_n\cap \Sigma$ satisfying
$V_n(x)(X\setminus\Gamma)=m/n$
there exist $R\in\{m,\ldots,2n\}$, $s\in\{1,\ldots,m\}$,
  $(n_1\cdots n_s,r_1\cdots r_s)\in K_{R,s}$ such that
$x\in \Delta_{n_1\cdots n_s}^{r_1\cdots r_s}$.
If $\delta\in (0,1/5]$ is sufficiently small, then
together with  Lemma~\ref{measure}
we obtain
\[\begin{split}\sum_{\stackrel{x\in E_n\cap \Sigma}{V_n(x)(X\setminus\Gamma)=m/n}   }\exp S_n\phi(x)&\leq
\sum_{s=1}^m\sum_{R=m }^{2n} \sum_{(n_1\cdots n_s,r_1\cdots r_s)\in K_{R,s}}\sum_{x\in \Delta_{n_1\cdots n_s}^{r_1\cdots r_s} }\exp S_n\phi(x)\\
&\leq 2^ne^{\gamma_0n}\sum_{s=1}^m
\sum_{R=m}^{2n} 2^{R+s-1}\delta^R\leq2^ne^{\gamma_0n}\sum_{R=m}^{2n} (4\delta)^R\\
&\leq\frac{2^ne^{\gamma_0n}(4\delta)^m}{1-4\delta}.\end{split}\]
From this and \eqref{step1}, the desired inequality
 \eqref{desired-ineq} follows and
the proof of Proposition~\ref{expo} is complete.
\end{proof}

\subsection{Verifying exponential tightness}\label{dedic}
We now use Proposition~\ref{expo} to show the exponential tightness.

\begin{prop}\label{e-tight}Let
$\phi\colon X\to\mathbb R$ be acceptable such that $P(\phi)<\infty$ and let $(\Sigma,\tau|_{\Sigma})$ be an induced system.
If there exists a local Gibbs state for the potential $\phi$ associated with $(\Sigma,\tau|_{\Sigma})$, 
then
$\{\tilde\mu_n\}_{n=1}^\infty$ is exponentially tight.
\end{prop}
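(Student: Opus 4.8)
The plan is to deduce the exponential tightness from Proposition~\ref{expo} together with Prohorov's characterization of relatively compact subsets of $\mathcal M(X)$. The starting point is that $\tilde\mu_n$ is a genuine Borel probability measure, so $0<Z_n(\phi)<\infty$; moreover the constant sequence $\overline{1}=(1,1,\ldots)$ belongs to $E_n$, which yields the crude lower bound $Z_n(\phi)\geq\exp(S_n\phi(\overline{1}))=e^{n\phi(\overline{1})}$. Writing $c_0:=-\phi(\overline{1})\in\mathbb R$, this gives $Z_n(\phi)^{-1}\leq e^{nc_0}$ for every $n$. Let $\delta'\in(0,1/5]$, $n_0\geq1$ and $\gamma_0\in\mathbb R$ be the constants furnished by Proposition~\ref{expo} for the given local Gibbs state $\lambda_\phi$. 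Since proving the required inequality for a given $L$ also establishes it, with the same compact set, for every smaller value, I may and do assume $L$ is so large that $M:=L+1+\log 2+\gamma_0+c_0$ satisfies $M>0$ and $\tfrac14 e^{-M}\leq\delta'$.

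Next I would construct the compact set. For $j\geq1$ set $\delta_j:=\tfrac14 e^{-jM}$; then $\delta_j\in(0,\delta']$ and $4\delta_j=e^{-jM}$. Since the $1$-cylinders $\bra\bold a\ket$, $\bold a\in\mathbb A$, partition $\mathbb A^{\mathbb N}$ and each of them satisfies $\Pi\bra\bold a\ket\subset[p]$ for a unique $p\in\mathbb N^*$, we have
\[\sum_{k\geq1}\sum_{\stackrel{\bold a\in\mathbb A}{\Pi\bra\bold a\ket\subset[k]}}\lambda_\phi\bra\bold a\ket=\sum_{\bold a\in\mathbb A}\lambda_\phi\bra\bold a\ket=1,\]
so the tails of this series vanish; hence for each $j\geq1$ I can choose a non-decreasing sequence of integers $\{N_i^{(j)}\}_{i=1}^\infty$ with $N_1^{(j)}\geq\max\mathbb N_*$ and $\sum_{k=N_i^{(j)}+1}^\infty\sum_{\bold a\in\mathbb A,\,\Pi\bra\bold a\ket\subset[k]}\lambda_\phi\bra\bold a\ket\leq\delta_j^{2i}$ for all $i\geq1$. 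Let $\Gamma_j$ be the compact subset of $X$ attached to $\{N_i^{(j)}\}_{i=1}^\infty$ through \eqref{Gamma}, and put
\[\mathcal K:=\bigcap_{j=1}^\infty\left\{\mu\in\mathcal M(X)\colon\mu(X\setminus\Gamma_j)\leq\tfrac1j\right\}.\]
Each set in the intersection is closed, because $X\setminus\Gamma_j$ is open and hence $\mu\mapsto\mu(X\setminus\Gamma_j)$ is lower semicontinuous; and the family $\mathcal K$ is tight, because the $\Gamma_j$ are compact in $X$ and $1/j\to0$. By Prohorov's theorem $\mathcal K$ is therefore compact.

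It then remains to estimate $\tilde\mu_n(\mathcal M(X)\setminus\mathcal K)\leq\sum_{j=1}^\infty\tilde\mu_n(\{\mu\colon\mu(X\setminus\Gamma_j)>1/j\})$. For $x\in E_n$ the quantity $m:=nV_n(x)(X\setminus\Gamma_j)$ is an integer in $\{0,\ldots,n\}$, and $m>n/j$ forces $m\geq\lfloor n/j\rfloor+1\geq n/j$. Fixing $n\geq n_0$ and feeding the bound \eqref{desired-ineq} of Proposition~\ref{expo} (with $\delta=\delta_j$ and $\{N_i\}=\{N_i^{(j)}\}$) into the definition \eqref{nu_n}, then summing the resulting geometric series over $m$, I obtain
\[\tilde\mu_n\bigl(\{\mu\colon\mu(X\setminus\Gamma_j)>1/j\}\bigr)\leq\frac{e^{nc_0}\,2^n\,n\,e^{\gamma_0 n}}{(1-4\delta_1)^2}\,(4\delta_j)^{\lfloor n/j\rfloor+1}.\]
For $j\leq n$ one has $(4\delta_j)^{\lfloor n/j\rfloor+1}\leq(4\delta_j)^{n/j}=e^{-nM}$, so each such term is at most $(1-4\delta_1)^{-2}\,n\,e^{-(L+1)n}$, and there are at most $n$ of them; for $j>n$ I would instead use the crude bound $m\geq1$ together with $\sum_{j>n}e^{-jM}\leq(1-e^{-M})^{-1}e^{-nM}$. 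Adding the two contributions gives $\tilde\mu_n(\mathcal M(X)\setminus\mathcal K)\leq C(L)\,n^2\,e^{-(L+1)n}$ for all $n\geq n_0$, with $C(L)<\infty$; hence $\limsup_{n\to\infty}\tfrac1n\log\tilde\mu_n(\mathcal M(X)\setminus\mathcal K)\leq-(L+1)<-L$, which is what exponential tightness requires.

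I expect the genuine difficulty to be the simultaneous bookkeeping of the two summations: the sum over the index $j$ labelling the infinitely many compact constraints that cut out $\mathcal K$, and the sum over the escape count $m$ inside Proposition~\ref{expo}. The device that reconciles them is to tune the geometric decay rate of $\delta_j$ in $j$ to be exactly the number $M=L+1+\log 2+\gamma_0+c_0$: then for the $O(n)$ ``small'' indices $j\leq n$ the constraint $m\geq n/j$ already produces the factor $e^{-nM}$, while the ``large'' indices $j>n$ are handled by the crude estimate $m\geq1$ together with the summability $\sum_{j>n}e^{-jM}=O(e^{-nM})$, and in both regimes the residual exponential rate is precisely $-(L+1)$. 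A secondary point to check along the way is that condition \eqref{deltas} can indeed be met for every $j$; this is immediate once one notices that the double series there, extended over all $k\geq1$, is just $\sum_{\bold a\in\mathbb A}\lambda_\phi\bra\bold a\ket=1$, so its tails vanish.
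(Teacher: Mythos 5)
Your proof is correct, and while it shares the same overall Sanov-type skeleton as the paper's (build compact sets of measures by uniformly constraining mass on a nested family of compacts $\Gamma_j$, then invoke Proposition~\ref{expo} and Prohorov's theorem), the execution differs in two genuine ways. First, the paper passes from the constraint $V_n(x)(X\setminus\Gamma_\ell)\geq 1/\ell$ to an estimate via a Chebyshev/exponential moment trick, pairing each $\ell$ with a $\delta_\ell$ chosen so that $\sum_m e^{2\ell^2 m}(4\delta_\ell)^m$ converges; you instead observe directly that the constraint forces the escape count $m$ to exceed $n/j$, sum the geometric series from $\lfloor n/j\rfloor+1$, and split the sum over $j$ into the ranges $j\leq n$ (where $m\geq n/j$ already yields rate $e^{-nM}$) and $j>n$ (where summability of $e^{-jM}$ takes over). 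This is more elementary and makes the tuning of $\delta_j=\tfrac14 e^{-jM}$ against $L$ transparent. Second, you replace the paper's use of $\lim_n\tfrac1n\log Z_n(\phi)=P(\phi)$ with the cruder but self-contained bound $Z_n(\phi)\geq e^{n\phi(\overline 1)}$ from the single fixed point, which is all that is needed here. A minor stylistic difference is that the paper's compact set $\mathcal K_L$ is indexed directly by $L$ as an intersection from $\ell=L$ onward, while your $\mathcal K$ is an intersection from $j=1$ onward whose dependence on $L$ is hidden in the rate $M$; both are tight by the same Prohorov argument. All the steps you flag as potential gaps — the vanishing tails that permit choosing $\{N_i^{(j)}\}$, the floor arithmetic $m\geq\lfloor n/j\rfloor+1\geq n/j$, the uniform bound $(1-4\delta_j)^{-2}\leq(1-4\delta_1)^{-2}$, the reduction to large $L$ — check out.
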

\begin{proof}
The argument below is an
adaptation of the proof of Sanov's
theorem (see e.g., \cite{DemZei98}) to our setup.
For each integer $\ell\geq1$, we fix $\delta_\ell\in(0,1/5]$ 
such that 
\begin{equation}\label{theta}\frac{1}{1-4\delta_\ell}\sum_{m=1}^\infty e^{2\ell^2m}(4\delta_\ell)^m\leq1.\end{equation}
We 
 fix a non-decreasing integer sequence $\{N_i\}_{i=1}^\infty$ such that \eqref{N1} and \eqref{deltas} with $\delta=\delta_\ell$ hold.
We define a compact subset $\Gamma_\ell=\Gamma(\{N_i\}_{i=1}^\infty)$ of $X$ by \eqref{Gamma}, and set
\[\mathcal K^\ell=\left\{\nu\in\mathcal M(X)\colon \nu(\Gamma_\ell)\geq1-\frac{1}{\ell}\right\}.\]
Since $\mathcal M(X)$ is a Polish space and $\Gamma_\ell$ is a closed set, 
the weak* convergence
$\mu_k\to\mu$ for a sequence $\{\mu_k\}_{k=1}^\infty$ in $\mathcal K^\ell$
implies
$\limsup_{k\to\infty}\mu_k(\Gamma_\ell)\leq\mu(\Gamma_\ell)$.
Hence, $\mathcal K^\ell$ is a closed set.
For an integer $L\geq1$ we define
$$\mathcal K_L=\bigcap_{\ell=L}^\infty \mathcal K^\ell.$$
This set is tight, and by Prohorov's theorem any sequence in $\mathcal K_L$ has a limit point.
Hence $\mathcal K_L$ is sequentially compact.
Since the weak* topology is metrizable with the bounded Lipschitz metric \cite{DemZei98}, 
$\mathcal K_L$ is compact.
By Chebyshev's inequality, for $n\geq1$
we have
\[\begin{split}
\sum_{\stackrel{x\in E_n }{\exp\left(\ell^2n V_n(x)(X\setminus\Gamma_\ell)\right)\geq e^{\ell n}}}\exp S_n\phi(x )
\leq& e^{-2\ell n}\times\\
\sum_{\stackrel{x\in E_n}{V_n(x)(X\setminus\Gamma_\ell)\geq1/n} }&\exp\left(2\ell^2n V_n(x)(X\setminus\Gamma_\ell)\right)\exp S_n\phi(x)\\
=&e^{-2\ell n}\sum_{m=1}^{n}e^{2\ell^2m}
\sum_{\stackrel{x\in E_n }
{V_n(x)(X\setminus\Gamma) =m/n}    }\exp S_n\phi(x).
\end{split}\]
Combining this inequality with Proposition~\ref{expo} and \eqref{theta}, we have
\[\begin{split}
\sum_{\stackrel{x\in E_n }{V_n(x)\notin\mathcal K^\ell}}\exp S_n\phi(x)&=\sum_{\stackrel{x\in E_n }{V_n(x)(X\setminus\Gamma_\ell)\geq1/\ell}}\exp S_n\phi(x)
=\sum_{\stackrel{x\in E_n }{\exp\left(\ell^2n V_n(x)(X\setminus\Gamma_\ell)\right)\geq e^{\ell n}}}\exp S_n\phi(x )\\
&\leq \frac{2^nne^{\gamma_0n}}{1-4\delta_\ell}e^{-2\ell n}\sum_{m=1}^{n}e^{2\ell^2m}
(4\delta_\ell)^m\leq 2^nne^{\gamma_0n}e^{-2\ell n}.
\end{split}\]
If $L\geq1$ is large enough, then
\[\begin{split}\tilde\mu_n(\mathcal M(X)\setminus\mathcal K_L)&\leq\sum_{\ell=L}^\infty\tilde\mu_n(
\mathcal M(X)\setminus\mathcal K^\ell)=\sum_{\ell=L}^\infty\frac{1}{Z_n(\phi) }\sum_{\stackrel{x\in E_n }{V_n(x)\notin\mathcal K^\ell}}\exp S_n\phi(x)\\
&\leq\frac{2^nne^{\gamma_0n}}{Z_{n}(\phi)}\sum_{\ell=L}^{\infty} e^{-2\ell n}\leq \frac{2^ne^{\gamma_0n}}{Z_{n}(\phi)}e^{-Ln}.\end{split}\]
Combining this with the equality 
$\lim_{n\to\infty}(1/n)\log Z_n(\phi)=P(\phi)<\infty$
which follows from the acceptability of $\phi$, we obtain
\[\limsup_{n\to\infty}\frac{1}{n}\log \tilde\mu_n(\mathcal M(X )\setminus\mathcal K_L)\leq-L+\gamma_0+\log2.\]
Since $L\geq1$ can be made arbitrarily large, the exponential tightness of $\{\tilde\mu_n\}_{n=1}^\infty$ follows.
\end{proof}

\subsection{Existence of a local Gibbs state}\label{existence}
The next proposition ensures the existence of a local Gibbs state under the assumption of Theorem~A.
\begin{prop}\label{cylinder-m} 
Let
$\phi\colon X\to\mathbb R$, and let $(\Sigma,\tau|_{\Sigma})$ be an induced system for which
the induced potentials 
$\Phi_{\gamma}$, $\gamma\in\mathbb R$ associated with $\phi$ are locally H\"older continuous, 
and 
  there exists $\gamma_0\in\mathbb R$ such that $P(\Phi_{\gamma_0})=0$. 
There exists a $\theta$-invariant local Gibbs state for the potential $\phi$ associated with $(\Sigma,\tau|_{\Sigma})$.
\end{prop}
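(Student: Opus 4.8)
The plan is to pull the thermodynamic formalism for the countable full shift back through the conjugacy $\Pi$ of Section~\ref{symbolic}, and then convert the resulting Gibbs inequality on $\mathbb A^{\mathbb N}$ into \eqref{induce-gibbs} by means of a telescoping identity for Birkhoff sums over return times. First I would transport the induced potential: set $\hat\Phi=\Phi_{\gamma_0}\circ\Pi\colon\mathbb A^{\mathbb N}\to\mathbb R$. By the definition of local H\"older continuity of $\Phi_{\gamma_0}$ through the conjugacy $\Pi$, the function $\hat\Phi$ is locally H\"older continuous on $\mathbb A^{\mathbb N}$, and $P(\hat\Phi)=P(\Phi_{\gamma_0})=0$, where $P(\hat\Phi)$ is the Gurevich pressure of the countable full shift $(\mathbb A^{\mathbb N},\theta)$. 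Since $\mathbb A^{\mathbb N}$ is a full shift it has the BIP (finite irreducibility) property, and $P(\hat\Phi)<\infty$ forces $\hat\Phi$ to be summable; hence by the thermodynamic formalism for countable Markov shifts (see \cite[Theorem~2.7.3]{MauUrb03}, or \cite{Sar99,Sar03}) there exist a $\theta$-invariant Borel probability measure $\lambda_\phi$ on $\mathbb A^{\mathbb N}$ and a constant $C\geq1$ such that for every $n\geq1$, every $\bold a\in\mathbb A^n$ and every $z\in\bra\bold a\ket$,
\[C^{-1}\leq\frac{\lambda_\phi\bra\bold a\ket}{\exp\bigl(\sum_{k=0}^{n-1}\hat\Phi(\theta^kz)-nP(\hat\Phi)\bigr)}\leq C,\]
and since $P(\hat\Phi)=0$ the denominator simplifies to $\exp\bigl(\sum_{k=0}^{n-1}\hat\Phi(\theta^kz)\bigr)$.

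Next I would rewrite this exponent in terms of the original potential $\phi$. Fix $\bold a\in\mathbb A^n$, $z\in\bra\bold a\ket$, and put $x=\Pi(z)\in\Pi\bra\bold a\ket\subset\Sigma$. From $\Pi\circ\theta=\tau|_{\Sigma}\circ\Pi$ and the definition \eqref{ind-po} of $\Phi_{\gamma_0}$,
\[\sum_{k=0}^{n-1}\hat\Phi(\theta^kz)=\sum_{k=0}^{n-1}\Phi_{\gamma_0}(\tau^kx)=\sum_{k=0}^{n-1}S_{R(\tau^kx)}\phi(\tau^kx)-\gamma_0\sum_{k=0}^{n-1}R(\tau^kx).\]
Writing $\tau^kx=\sigma^{m_k}x$ with $m_k=\sum_{i=0}^{k-1}R(\tau^ix)$ (so $m_0=0$), the first sum on the right telescopes to $S_{m_n}\phi(x)$, and, as noted in the proof of Lemma~\ref{disjoint}(a), the total return time $m_n=\sum_{k=0}^{n-1}R(\tau^kx)$ equals $|\bold a|$ on $\Pi\bra\bold a\ket$. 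Hence $\sum_{k=0}^{n-1}\hat\Phi(\theta^kz)=S_{|\bold a|}\phi(x)-\gamma_0|\bold a|$, and the displayed Gibbs inequality becomes precisely \eqref{induce-gibbs} with the same constant $C$ and the same $\gamma_0$. Since $n$ and $\bold a$ were arbitrary and $\lambda_\phi$ is $\theta$-invariant, $\lambda_\phi$ is a $\theta$-invariant local Gibbs state for $\phi$ associated with $(\Sigma,\tau|_{\Sigma})$.

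The only genuinely non-routine step is the invocation of the countable-state thermodynamic formalism: one must confirm that the hypotheses for the existence of an \emph{invariant} Gibbs measure — local H\"older regularity, finite Gurevich pressure, and the BIP property — are met by $\hat\Phi$ on $\mathbb A^{\mathbb N}$, and, crucially, that the two-sided Gibbs bound is available for the $\theta$-invariant Gibbs measure rather than merely for the Ruelle conformal eigenmeasure; this last point rests on the invariant density produced by the Ruelle operator being bounded above and below, which holds for the full shift. Everything else — identifying $P(\hat\Phi)$ with $P(\Phi_{\gamma_0})$, the telescoping of Birkhoff sums over return times, and tracking the constant — is bookkeeping already prepared by the symbolic coding of Section~\ref{symbolic} and Lemma~\ref{disjoint}.
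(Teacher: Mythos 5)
Your proposal is correct and follows essentially the same route as the paper: pull $\Phi_{\gamma_0}$ back to $\mathbb A^{\mathbb N}$ via $\Pi$, use finiteness of the pressure on the full shift to get summability, invoke Mauldin--Urba\'nski (the paper cites Corollary~2.7.5 rather than Theorem~2.7.3, but the content is the same) to obtain a $\theta$-invariant Bowen's Gibbs state, and then telescope the Birkhoff sums over return times to recover \eqref{induce-gibbs}. The telescoping identity $\sum_{k=0}^{n-1}\Phi_{\gamma_0}(\tau^k x)=S_{|\bold a|}\phi(x)-\gamma_0|\bold a|$ is exactly the computation the paper uses.
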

\begin{proof} Note that $P(\Phi_{\gamma_0}\circ\Pi)=P(\Phi_{\gamma_0})=0$. Since $\mathbb A^{\mathbb N}$ is the countable full shift, the finiteness of $P(\Phi_{\gamma_0}\circ\Pi)$ implies the summability of the  potential $\Phi_{\gamma_0}\circ\Pi$. By \cite[Corollary~2.7.5]{MauUrb03} together with 
the summability and the local H\"older continuity of $\Phi_{\gamma_0}\circ\Pi$, 
there exists a unique $\theta$-invariant Bowen's Gibbs state 
for the potential $\Phi_{\gamma_0}\circ \Pi$, which we denote by $\lambda_\phi$.
There exists $C\geq1$ such that
for every $m\geq1$, any
$\bold a\in{\mathbb A}^m$ and any
 $z\in[\![\bold a]\!]$ 
 we have
\[\begin{split}
C^{-1}\leq\frac{\lambda_\phi[\![\bold a]\!]}{\exp\left(-P(\Phi_{\gamma_0}\circ\Pi)m+\sum_{k=0}^{m-1}
\Phi_{\gamma_0  }\circ\Pi(\theta^k z)\right)}&\leq C.
\end{split}\]
For the series in the denominator of the fraction, for  $x\in\Pi[\![\bold a]\!]$ we have
\[\begin{split}\sum_{k=0}^{m-1}\Phi_{\gamma_0 }\circ\Pi(\theta^k\Pi^{-1}(x)) 
&=S_{\sum_{k=0}^{m-1}R(\tau^k x)}\phi( x)-\gamma_0\sum_{k=0}^{m-1}R(\tau^k x)\\
&=S_{|\bold a| }\phi( x)-
\gamma_0|\bold a|.\end{split}\]
Substituting this and $P(\Phi_{\gamma_0}\circ\Pi)=0$ into the denominator of the fraction implies that
 $\lambda_\phi$ is a local Gibbs state for the potential $\phi$.
\end{proof}
\begin{remark}\label{rem-equi}
Under the assumption and notation of Proposition~\ref{cylinder-m} and its proof, if $\gamma_0=P(\phi)$ 
and $\int R{\rm d}(\lambda_\phi\circ\Pi^{-1})<\infty$, then
the measure 
\[\frac{1}{\int R{\rm d}(\lambda_\phi\circ\Pi^{-1})}\sum_{n=0}^{\infty}
(\lambda_\phi\circ\Pi^{-1})|_{\{R >n\}}\circ\sigma^{-n}\]
is in $\mathcal M_\phi(X,\sigma)$, and it is an equilibrium state for the potential $\phi$. 
\end{remark}

\section{Proofs of the main results}
In this section we complete the proofs of all the theorems.
In Sections~\ref{low-sub} and \ref{key-upbd}, we prove lower and upper bounds
for certain fundamental open and closed subsets of $\mathcal M(X)$ respectively. In Section~\ref{pfthma}, we combine these bounds and the exponential tightness verified in Section~2 to complete the proof of Theorem~A.
In Section~\ref{pfthmc} we complete the proof of Theorem~B.
In view of applications, 
in Section~\ref{sufficience}
we give a sufficient condition for the vanishing of the pressure of the induced potential that is assumed in Theorem~A. 
Using this condition, we complete the proof of Theorem~C
 in Section~\ref{alt-proof}. In Section~\ref{sec-gen} we mention some generalizations.
\subsection{Lower bound for fundamental open sets}\label{low-sub}
We introduce notations in this and the next two subsections.
Let $C_u(X)$ denote the set of real-valued bounded uniformly continuous functions on $X$. For an integer $\ell\geq1$ we
define
\[
 C_u(X)^\ell=\{\vec\varphi=(\varphi_1,\ldots,\varphi_\ell)\colon\varphi_j\in C_u(X)\text{ for every }j\in\{1,\ldots,\ell\}\}.
\]
For $\vec{\varphi}=(\varphi_1,\ldots,\varphi_\ell)\in C_u(X)^{\ell}$, $\vec{\alpha}=(\alpha_1,\ldots,\alpha_\ell)\in\mathbb R^\ell$
and $\mu\in\mathcal M(X)$, 
 the expression 
$\int\vec{\varphi} {\rm d}\mu>\vec{\alpha}$  indicates that $\int\varphi_j {\rm d}\mu>\alpha_j$ holds for all $j\in\{1,\ldots,\ell\}$.
The meaning of $\int\vec{\varphi} {\rm d}\mu\geq\vec{\alpha}$ is analogous.
Put
$\|\vec \alpha\|=\max_{1\leq j\leq\ell}|\alpha_j|$.
For $\varepsilon\in\mathbb R$ we write 
$
\vec{\varepsilon}=(\varepsilon,\ldots,\varepsilon)\in\mathbb R^\ell.$
For $n\geq1$ and $p_1\cdots p_n\in\mathbb N^n$, let $\overline{p_1\cdots p_n}$ denote the element of $E_n$ that is contained in $[p_1\cdots p_n]\subset X$.

\begin{prop}\label{lowlem}
Let 
$\phi\colon X\to\mathbb R$ be acceptable and satisfy $P(\phi)<\infty$.  Let $\ell\geq1$, $\vec\varphi\in C_u(X)^\ell$ and $\vec\alpha\in\mathbb R^\ell$.
Let $\mathcal G\subset\mathcal M(X)$ be an open set of the form
\[\mathcal G=\left\{\mu\in\mathcal M(X)\colon \int\vec\varphi {\rm d}\mu>\vec\alpha\right\}.\]
For any measure $\mu\in\mathcal M_\phi(X,\sigma)\cap\mathcal G$, we have
\[\liminf_{n\to\infty}\frac{1}{n}\log
\tilde\mu_n(\mathcal G)\geq F_\phi(\mu).\]
\end{prop}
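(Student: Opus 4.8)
The plan is to reduce everything to a count of periodic orbits. By definition $\tilde\mu_n(\mathcal G)=Z_n(\phi)^{-1}\sum_{x\in E_n,\,V_n(x)\in\mathcal G}\exp S_n\phi(x)$, and the acceptability of $\phi$ gives $\frac1n\log Z_n(\phi)\to P(\phi)$ (this limit was already used in the proof of Proposition~\ref{e-tight}), so it suffices to produce, for each small $\epsilon>0$ and all large $n$, a family of periodic points $x\in E_n$ with $V_n(x)\in\mathcal G$ whose total weight $\sum\exp S_n\phi(x)$ is at least $e^{n(h(\mu)+\int\phi\,{\rm d}\mu-\epsilon)}$. I would establish this first for $\mu$ ergodic, and then reduce the general case to it.

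For $\mu$ ergodic, fix $\delta>0$ with $\int\varphi_j\,{\rm d}\mu>\alpha_j+2\delta$ for every $j$, and fix a finite coarsening $\xi$ of the partition of $X$ into $1$-cylinders with $h(\mu,\xi)>h(\mu)-\epsilon$, so that the Shannon--McMillan--Breiman theorem applies to $\xi$. Combining that theorem with the Birkhoff ergodic theorem for $\phi$ (here $\int\phi\,{\rm d}\mu>-\infty$ is used) and for the $\varphi_j$, and Egorov's theorem, I obtain a Borel set $G$ with $\mu(G)>1/2$ and an $n_0$ such that for $n\geq n_0$ every $x\in G$ has its $\xi$-$n$-cylinder of $\mu$-measure $<e^{-n(h(\mu)-2\epsilon)}$, has $S_n\phi(x)>n(\int\phi\,{\rm d}\mu-\epsilon)$, and has $S_n\varphi_j(x)>n(\alpha_j+\delta)$ for all $j$. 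Since the $\xi$-$n$-cylinders meeting $G$ cover $G$, there are at least $\tfrac12 e^{n(h(\mu)-2\epsilon)}$ of them; choosing one $x\in G$ in each and reading off the word $x_1\cdots x_n$ gives that many pairwise distinct $n$-words $p_1\cdots p_n$, hence that many distinct periodic points $\overline{p_1\cdots p_n}$. It remains to transfer the three properties from $x$ to $\overline{p_1\cdots p_n}$, which lies in the same $X$-$n$-cylinder as $x$: uniform continuity of $\phi$ and of the $\varphi_j$ handles all but boundedly many terms of each Birkhoff sum, and on those remaining terms the acceptability of $\phi$ (respectively the boundedness of $\varphi_j$) bounds the discrepancy by a constant per term, so the errors are $o(n)$. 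Hence for $n$ large $V_n(\overline{p_1\cdots p_n})\in\mathcal G$ and $S_n\phi(\overline{p_1\cdots p_n})\geq n(\int\phi\,{\rm d}\mu-\epsilon)-o(n)$; summing weights and letting $\epsilon\downarrow0$ gives the bound for ergodic $\mu$.

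For general $\mu\in\mathcal M_\phi(X,\sigma)\cap\mathcal G$, I would take the ergodic decomposition $\mu=\int\rho\,{\rm d}\tau(\rho)$ and approximate $\mu$ by a finite convex combination $\mu'=\sum_{i=1}^k c_i\nu_i$ of ergodic measures $\nu_i$ from the support of $\tau$, with $\int\phi\,{\rm d}\nu_i>-\infty$ and $h(\nu_i)<\infty$, chosen so that $\mu'\in\mathcal G$, $h(\mu')=\sum c_i h(\nu_i)>h(\mu)-\epsilon$ and $\int\phi\,{\rm d}\mu'>\int\phi\,{\rm d}\mu-\epsilon$; here the affineness of entropy gives $h(\mu)=\int h\,{\rm d}\tau$, and a Lusin-type truncation is used to keep the entropy and the $\phi$-integral from dropping under the finite approximation. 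Running the generic-point estimate above separately for each $\nu_i$ produces, for $m$ large, at least $\tfrac12 e^{m(h(\nu_i)-\epsilon)}$ words of length $m$ along which the normalized Birkhoff sums of $\phi$ and of the $\varphi_j$ are within $\epsilon$ of their $\nu_i$-integrals. Since $X$ is the full shift, any concatenation of one such word of length $\lfloor c_1N\rfloor$ for $\nu_1$, one of length $\lfloor c_2N\rfloor$ for $\nu_2$, and so on (the last block taking up the remaining length), is an admissible $N$-word; the corresponding periodic point has empirical measure within $o(1)$ of $\mu'$ (hence in $\mathcal G$ once $N$ is large) and $S_N\phi\geq N(\int\phi\,{\rm d}\mu'-\epsilon)-o(N)$, the extra error coming only from the $k$ block junctions and being controlled exactly as before. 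Distinct concatenations yield distinct $N$-cylinders, so there are at least $\prod_{i=1}^k\tfrac12 e^{\lfloor c_iN\rfloor(h(\nu_i)-\epsilon)}\geq e^{N(h(\mu')-2\epsilon)}$ periodic points of the required kind, whence $\liminf_{N}\frac1N\log\tilde\mu_N(\mathcal G)\geq h(\mu')+\int\phi\,{\rm d}\mu'-P(\phi)-O(\epsilon)\geq F_\phi(\mu)-O(\epsilon)$; letting $\epsilon\downarrow0$ finishes the argument.

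The main obstacle is the non-ergodic case, and within it the finite ergodic approximation: keeping $h(\mu')$ close to $h(\mu)$ is delicate because on $\mathcal M(X,\sigma)$ the entropy is neither upper nor lower semicontinuous here, so one cannot simply invoke continuity of $\rho\mapsto h(\rho)$; and one must also check that the $O(k)$ Birkhoff-sum errors at the block junctions of the concatenation do not spoil the lower bound on $S_N\phi$ even when $\phi$ is unbounded below (this is where acceptability, together with $\sup\phi<\infty$, is used once more). A further, more routine, technical point throughout is that $S_n\phi$ and $S_n\varphi_j$ depend on infinitely many coordinates, so the generic set $G$ is not a union of cylinders; one must pass through Egorov's theorem and the acceptability estimate to move from a generic set of points to the genuine periodic points $\overline{p_1\cdots p_n}$.
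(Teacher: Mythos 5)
Your proof is correct, and for ergodic $\mu$ it is essentially the paper's argument: Shannon--McMillan--Breiman plus Birkhoff produce roughly $e^{nh(\mu)}$ cylinders of length $n$ on which the normalized Birkhoff sums of $\phi$ and the $\varphi_j$ are close to their integrals, and acceptability of $\phi$ together with uniform continuity of the $\varphi_j$ transfers these estimates to the periodic points $\overline{p_1\cdots p_n}$ (your extra care with a finite coarsening $\xi$ of the $1$-cylinder partition and with Egorov is a sensible way to make the SMB step rigorous on a countable alphabet). Where you genuinely diverge is the non-ergodic case: the paper disposes of it by quoting Lemma~\ref{nonergodic}, an approximation of any $\mu\in\mathcal M_\phi(X,\sigma)$ by a single ergodic measure with nearly the same entropy, $\phi$-integral and $\vec\varphi$-integrals, whose proof is imported verbatim from \cite{Tak}; you instead re-derive what is needed by taking a finite convex combination $\mu'=\sum c_i\nu_i$ from the ergodic decomposition and concatenating generic blocks for the $\nu_i$ into a single periodic word, using the full-shift structure. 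This is essentially an inline proof of the content of the cited lemma, and it works: the block-junction errors are $O(k)$ terms each bounded by the acceptability constant, hence $o(N)$, and distinct concatenations give distinct $N$-cylinders, so the counts multiply. The one step you leave thinnest --- choosing the finite family $\nu_i$ so that $\sum c_ih(\nu_i)>h(\mu)-\varepsilon$ and $\sum c_i\int\phi\,{\rm d}\nu_i>\int\phi\,{\rm d}\mu-\varepsilon$ --- does not actually require any (semi)continuity of entropy, only the affineness $h(\mu)=\int h(\rho)\,{\rm d}\tau(\rho)$ and integrability of $\rho\mapsto h(\rho)$ and $\rho\mapsto\int(\sup\phi-\phi)\,{\rm d}\rho$, which follow from $h(\mu)<\infty$ and $\int\phi\,{\rm d}\mu>-\infty$; partitioning the ergodic components by the approximate values of these functionals and discarding a set of small $\tau$-mass does the job. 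So your route is self-contained where the paper's is a citation, at the cost of carrying out this decomposition argument explicitly.
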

\begin{proof}
By virtue of the definition of the pressure $P(\phi)$, it suffices to show that
\begin{equation}\label{desired-low}\liminf_{n\to\infty}\frac{1}{n}\log
\sum_{\stackrel{x\in E_n}{V_n(x)\in\mathcal G} }\exp S_n\phi(x)\geq h(\mu)+\int\phi {\rm d}\mu.\end{equation}
The proof of \cite[Main~Theorem]{Tak} works verbatim to show the next lemma that
approximates non-ergodic measures with ergodic ones in a particular sense.

\begin{lemma}\label{nonergodic}
For any $\mu\in\mathcal M_\phi(X,\sigma)$ and any $\varepsilon>0$,
there exists an ergodic measure $\mu'\in\mathcal M_\phi(X,\sigma)$ which
is supported on a compact set and satisfies
\[|h(\mu)-h(\mu')|<\varepsilon,\
\left\|\int\vec\varphi {\rm d}\mu-\int\vec\varphi {\rm d}\mu'\right\|<\varepsilon
\ \text{ and }\ 
\left|\int\phi {\rm d}\mu-\int\phi {\rm d}\mu'\right|<\varepsilon.\]
\end{lemma}
\noindent 
By Lemma~\ref{nonergodic}, it suffices to show \eqref{desired-low}
for all $\mu\in\mathcal M_\phi(X,\sigma)$ which is ergodic.
Let $\varepsilon>0$ be such that \begin{equation}\label{qe0}\int\vec\varphi {\rm d}\mu>\vec\alpha+\vec\varepsilon.\end{equation}
Since each component of $\vec\varphi$ is bounded uniformly continuous and $\phi$ is acceptable,
one can use Birkhoff's ergodic theorem and Shannon-McMillan-Breiman's theorem to show that
for any sufficiently large $n\geq1$ there is
a finite subset $G_n$
of $\mathbb N^n$ such that
\begin{equation}\label{qe1}
 \left|\frac{1}{n}\log\#G_n- h(\mu)\right|<\frac{\varepsilon}{2},\end{equation}
 and for every $p_1\cdots p_n\in G_n$,
\begin{equation}\label{qe2-1}
\sup_{x\in[p_1\cdots p_n]}\left\|\int\vec\varphi {\rm d}V_n(x)- \int\vec\varphi {\rm d}\mu\right\|<\frac{\varepsilon}{2}\ \text{ and}\end{equation}
\begin{equation}\label{qe2-2} 
\sup_{x\in[p_1\cdots p_n]}\left|\frac{1}{n}S_n\phi(x)- \int\phi {\rm d}\mu\right|<\frac{\varepsilon}{2}.\end{equation}
Then \eqref{qe0} and \eqref{qe2-1} yield $\int\vec\varphi {\rm d}V_n(\overline{p_1\cdots p_n})>\vec\alpha$, and 
\eqref{qe2-2} yields
 $(1/n)S_n\phi(\overline{p_1\cdots p_n})>\int\phi {\rm d}\mu-\varepsilon/2.$
  Therefore
 \[\begin{split} \sum_{\stackrel{x\in E_n}{V_n(x)\in\mathcal G} }\exp S_n\phi(x)&\geq \sum_{p_1\cdots p_n\in G_n} \exp S_n\phi(\overline{p_1\cdots p_n})\geq \#G_n \exp\left(n\int\phi {\rm d}\mu-\frac{\varepsilon n}{2}\right).\end{split}\]
Taking logarithms, dividing by a sufficiently large $n$
and using \eqref{qe1} we have
\[\begin{split}\frac{1}{n}\log \sum_{\stackrel{ x\in E_n}{V_n(x)\in\mathcal G } }\exp S_n\phi( x)&\geq \frac{1}{n}\log\#G_n+ \int\phi {\rm d}\mu
-\frac{\varepsilon}{2}> h(\mu)+\int\phi {\rm d}\mu-\varepsilon.\end{split}\]
Letting $n\to\infty$ and then $\varepsilon\to0$ yields \eqref{desired-low}. The proof of Proposition~\ref{lowlem} is complete.
\end{proof}

\subsection{Upper bound for fundamental closed sets}\label{key-upbd}
We proceed to 
upper bounds on fundamental closed sets which are not necessarily compact. 
\begin{prop}\label{uplem}
Let 
$\phi\colon X\to\mathbb R$ be acceptable and satisfy $P(\phi)<\infty$. 
Let $\ell\geq1$, $\vec\varphi\in C_u(X)^\ell$, $\vec\alpha\in\mathbb R^\ell$
and let $\mathcal C\subset\mathcal M(X)$ be a non-empty closed set of the form
\[\mathcal C=\left\{\mu\in\mathcal M(X)\colon \int\vec\varphi {\rm d}\mu\geq\vec\alpha\right\}.\]
For any $\varepsilon>0$ there exists $\mu\in\mathcal M_\phi(X,\sigma)$ such that
$\int\vec\varphi {\rm d}\mu>\vec\alpha-\vec\varepsilon$ and
\[
\limsup_{n\to\infty}\frac{1}{n}\log\tilde\mu_n(\mathcal C)\leq F_\phi(\mu).\]
\end{prop}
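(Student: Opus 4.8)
The plan is to adapt the classical proof of the variational principle (as the introduction indicates), realised here through exponential tilting followed by convex duality. Writing $\langle\vec t,\vec\varphi\rangle=\sum_{j=1}^{\ell}t_j\varphi_j$ for $\vec t=(t_1,\dots,t_\ell)$, and noting that $V_n(x)\in\mathcal C$ holds exactly when $\frac1n S_n\varphi_j(x)\ge\alpha_j$ for all $j$, for every $\vec t\in[0,\infty)^{\ell}$ one has the exponential Chebyshev bound
\[\sum_{\stackrel{x\in E_n}{V_n(x)\in\mathcal C}}e^{S_n\phi(x)}\ \le\ e^{-n\langle\vec t,\vec\alpha\rangle}\,Z_n\bigl(\phi+\langle\vec t,\vec\varphi\rangle\bigr).\]
Since each $\varphi_j$ is bounded and uniformly continuous, $\phi+\langle\vec t,\vec\varphi\rangle$ is again acceptable and its pressure is finite (bounded by $P(\phi)$ plus a constant times $\|\vec t\|$), so $\frac1n\log Z_n(\phi+\langle\vec t,\vec\varphi\rangle)\to P(\phi+\langle\vec t,\vec\varphi\rangle)$ and $\frac1n\log Z_n(\phi)\to P(\phi)$, both limits being valid for acceptable potentials of finite pressure. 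Taking $\frac1n\log$, letting $n\to\infty$, and then taking the infimum over $\vec t\ge0$ gives
\[\limsup_{n\to\infty}\frac1n\log\tilde\mu_n(\mathcal C)\ \le\ \inf_{\vec t\ge0}\Bigl(P\bigl(\phi+\langle\vec t,\vec\varphi\rangle\bigr)-\langle\vec t,\vec\alpha\rangle\Bigr)-P(\phi).\]

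The second step is to rewrite each pressure on the right by the variational principle on the full shift, $P(\phi+\langle\vec t,\vec\varphi\rangle)=\sup_{\mu\in\mathcal M_\phi(X,\sigma)}\bigl(h(\mu)+\int\phi\,{\rm d}\mu+\langle\vec t,\int\vec\varphi\,{\rm d}\mu\rangle\bigr)$, where the domain is $\vec t$-independent because the $\varphi_j$ are bounded. The quantity to be dominated then becomes $\inf_{\vec t\ge0}\sup_{\mu}L(\mu,\vec t)-P(\phi)$ with $L(\mu,\vec t)=h(\mu)+\int\phi\,{\rm d}\mu+\langle\vec t,\int\vec\varphi\,{\rm d}\mu-\vec\alpha\rangle$, affine in each of its two arguments separately. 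As soon as this infimum and supremum are interchanged one has $\sup_{\mu}\inf_{\vec t\ge0}L(\mu,\vec t)=\sup\{\,h(\mu)+\int\phi\,{\rm d}\mu:\mu\in\mathcal M_\phi(X,\sigma),\ \int\vec\varphi\,{\rm d}\mu\ge\vec\alpha\,\}$, and any measure attaining this constrained supremum (up to an error absorbed by the $\vec\varepsilon$-slack) simultaneously satisfies $\int\vec\varphi\,{\rm d}\mu\ge\vec\alpha>\vec\alpha-\vec\varepsilon$ and $F_\phi(\mu)=h(\mu)+\int\phi\,{\rm d}\mu-P(\phi)\ge\limsup_{n\to\infty}\frac1n\log\tilde\mu_n(\mathcal C)$, which is the assertion. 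An essentially equivalent route, closer to the proof of the variational principle, is to build the witness $\mu$ as a weak$^*$ limit point of the almost-invariant measures $\frac1n\sum_{j=0}^{n-1}\nu_n\circ\sigma^{-j}$, where $\nu_n$ charges the $n$-cylinders meeting $\mathcal C$ with weights $e^{S_n\phi}$; there uniform continuity of the $\varphi_j$ makes $\int\vec\varphi\,{\rm d}\mu\ge\vec\alpha$ automatic and the role of duality is played by the standard entropy lower bound.

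The main obstacle is precisely this interchange of infimum and supremum — equivalently, the passage from a pressure inequality to an actual realising measure. The space $\mathcal M_\phi(X,\sigma)$ is not weak$^*$ compact and $h$ is neither upper nor lower semicontinuous on it, so Sion's minimax theorem does not apply off the shelf, and a maximising sequence of invariant measures can in principle carry both mass and entropy to infinity. My plan is a finite-alphabet exhaustion $X_A=A^{\mathbb N}$ with $A\subset\mathbb N$ finite and $A\uparrow\mathbb N$: on each compact subsystem $h$ is upper semicontinuous and affine and $\mathcal M(X_A,\sigma)$ is compact convex, so Sion's theorem yields $\mu_A\in\mathcal M(X_A,\sigma)$ with $\int\vec\varphi\,{\rm d}\mu_A\ge\vec\alpha$ realising the subsystem value $\inf_{\vec t\ge0}(P_A(\phi+\langle\vec t,\vec\varphi\rangle)-\langle\vec t,\vec\alpha\rangle)$, $P_A$ being the pressure of $X_A$. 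It then remains to check that these values converge to $\inf_{\vec t\ge0}(P(\phi+\langle\vec t,\vec\varphi\rangle)-\langle\vec t,\vec\alpha\rangle)$ as $A\uparrow\mathbb N$ — which should follow from $P_A\uparrow P$ together with a Dini-type uniformity obtained after shrinking $\vec\alpha$ to $\vec\alpha-\vec\varepsilon'$ (with $\vec 0<\vec\varepsilon'<\vec\varepsilon$), so that the relevant infima are attained over one common compact set of multipliers — and, more delicately, that a weak$^*$ limit point $\mu$ of $\{\mu_A\}$ stays a probability measure with $F_\phi(\mu)\ge\limsup_A F_\phi(\mu_A)$, i.e., that no mass or entropy escapes along this particular sequence. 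I expect the latter to be the genuine sticking point and would attack it with a Lyapunov-type truncation estimate adapted to acceptable potentials, exploiting that constrained near-maximisers cannot concentrate where $\phi$ is very negative. The degenerate case in which $\tilde\mu_n(\mathcal C)=0$ for all large $n$, so that the left-hand side equals $-\infty$, is treated separately.
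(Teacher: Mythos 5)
Your first two steps --- the exponential Chebyshev (tilting) bound and the rewriting of each tilted pressure via the variational principle --- are correct and standard, and you correctly identify the real obstacle: passing from
\[
\limsup_{n\to\infty}\tfrac1n\log\tilde\mu_n(\mathcal C)\le\inf_{\vec t\ge 0}\sup_{\mu}L(\mu,\vec t)-P(\phi)
\]
to an actual witness measure requires a minimax interchange that Sion's theorem does not supply here, since $\mathcal M_\phi(X,\sigma)$ is not weak$^*$ compact and $h$ is not upper semicontinuous on it. But the proposed repair is not carried out. You assert three things you do not prove: the Dini-type uniformity in $\vec t$ after shrinking $\vec\alpha$; the convergence of the constrained subsystem optima as $A\uparrow\mathbb N$; and, most critically, that a weak$^*$ limit point of the constrained maximisers $\mu_A$ loses neither mass nor entropy. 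You flag the last as ``the genuine sticking point'' and say you ``would attack it with a Lyapunov-type truncation,'' but this is speculation rather than argument, and it is not clear it can work: a Lyapunov bound for acceptable $\phi$ can control mass escape, but entropy can still leak to infinity along a maximising sequence of invariant measures on larger and larger alphabets. As written, the key step is a gap, not a proof.

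The paper never attempts a minimax interchange, and so avoids the problem altogether. Its Lemma~\ref{horse} works at a single scale $n$: it takes the finitely many $n$-cylinders $H_n$ whose periodic representatives lie in $\mathcal C$, forms the \emph{finite} full shift $H_n^{\mathbb N}$ (with shift $\sigma^n|_\Lambda$ and potential $S_n\phi$), and invokes the classical variational principle on that \emph{compact} system, where the supremum is genuinely attained by some $\hat\mu_0$. The pushdown $\mu_0=\frac1n\sum_{j=0}^{n-1}\hat\mu_0\circ\sigma^{-j}$ is $\sigma$-invariant, lies in $\mathcal M_\phi(X,\sigma)$, and satisfies the needed entropy-plus-integral bound up to $D_n(\phi)=o(n)$. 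Crucially, the constraint $\int\vec\varphi\,{\rm d}\mu_0>\vec\alpha-\vec\varepsilon$ is \emph{automatic}: $\mu_0$ is supported on cylinders all of whose $n$-block empirical averages lie in $\mathcal C$, and uniform continuity of $\vec\varphi$ converts this to the $\varepsilon$-relaxed constraint. No dual variable $\vec t$, no constrained optimisation, and no limit of measures across scales ever appears. That is the idea your approach is missing; if you insist on the duality route, you must actually supply the no-escape-of-entropy estimate you identify, and nothing in the proposal does so.
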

\begin{proof}
A main ingredient is the next lemma, the proof of which is analogous to the standard proof of the variational principle \cite{Wal82}. 
For $n\geq1$ we put
\[D_n(\phi)=\sup_{p_1\cdots p_n\in \mathbb N^n}\sup_{x, y\in[p_1\cdots p_n]}S_n\phi(x)-S_n\phi(y).\]
 \begin{lemma}\label{horse}
For any $\varepsilon>0$
 there exists $n_0\geq1$ such that if $n\geq n_0$ then for any non-empty 
 finite subset $H_n$ of $\mathbb N^n$ satisfying
 $V_n(\overline{p_1\cdots p_n})\in\mathcal C$ for all $p_1\cdots p_n\in H_n$,
there exists a measure $\mu_0\in\mathcal M_\phi(X,\sigma)$ such that 
\[ \log\sum_{p_1\cdots p_n\in H_n}\sup_{[p_1\cdots p_n]}\exp S_n\phi\leq \left(h(\mu_0)+\int\phi {\rm d}\mu_0\right)n+D_n(\phi) \quad\text{and}\quad
    \int\vec\varphi {\rm d}\mu_0>\vec\alpha-\vec\varepsilon.\]
 \end{lemma}
 \begin{proof}
 Since all components of $\vec\varphi$ are bounded uniformly continuous functions on $X$, for any $\varepsilon>0$ there exists $n_0\geq1$ such that if $n\geq n_0$ then for any $p_1\cdots p_n\in\mathbb N^n$ satisfying $V_n(\overline{p_1\cdots p_n})\in\mathcal C$, 
 $\int\vec\varphi {\rm d}V_n(x)\geq\vec\alpha-(1/2)\vec\varepsilon$ holds for any
$x\in[p_1\cdots p_n]$.
In what follows we assume $n\geq n_0$.
 
   We set \[\Lambda=\bigcap_{k=0}^\infty  \sigma^{-nk}\left(\bigcup_{p_1\cdots p_n\in H_n}[p_1\cdots p_n]\right).\]
Then $\sigma^n|_\Lambda\colon\Lambda\to\Lambda$ is topologically 
 conjugate to the left shift acting on the finite full shift space \[H_n^{\mathbb N}=\{(q_m)_{m=1}^\infty\colon q_m\in H_n\text{ for every }m\geq1\}.\]
 Since the function $\hat\phi=S_n\phi$
 induces a continuous potential on $H_n^{\mathbb N}$, 
 the variational principle \cite{Wal82}
 yields
 \[
 \begin{split}\sup_{\hat\mu\in\mathcal M(\Lambda,\sigma^n|_\Lambda)}\left( h(\hat\mu)+\int \hat\phi {\rm d}\hat\mu\right)&=
 \lim_{m\to\infty}\frac{1}{m}\log\sum_{q_1\cdots q_m\in H_n^m}
\sup_{ [q_1\cdots q_m]} \left(\exp{\sum_{k=0}^{m-1}
 \hat\phi\circ\sigma^{nk} }\right),\end{split}\]
   where $\mathcal M(\Lambda,\sigma^n|_\Lambda)$ denotes 
 the space of $\sigma^n|_\Lambda$-invariant Borel probability measures
 endowed with the weak* topology,
 and $h(\hat\mu)$ denotes 
 the measure-theoretic entropy of $\hat\mu\in \mathcal M(\Lambda,\sigma^n|_\Lambda)$
 with respect to $\sigma^n|_\Lambda$.
For the right-hand side, we have
 \[
 \begin{split}
 \sum_{q_1\cdots q_m\in H_n^m}
\sup_{[q_1\cdots q_m]} \exp\left({\sum_{k=0}^{m-1}
 \hat\phi\circ\sigma^{nk}}\right)
&\geq\left(\sum_{p_1\cdots p_n\in H_n}\inf_{[p_1\cdots p_n]}\exp S_n\phi\right)^m\\
&\geq\left( \exp(-D_n(\phi))\sum_{p_1\cdots p_n\in H_n}
\sup_{[p_1\cdots p_n]}\exp S_n\phi\right)^m.
\end{split}\]
Taking logarithms of both sides, 
dividing by $m$ and then letting $m\to\infty$ gives
\[\lim_{m\to\infty}\frac{1}{m}\log
\sum_{q_1\cdots q_m\in H_n^m}
\sup_{[q_1\cdots q_m]} \exp\left(\sum_{k=0}^{m-1}
 \hat\phi\circ\sigma^{nk}\right)
\geq\log\sum_{p_1\cdots p_n\in H_n}
\sup_{[p_1\cdots p_n]}\exp S_n\phi-D_n(\phi).\]
Plugging this into the previous inequality yields
\[\sup_{\hat\mu\in\mathcal M(\Lambda,\sigma^n|_\Lambda)}\left( h(\hat\mu)+\int \hat\phi {\rm d}\hat\mu\right)\geq
\log\sum_{p_1\cdots p_n\in H_n}
\sup_{[p_1\cdots p_n]}\exp S_n\phi-D_n(\phi).\]
By the compactness of the space $\mathcal M(\Lambda,\sigma^n|_\Lambda)$ and the upper semicontinuity of the map $\hat\mu\mapsto  h(\hat\mu)+\int \hat\phi {\rm d}\hat\mu$ on this space, the supremum is attained, say by $\hat\mu_0$. 
The measure
$\mu_0 = (1/n)\sum_{j=0}^{n-1}\hat\mu_0\circ \sigma^{-j}$ is in $\mathcal M_\phi(X,\sigma)$
and satisfies 
\[\left(h(\mu_0)+\int\phi {\rm d}\mu_0\right)n=\sup_{\hat\mu\in\mathcal M(\Lambda,\sigma^n|_\Lambda)}\left( h(\hat\mu)+\int \hat\phi {\rm d}\hat\mu\right).\]
Since the support of $\mu_0$
is contained in set $\{x\in X\colon \int\vec\varphi {\rm d}V_n(x)>\vec\alpha-\vec\varepsilon/2\}$ by the choice of $n_0$ and the assumption $n\geq n_0$, we obtain $\int\vec\varphi {\rm d}\mu_0>\vec\alpha-\vec\varepsilon$ as required.
 \end{proof}
Continuing the proof of Proposition~\ref{uplem}, 
we note that the acceptability of $\phi$ and $P(\phi)<\infty$ implies $Z_n(\phi)<\infty$ for every $n\geq1$. Hence it is possible to choose
 a finite subset $H_n$ of the countable set
$\left\{p_1\cdots p_n\in\mathbb N^n\colon V_n(\overline{p_1\cdots p_n})\in \mathcal C\right\}$
such that
\[\sum_{\stackrel{p_1\cdots p_n\in\mathbb N^n}{V_n(\overline{p_1\cdots p_n})\in \mathcal C} }
\exp S_n\phi(\overline{p_1\cdots p_n})\leq 2\sum_{p_1\cdots p_n\in H_n}\exp S_n\phi(\overline{p_1\cdots p_n}).\]
By this inequality and Lemma~\ref{horse}, there exists $\mu_0\in\mathcal M_\phi(X,\sigma)$ such that
$\int\vec\varphi {\rm d}\mu_0>\vec\alpha-\vec\varepsilon$
and
\[\begin{split}
\log \sum_{\stackrel{x\in E_n }{V_n(x)\in\mathcal C} }
\exp S_n\phi(x)&=\log\sum_{\stackrel{p_1\cdots p_n\in\mathbb N^n}{V_n(\overline{p_1\cdots p_n})\in \mathcal C} }
\exp S_n\phi(\overline{p_1\cdots p_n})\\
&\leq\log\sum_{p_1\cdots p_n\in H_n }
\exp S_n\phi(\overline{p_1\cdots p_n})+\log2\\
&\leq  \log\sum_{p_1\cdots p_n\in H_n}\sup_{[p_1\cdots p_n]}\exp S_n\phi+\log2\\
&\leq \left(h(\mu_0)+\int\phi {\rm d}\mu_0\right)n+D_n(\phi)+\log2.
\end{split}\]
Since $\phi$ is acceptable, 
we have $D_n(\phi)=o(n)$ $(n\to\infty)$.
Dividing both sides of the last inequality by $n$, letting $n\to\infty$ and combining the result with
 $P(\phi)=\lim_{n\to\infty}(1/n)\log Z_n(\phi)$
 yields the desired inequality.
\end{proof}

\subsection{Proof of Theorem~A}\label{pfthma}
 Let
$\phi\colon X\to\mathbb R$ be acceptable and satisfy $P(\phi)<\infty$. Suppose there exists
an induced system for which
the induced potentials 
$\Phi_{\gamma}$, $\gamma\in\mathbb R$ associated with $\phi$ are locally H\"older continuous, 
and there exists $\gamma_0\in\mathbb R$ such that $P(\Phi_{\gamma_0})=0$.

Let $\mathcal G$ be a non-empty open subset of $\mathcal M(X)$.
Since subsets of $\mathcal M(X)$ of the form
$\left\{\mu\in\mathcal M(X)\colon\int\vec\varphi {\rm d}\mu>\vec\alpha\right\}$ 
with $\ell\geq1$, $\vec{\varphi}\in C_u(X)^{\ell}$, $\vec{\alpha}\in\mathbb R^\ell$
constitute a base of the weak* topology of $\mathcal M(X)$,
 $\mathcal G$ is written as the union 
$\mathcal G=\bigcup_{\lambda}\mathcal G_\lambda$ of sets of this form. 
For each $\mathcal G_\lambda$, Proposition~\ref{lowlem} gives
\[\liminf_{n\to\infty}\frac{1}{n}\log \tilde\mu_n(\mathcal G_\lambda)
\geq \sup_{\mathcal G_\lambda}F_\phi,\]
and
hence 
\[\liminf_{n\to\infty}\frac{1}{n}\log\tilde\mu_n(\mathcal G)
\geq\sup_{\lambda}\sup_{\mathcal G_\lambda} F_\phi= \sup_{\mathcal{G}} F_\phi= -\inf_{\mathcal G} I_\phi,
\]
as required in \eqref{LDPlow}.

Let $\mathcal C$ be a compact closed  
subset of $\mathcal M(X)$.
Let $\mathcal G$ be an arbitrary open set containing $\mathcal C$.
Since $\mathcal M(X)$ is metrizable by the bounded Lipschitz metric
and $\mathcal C$ is compact, we can choose $\varepsilon>0$ and finitely many closed sets $\mathcal{C}_1,\ldots,\mathcal C_s$ of the form 
$\mathcal C_k=\left\{\mu\in\mathcal M(X)\colon \int\vec\varphi_k {\rm d}\mu\geq\vec\alpha_k\right\}$
with $\ell_k\geq1$, $\vec\varphi_k\in C_u(X)^{\ell_k}$, $\vec\alpha_k\in\mathbb R^{\ell_k}$,
so that 
$\mathcal C \subset \bigcup_{k=1}^s \mathcal C_k \subset 
\bigcup_{k=1}^s \mathcal C_k(\varepsilon)\subset \mathcal G
$
where
$\mathcal C_k(\varepsilon)=\{\mu\in\mathcal M(X)\colon \int\vec\varphi_k {\rm d}\mu>\vec\alpha_k-\vec{\varepsilon}\}$.
By Lemma~\ref{horse} and 
$F_\phi\leq -I_\phi$, for $1\leq k\leq s$
 we have
\[\limsup_{n\to\infty}\frac{1}{n}\log\tilde\mu_n(\mathcal C_k)\leq -\inf_{\mathcal C_k(\varepsilon)}I_\phi
+\varepsilon.\]  
Then we have
\[
\limsup_{n\to\infty}\frac{1}{n}\log\tilde\mu_n(\mathcal C)
 \le \max_{1\le k\le s}  
\left(-\inf_{\mathcal C_k(\varepsilon)}I_\phi\right)+\varepsilon \le
-\inf_{\mathcal G}I_\phi+\varepsilon.
\]
Since $\varepsilon>0$ is arbitrary and
$\mathcal G$ is an arbitrary open set containing 
 $\mathcal C$, it follows that
\[
\limsup_{n\to\infty}\frac{1}{n}\log\tilde\mu_n(\mathcal C)\leq
\inf_{\mathcal G \supset \mathcal C} \left(-\inf_{\mathcal G} I_\phi\right)=
-\inf_{\mathcal C}I_\phi,\]
as required in \eqref{LDPup}. The last equality is due to the lower semicontinuity of $I_\phi$.

Since $\{\tilde\mu_n\}_{n=1}^\infty$ is exponentially tight by Proposition~\ref{e-tight}, by the standard arguments as in \cite{DemZei98}, 
the upper bound \eqref{LDPup} holds for any non-compact closed subset of $\mathcal M(X)$, and  $I_\phi$ is a good rate function.  This completes the proof of Theorem~A.\qed

\subsection{Proof of Theorem~B}\label{pfthmc}
 Let
$\phi\colon X\to\mathbb R$ be acceptable and satisfy $P(\phi)<\infty$. Suppose there exists
an induced system for which
the associated induced potentials 
$\Phi_{\gamma}$, $\gamma\in\mathbb R$ are locally H\"older continuous, 
and 
  there exists $\gamma_0\in\mathbb R$ such that $P(\Phi_{\gamma_0})=0$. 
  Assume that the minimizer of the rate function $I_\phi$ in \eqref{ratefcn} is unique, denoted by $\mu_{\rm min}$.
   Let $\{\tilde{\mu}_{n(j)}\}_{j=1}^\infty$
be an arbitrary convergent subsequence of $\{\tilde\mu_n\}_{n=1}^\infty$ with the limit measure $\tilde\mu$.
  It suffices to show that $\tilde\mu$ is the unit point mass at $\mu_{\min}$.
  
We fix a metric that generates the weak* topology on $\mathcal M(X)$. 
Since the rate function $I_\phi$ in \eqref{ratefcn} is the good rate function
by Theorem~A,
for any $\alpha>0$ the level set \[\mathcal L(\alpha)=\{\mu\in\mathcal M(X)\colon I_\phi(\mu)\leq\alpha\}\] is a compact set.
Let $\mu\in\mathcal M(X)\setminus\{\mu_{\rm min} \}$. By the lower semicontinuity
of the rate function and $I_\phi(\mu)>0$, there exists $r>0$ such that the closure of the 
 open ball $\mathcal B(r;\mu)$ of radius $r$ about $\mu$ in $\mathcal M(X)$
 does not intersect the level set $\mathcal L(I_\phi(\mu)/2)$.
The weak* convergence
$\tilde{\mu}_{n(j)}\to\tilde\mu$ gives
\[\tilde\mu(\mathcal B(r;\mu))\leq\liminf_{j\to\infty}\tilde\mu_{n(j)}(\mathcal B(r;\mu))\leq\limsup_{j\to\infty}\tilde\mu_{n(j)}(\overline{\mathcal B(r;\mu)}).\] By this and the large deviations upper bound for closed sets \eqref{LDPup},
we have
\[\tilde\mu(\mathcal B(r;\mu))\leq\limsup_{j\to\infty}\exp\left(-n(j)\inf_{\overline{\mathcal B(r;\mu)}}I_\phi \right)\leq\limsup_{j\to\infty}\exp\left(-\frac{n(j)}{2}I_\phi(\mu) \right)=0.\]
Hence, the support of $\tilde\mu$ does not contain $\mu$. Since
 $\mu$ is an arbitrary element of
 $\mathcal M(X)\setminus\{\mu_{\rm min} \}$, it follows that $\tilde\mu$ is the unit point mass at $\mu_{\min}$.
  This completes the proof of Theorem~B.
 \qed

\subsection{Sufficient condition for vanishing of pressure}\label{sufficience} 
A direct check of the condition $P(\Phi_{\gamma_0})=0$ in Theorem~A may be cumbersome, while checking the finiteness of induced pressures is considered to be easier.  
In view of applications, 
we give a sufficient condition for the second assumption in Theorem~A on the induced potential. 
\begin{lemma}\label{P-zero}Let
$\phi\colon X\to\mathbb R$, let
 $(\Sigma,\tau|_\Sigma)$ be an induced system and let
$\Phi_\gamma\colon \Sigma\to\mathbb R$ $(\gamma\in\mathbb R)$ be the associated family of induced potentials. If $\Phi_\gamma$, $\gamma\in\mathbb R$ are acceptable and
 there exists $\delta\in\mathbb R$ such that $0< P(\Phi_{\delta})<\infty$, then
  there exists $\gamma_0\in\mathbb R$ such that $P(\Phi_{\gamma_0})=0$.
 \end{lemma}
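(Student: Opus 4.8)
The plan is to treat $\gamma \mapsto P(\Phi_\gamma)$ as a function on $\mathbb{R}$ and show it takes the value $0$ by an intermediate value argument. The first step is to record the monotonicity and continuity of this function. Since $\Phi_\gamma(x) = S_{R(x)}\phi(x) - \gamma R(x)$ and $R \geq 1$, increasing $\gamma$ decreases $\Phi_\gamma$ pointwise, and in fact $\Phi_{\gamma'} \leq \Phi_\gamma - (\gamma' - \gamma)$ whenever $\gamma' > \gamma$ because $R \geq 1$; passing to the symbolic model $\mathbb{A}^{\mathbb{N}}$ via $\Pi$ and using the definition of pressure as a limit of $\frac{1}{n}\log$ of partition sums, one gets $P(\Phi_{\gamma'}) \leq P(\Phi_\gamma) - (\gamma' - \gamma)$. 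Hence $\gamma \mapsto P(\Phi_\gamma)$ is strictly decreasing (with slope at most $-1$) wherever it is finite, and it is also convex and continuous on the interior of the interval where it is finite, by the standard fact that pressure is a convex function of the potential.

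Second, I would use the hypothesis $0 < P(\Phi_\delta) < \infty$ to locate a point where the pressure is negative or zero. From the Lipschitz bound above, for any $t > 0$ we have $P(\Phi_{\delta + t}) \leq P(\Phi_\delta) - t$, so choosing $t = P(\Phi_\delta)$ gives $P(\Phi_{\delta + t}) \leq 0$. If equality holds we are done with $\gamma_0 = \delta + t$; otherwise $P(\Phi_{\delta+t}) < 0$, and since $P(\Phi_\delta) > 0$, the function $\gamma \mapsto P(\Phi_\gamma)$ takes a positive value at $\delta$ and a negative value at $\delta + t$.

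Third, I would invoke continuity on the interval $[\delta, \delta+t]$ to find $\gamma_0 \in (\delta, \delta+t)$ with $P(\Phi_{\gamma_0}) = 0$ by the intermediate value theorem. The point that needs care — and what I expect to be the main obstacle — is justifying that $P(\Phi_\gamma)$ is finite and continuous for all $\gamma$ in this closed interval, not just at the endpoint $\delta$. Finiteness at the right endpoint already follows from $P(\Phi_{\delta+t}) \leq 0$ together with $P(\Phi_{\delta+t}) > -\infty$; the latter holds because the induced system is (conjugate to) a full shift, so its pressure is never $-\infty$, just as noted for $P(\phi)$ in the excerpt. For intermediate $\gamma \in (\delta, \delta+t)$, finiteness follows from convexity: the pressure of $\Phi_\gamma$ is bounded above by the linear interpolation between the (finite) values at the two endpoints. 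Continuity on the open interval is then automatic from convexity (a finite convex function on an interval is continuous on its interior), and one-sided continuity at the endpoints, which is all the intermediate value theorem needs, follows from monotone convergence of the defining partition sums — or one can simply apply the intermediate value theorem on a slightly smaller closed subinterval $[\delta, \delta + t - \epsilon]$ where both endpoints lie in the interior of finiteness, after first checking $P(\Phi_{\delta+t-\epsilon}) < 0$ for small $\epsilon$ using the Lipschitz bound once more. Acceptability of the $\Phi_\gamma$ is what guarantees the pressures are defined by the usual limit and that these soft arguments (monotonicity, convexity, the $-\infty$ exclusion) all go through.
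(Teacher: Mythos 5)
Your argument is essentially the paper's: the same Lipschitz bound $P(\Phi_{\gamma'}) \leq P(\Phi_\gamma) - (\gamma'-\gamma)$ for $\gamma'>\gamma$ (coming from $R\geq 1$, i.e.\ $|\mathbf{a}|\geq n$ for $\mathbf{a}\in\mathbb{A}^n$), the same deduction that $\gamma\mapsto P(\Phi_\gamma)$ strictly decreases from a positive value past zero, and the same intermediate value theorem conclusion. The only divergence is how continuity is justified --- the paper simply cites the variational principle of Mauldin--Urba\'nski, while you invoke convexity of the pressure functional and then handle the endpoint of the finiteness interval by passing to a slightly smaller subinterval; both routes are standard and sound.
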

\begin{proof} 
 Let $n\geq1$.
 Since $|\bold a|\geq n$
 for any $\bold a\in\mathbb A^n$ 
 for any $\gamma>\delta$ we have
\[\sum_{\bold a\in\mathbb A^n}\sup_{[\bold a]}\exp(
 S_{| \bold a|}\phi-\gamma|\bold a|)\leq\exp(-(\gamma-\delta) n)\sum_{\bold a\in\mathbb A^n}\sup_{[\bold a]}\exp(
 S_{|\bold a|}\phi-\delta|\bold a|).\]
 Taking logarithms, dividing by $n$ and letting $n\to\infty$ yields
 $P(\Phi_{\gamma})\leq-\gamma+\delta+P(\Phi_{\delta})$. 
 Hence, the function
$\gamma\in[\delta,\infty)\mapsto P(\Phi_\gamma)$ is strictly decreasing.
 The variational principle \cite[Theorem~2.1.8]{MauUrb03} implies that this function is continuous.
Hence there exists $\gamma_0>\delta$ such that $P(\Phi_{\gamma_0})=0$.
 \end{proof}

\subsection{Proof of Theorem~C}\label{alt-proof}

Recall that the 
R\'enyi map $T$ in \eqref{renyi} is modeled on the countable full shift $X$ via the coding map $\pi$ in \eqref{pi}.
As in Remark~\ref{rem-renyi}, we set $p^*=2$ and
define an inducing scheme $(X^*,R)$ by
\eqref{xstar}, \eqref{def-R}, and
the induced system $(\Sigma,\tau|_{\Sigma})$ by  \eqref{sigmahat}, \eqref{xhat}. We also define the infinite alphabet
$\mathbb A$ and the coding map $\Pi$  by \eqref{ahat}, \eqref{codingmap} respectively, keeping the notation in Section~\ref{symbolic}. We have
 \[\Sigma=\left[\frac{1}{2},1\right)\cap\mathbb I\ \text{ and }\ \mathbb A=\left\{\bigcup_{q=2 }^\infty[p 1^{n-1} q]\colon p\geq2\text{ and } n\geq1\right\}.\]
 Recall that $\mathbb I$ denotes the set of irrational numbers in $(0,1)$.
 For a bounded interval $J\subset\mathbb R$, let $|J|$ denote its Euclidean length. 
Various positive constants which depend only on $T$ will be simply denoted by $C$.
 
For each
  $\bold a\in\mathbb A$ with $\bold a=\bigcup_{q=2 }^\infty[p 1^{n-1} q]$,
 we put 
\[J(\bold a)=T^{-1}\left(\left[\frac{1}{|\bold a|+1},\frac{1}{|\bold a|}\right)\right)\cap J_p.\] 
Then $R$ equals $|\bold a|$ on 
 the set $\Pi\bra\bold a\ket=\pi^{-1}(J(\bold a))$, where $\pi$ denotes the coding map in \eqref{pi}.
There is $C\geq1$ such that for any $\bold a\in W(\mathbb A)$ we have 
\begin{equation}\label{interval}C^{-1}\leq |J(\bold a)|\cdot|\bold a|^2\leq C.
\end{equation}
Define an induced map $U\colon \bigcup_{\bold a\in\mathbb A}
J(\bold a)\to [0,1)$ by $U|_{J(\bold a)}=T^{|\bold a|}|_{J(\bold a)}$.
 Recall that $\phi=-\log|T'\circ\pi|$.
For $\beta,\gamma\in\mathbb R$ define 
$\Phi_{\beta,\gamma}\colon \Sigma\to\mathbb R$ by
\[\Phi_{\beta,\gamma}( x)=\beta S_{R(x)}\phi(x)-\gamma R(x),\]
which is the induced potential associated with $\beta\phi$.



 \begin{lemma}\label{holder}
For all $\beta,\gamma\in\mathbb R$, $\Phi_{\beta,\gamma}$ is locally H\"older continuous with respect to the metric $d_\Sigma$.
\end{lemma}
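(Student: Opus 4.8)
The plan is to estimate the distortion of the inverse branches of the induced map $U$ and then transfer the resulting bound to the symbolic metric $d_\Sigma$ via the conjugacy $\Pi$. First I would recall that a point $z\in\mathbb A^{\mathbb N}$ with $d_{\mathbb A^\mathbb N}(z,w)\le e^{-m}$ means that $z$ and $w$ share their first $m$ symbols $a_1\cdots a_m\in\mathbb A^m$, so the corresponding points $x=\Pi(z)$ and $y=\Pi(w)$ lie in the same cylinder $[\mathbf a]\subset X$ with $\mathbf a=a_1\cdots a_m$; equivalently $\pi(x),\pi(y)$ lie in a single branch domain of $U^m$. Writing $\Phi_{\beta,\gamma}=\beta\,S_R\phi-\gamma R$ and noting $R$ is constant on each set $\Pi\bra a\ket$ (it equals $|a|$), the term $\gamma R$ contributes nothing to the oscillation of $\sum_{k=0}^{m-1}\Phi_{\beta,\gamma}\circ\theta^k$ between $z$ and $w$: only the $\beta S_{|\mathbf a|}\phi$ term matters. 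So the local Hölder estimate for $\Phi_{\beta,\gamma}$ reduces to a bounded-distortion estimate
\[
\bigl|S_{|\mathbf a|}\phi(x)-S_{|\mathbf a|}\phi(y)\bigr|
=\Bigl|\log\frac{|(U^{m})'(\pi(x))|}{|(U^{m})'(\pi(y))|}\Bigr|
\le C' \theta^{\,m}
\]
for some $C'>0$ and $\theta\in(0,1)$, which is exactly local Hölder continuity of $\Phi_{\beta,\gamma}\circ\Pi$ with respect to $d_{\mathbb A^{\mathbb N}}$, hence of $\Phi_{\beta,\gamma}$ with respect to $d_\Sigma$.

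To prove this distortion bound, I would use that away from the neutral fixed point $0$ the map $U$ is uniformly expanding: indeed each branch $U|_{J(\mathbf a)}=T^{|\mathbf a|}|_{J(\mathbf a)}$ maps onto $[0,1)$, and since the induced domain $\Sigma$ is bounded away from $0$ (it sits in $[1/2,1)$), the inverse branches of $U$ are uniform contractions with some factor $\rho<1$ on $[1/2,1)$ — this is where the inducing is doing its work, converting the parabolic map $T$ into a genuinely hyperbolic induced map $U$. Combined with the $C^2$ (in fact real-analytic) regularity of $T$ on each branch, one gets the standard Rényi-type distortion inequality: iterating the mean-value estimate $\log\frac{|T'(\xi)|}{|T'(\eta)|}\le 2|T\xi-T\eta|$ already recorded in the excerpt along the orbit segment of length $|\mathbf a|$, and summing the geometric series in $\rho$, yields $|\log|(U^m)'(\pi x)|-\log|(U^m)'(\pi y)||\le C\,\mathrm{dist}(U^m\pi x,U^m\pi y)\le C\cdot 1$; refining this by carrying $j$ extra common symbols (i.e.\ comparing points in $U^{-(m+j)}$-branches) replaces the crude bound $1$ by $\rho^{\,j}$, giving the claimed exponential decay in the number of shared symbols. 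Then one multiplies through by $|\beta|$; the $\gamma R$ term, being locally constant, is trivially locally Hölder (indeed locally constant functions on a countable full shift are locally Hölder), so $\Phi_{\beta,\gamma}$ is locally Hölder for every $\beta,\gamma$.

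The main obstacle is the bookkeeping in passing between the three metrics and index conventions: the symbol lengths $|\mathbf a|$ in $W(\mathbb A)$ versus $W(\mathbb N)$ differ (an element $a\in\mathbb A$ has the form $\bigcup_{q\ge2}[p1^{n-1}q]$ with $|a|=n$), so one must be careful that "$m$ shared $\mathbb A$-symbols" corresponds to "$|\mathbf a|=\sum_{k\le m}|a_k|\ge m$ shared $\mathbb N$-symbols" and that the contraction rate $\rho$ of $U$ — not of $T$ — is the one that enters, since $T$ itself is not uniformly contracting near $0$. Once the correct exponent is identified (any $\theta\in(0,1)$ of the form $\rho^{\,c}$ works, with $c$ absorbing the alphabet conversion), the estimate is the routine bounded-distortion computation for an expanding interval map with the uniform-contraction and $C^2$-regularity hypotheses, both of which hold here because the inducing domain avoids the parabolic point.
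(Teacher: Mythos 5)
Your overall strategy (a bounded-distortion estimate for the induced map combined with uniform expansion to get the geometric decay) matches the paper's two-step structure, and your handling of the $\gamma R$ term and the uniform expansion of $U$ on $\Sigma\subset[1/2,1)$ is correct. But there is a real gap in the distortion step.

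You attempt to derive the bound on $\left|\log\frac{|(U^m)'(\pi x)|}{|(U^m)'(\pi y)|}\right|$ by "iterating the mean-value estimate $\log\frac{|T'(\xi)|}{|T'(\eta)|}\le 2|T\xi-T\eta|$ along the orbit segment of length $|\mathbf a|$ and summing the geometric series in $\rho$." This geometric-series argument requires each $T$-step in the sum to be uniformly expanding with rate $\rho>1$, but the orbit segment of length $|\mathbf a|$ within a single $\mathbb A$-block spends all but two of its steps inside $J_1$, where $|T'|$ is only weakly larger than $1$ (it approaches $1$ near the neutral fixed point). So the intermediate distances $|T^k\pi x - T^k\pi y|$ for the $J_1$-portion of the orbit are \emph{not} controlled by $\rho^{-(\text{time remaining})}$, and the series you wish to sum is not geometric in $\rho$. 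You do note that "$\rho$ of $U$, not of $T$, is the one that enters," but your computation nevertheless sums over the $|\mathbf a|$-many $T$-iterates, where $\rho$ does not help. What is actually needed to control the sum over the parabolic excursion is a Thaler/Adler-type distortion estimate exploiting the asymptotics $T(\xi)\approx\xi+c\xi^2$ near $0$; the paper obtains exactly this by citing \cite[Lemma~2.2]{Nak00}, which gives directly that $\Phi_{\beta,\gamma}(x)-\Phi_{\beta,\gamma}(y)\le C\beta\,|U(\xi)-U(\eta)|$ for $x,y$ in the same $1$-cylinder. That parabolic distortion lemma is the missing ingredient, and it cannot be replaced by the standard $C^2$-plus-uniform-expansion Rényi-type argument you invoke.

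A secondary issue is that you set up the problem as controlling the $m$-fold Birkhoff sum $\sum_{k=0}^{m-1}\Phi_{\beta,\gamma}\circ\theta^k$, i.e.\ the distortion of $U^m$, whereas local H\"older continuity of $\Phi_{\beta,\gamma}$ itself only requires controlling the one-step potential: for $x,y$ sharing $n$ $\mathbb A$-symbols, $\Phi_{\beta,\gamma}(x)-\Phi_{\beta,\gamma}(y)=\beta\log\frac{|U'(\pi y)|}{|U'(\pi x)|}$ (only the first block length $R(x)=|a_1|$ enters, not $|\mathbf a|$). Moreover your displayed claim $\left|S_{|\mathbf a|}\phi(x)-S_{|\mathbf a|}\phi(y)\right|\le C'\theta^m$ for $x,y$ in a common $m$-cylinder is false as stated: for a locally H\"older potential the variation of the $m$-fold Birkhoff sum over an $m$-cylinder is $O(1)$, not $O(\theta^m)$. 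The version of your refinement that is correct (and does imply the lemma by taking $m=1$) is the bound $O(\rho^{-j})$ when $x,y$ share $m+j$ symbols, but deriving even that still requires the parabolic distortion input you are missing.
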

\begin{proof}
From 
the bounded distortion near the neutral fixed point
\cite[Lemma~2.2]{Nak00}, 
there is $C>0$ such that
 for any $\bold a\in\mathbb A$ and all $x$, $y\in[\bold a]$ we have
\begin{equation}\label{holder-eq1}\Phi_{\beta,\gamma}(x)-\Phi_{\beta,\gamma}(y)\leq C\beta
|U(\xi)-U(\eta)|\leq C\beta,\end{equation}
where $\xi=\pi(x)$ and $\eta=\pi(y)$.
If $x\neq y$
then $d_\Sigma(x,y)=e^{-n}$ holds for some $n\geq2$,
and there exists
 $a_1\cdots a_n\in\mathbb A^n$ such that
$x,y\in\Pi\bra a_1\cdots a_n\ket$.
Since $\inf_{[0,1)\setminus J_1}|T'|\geq\rho$ holds for some $\rho>1$, if $n\geq3$ then we have
\begin{equation}\label{holder-eq2}|U(\xi)-U(\eta)|\leq \frac{|U^{n-1}(\xi )-U^{n-1}(\eta )|}{ \inf_{ \bigcap_{k=2}^{n-1}U^{-k}(J( a_{k}))}|(U^{n-2})'|}\leq \rho^{2-n}.\end{equation}
The local H\"older continuity of 
$\Phi_{\beta,\gamma}$ follows from \eqref{holder-eq1} and \eqref{holder-eq2}.
\end{proof}

\begin{lemma}\label{pr}
For any $\beta\in(1/2,1]$ there exists $\gamma\in\mathbb R$ such that  $0\leq P(\Phi_{\beta,\gamma})<\infty$.
\end{lemma}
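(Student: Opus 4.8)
The plan is to exhibit, for each fixed $\beta\in(1/2,1]$, a value of $\gamma$ at which the induced pressure $P(\Phi_{\beta,\gamma})$ is finite and nonnegative, and then invoke Lemma~\ref{P-zero}. Concretely, the pressure of the induced potential over the countable full shift $\mathbb A^{\mathbb N}$ is computed, as in the definition of $P$, from the quantities $\sum_{\mathbf a\in\mathbb A}\sup_{[\mathbf a]}\exp(\beta S_{|\mathbf a|}\phi-\gamma|\mathbf a|)$. Since $\phi=-\log|T'\circ\pi|$, for $x\in\Pi[\![\mathbf a]\!]$ we have $\exp S_{|\mathbf a|}\phi(x)=|(T^{|\mathbf a|})'(\pi x)|^{-1}$, which by the mean value theorem and the bounded distortion estimate \eqref{interval} is comparable to $|J(\mathbf a)|$, hence to $|\mathbf a|^{-2}$. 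Recalling that $\mathbb A=\{\bigcup_{q\ge2}[p1^{n-1}q]\colon p\ge2,\ n\ge1\}$ and that $|\mathbf a|=n+p-1$ for the alphabet symbol $\mathbf a$ with parameters $(p,n)$, the single-symbol partition sum becomes comparable to
\[
\sum_{p\ge2}\sum_{n\ge1}(n+p-1)^{-2\beta}e^{-\gamma(n+p-1)}.
\]

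First I would show this double series converges for every $\gamma>0$: bounding $(n+p-1)^{-2\beta}\le1$ and summing the geometric factors gives a finite value, so $P(\Phi_{\beta,\gamma})<\infty$ for all $\gamma>0$. In particular $\Phi_{\beta,\gamma}$ is acceptable for all $\gamma$ (the variation bound on each cylinder follows from \eqref{holder-eq1} in the proof of Lemma~\ref{holder}, which already gives $\sup_{[\mathbf a]}\Phi_{\beta,\gamma}-\inf_{[\mathbf a]}\Phi_{\beta,\gamma}\le C\beta$ uniformly in $\mathbf a$), so the hypotheses of Lemma~\ref{P-zero} that remain to be checked are the existence of a $\delta$ with $0<P(\Phi_{\beta,\delta})<\infty$. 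Given the finiteness just established, it suffices to produce one $\delta>0$ with $P(\Phi_{\beta,\delta})>0$, or equivalently — since $\gamma\mapsto P(\Phi_{\beta,\gamma})$ is continuous and strictly decreasing on $(0,\infty)$ by the argument in Lemma~\ref{P-zero} — to show that the partition sum exceeds $1$ for some small $\gamma>0$.

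For the lower bound on the pressure, I would let $\gamma\downarrow0$ and show the double series diverges at $\gamma=0$; since each term is positive and continuous in $\gamma$ and the tail sums increase as $\gamma\downarrow0$, divergence at $\gamma=0$ forces the $n=1$ truncated partition sum $\sum_{\mathbf a\in\mathbb A}\sup_{[\mathbf a]}\exp(\beta S_{|\mathbf a|}\phi-\gamma|\mathbf a|)$ to exceed $1$ for all sufficiently small $\gamma>0$, whence $P(\Phi_{\beta,\gamma})\ge\frac1n\log(\text{that sum})>0$ — more carefully, $P(\Phi_{\beta,\gamma})\ge\log(\sum_{\mathbf a\in\mathbb A}\sup_{[\mathbf a]}\exp\Phi_{\beta,\gamma})$ by the super-multiplicativity underlying the definition of $P$ (using that $\mathbb A^{\mathbb N}$ is a full shift, so $n$-cylinder sups factor up to the uniform distortion constant). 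The divergence at $\gamma=0$ of $\sum_{p\ge2}\sum_{n\ge1}(n+p-1)^{-2\beta}$ is where the hypothesis $\beta\le1$ enters: grouping by $m=n+p-1\ge2$ there are $m-1$ pairs $(p,n)$ with $p\ge2,n\ge1$ summing to $m$, so the series equals $\sum_{m\ge2}(m-1)m^{-2\beta}$, which diverges precisely when $2\beta-1\le1$, i.e. $\beta\le1$. Thus for $\beta\in(1/2,1]$ we obtain the required $\delta$, and Lemma~\ref{P-zero} yields $\gamma_0$ with $P(\Phi_{\beta,\gamma_0})=0$, which is slightly stronger than — and certainly implies — the claimed existence of $\gamma$ with $0\le P(\Phi_{\beta,\gamma})<\infty$.

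The main obstacle I anticipate is bookkeeping rather than conceptual: making the comparison $\exp S_{|\mathbf a|}\phi\asymp|\mathbf a|^{-2}$ uniform over all of $\mathbb A$ (not just eventually), which requires combining the distortion bound near the neutral fixed point \cite[Lemma~2.2]{Nak00} used in Lemma~\ref{holder} with the expansion estimate \eqref{interval}, and then tracking how the uniform multiplicative constant $C$ propagates through the passage from single-symbol sums to the limit defining $P(\Phi_{\beta,\gamma})$. The borderline case $\beta=1$ (where $\sum_{m\ge2}(m-1)m^{-2}\asymp\sum 1/m$ diverges) must be handled by this divergence argument rather than any clean closed form, but since we only need divergence, not a rate, the harmonic-series comparison suffices. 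One should also note that the inequality $0\le P(\Phi_{\beta,\gamma})$ with the value possibly equal to $0$ is exactly what the statement asks for, so even in the degenerate situation where the strict inequality $P(\Phi_{\beta,\delta})>0$ were to fail for the first $\delta$ tried, continuity and monotonicity let us slide $\gamma$ down until the pressure is nonnegative and finite.
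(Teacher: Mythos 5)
There is a genuine gap, and it originates in a combinatorial misreading of the induced alphabet. The symbol $\mathbf{a}=\bigcup_{q\ge2}[p1^{n-1}q]\in\mathbb A$ corresponds to the word $p1^{n-1}$, so $|\mathbf{a}|=n$: the return time is $n$ and does not depend on the first digit $p$. Correspondingly, $\sup_{[\mathbf{a}]}\exp S_{|\mathbf{a}|}\phi\asymp|J(\mathbf{a})|\asymp|J_p|\,n^{-2}\asymp p^{-2}n^{-2}$ (one factor from the branch $T|_{J_p}$, one from the passage near the neutral fixed point), not $(n+p-1)^{-2}$. Hence the one-block partition sum is comparable to $\sum_{p\ge2}p^{-2\beta}\sum_{n\ge1}n^{-2\beta}e^{-\gamma n}$, which \emph{converges} at $\gamma=0$ for every $\beta>1/2$. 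Your mechanism for producing a $\delta$ with $P(\Phi_{\beta,\delta})>0$ --- divergence of the one-block sum as $\gamma\downarrow0$ --- therefore fails on the whole range $\beta\in(1/2,1]$, and the grouping ``$m=n+p-1$, with $m-1$ pairs'' that makes $\beta\le1$ look like the critical exponent is an artifact of the wrong word length.

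The nonnegativity of the induced pressure has to come from elsewhere. The paper takes $\gamma=P(\beta\phi)$ and, for $\beta\in(1/2,1)$ where $P(\beta\phi)>0$, bounds $P(\Phi_{\beta,P(\beta\phi)})$ from below by $0$ via the variational principle combined with the Abramov--Kac formula applied to the normalized restriction of the equilibrium state $\mu_{\beta\phi}$ to $\Sigma$; this is a dynamical input that no estimate on one-block sums supplies. The endpoint $\beta=1$ must then be treated separately: there $P(\phi)=0$ and one checks directly that $P(\Phi_{1,0})=0$ by comparing $n$-block sums with the total length of the corresponding intervals. Note that for $\beta=1$ there is in fact \emph{no} $\delta$ with $0<P(\Phi_{1,\delta})<\infty$ (the pressure is $+\infty$ for $\delta<0$ and at most $-\delta<0$ for $\delta>0$), so the route through Lemma~\ref{P-zero} cannot be made to work at the endpoint at all; the weaker conclusion $0\le P<\infty$ is exactly what is attainable there. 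A smaller point: the bound $P(\Phi_{\beta,\gamma})\ge\log\bigl(\sum_{\mathbf{a}}\sup_{[\mathbf{a}]}\exp\Phi_{\beta,\gamma}\bigr)$ is not super-multiplicativity of sups (which goes the other way); one only gets $P\ge\log Z_1-D$ with $D$ the uniform one-block variation, which would have sufficed had $Z_1$ actually blown up --- but it does not.
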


\begin{proof}
From Lemma~\ref{holder}, $|T(J_p)|=1$ for $p\geq2$ and \eqref{interval}, there is $C>0$ such that for 
$\bold a=\bigcup_{q=2}^\infty[p1^{n-1}q]\in\mathbb A$
and all $\beta,\gamma\in\mathbb R$ we have
\[\frac{1}{|J_p|^\beta}\sup_{[\bold a]}\exp\Phi_{\beta,\gamma}=e^{-\gamma n }\sup_{[ \bold a]}\frac{\exp(\beta S_{n }\phi)}{|J_p|^\beta}\leq Ce^{-\gamma n }n^{-2\beta}.\]
 Summing the result over all $\bold a\in\mathbb A$, we have 
 \[
   \sum_{n=1}^\infty\sum_{\stackrel{\bold a\in\mathbb A}{|\bold a|=n}}\sup_{[\bold a]}\exp\Phi_{\beta,\gamma}\leq C \sum_{n=1}^\infty e^{-\gamma n}n^{-2\beta}\sum_{p=2}^\infty |J_p|^\beta\leq C \sum_{n=1}^\infty e^{-\gamma n}n^{-2\beta}\sum_{p=2}^\infty p^{-2\beta}.\]
 
Let
$\beta\in(1/2,1)$.
Then we have
 $P(\beta\phi)>0$ \cite{Iom10}, and 
  the above series
is finite for all $\gamma\in (0,P(\beta\phi)]$. In particular,
$P(\Phi_{\beta,P(\beta\phi)})$ is finite.
Since
any measure in $\mathcal M(X,\sigma)$ other than the unit point mass at $1^\infty=111\cdots$ charges $\Sigma$,
the equilibrium state $\mu_{\beta\phi}$ for the potential $\beta\phi$ satisfies $\mu_{\beta\phi}(\Sigma)>0$. Let $\hat\mu_{\beta\phi}$ denote the normalized restriction of $\mu_{\beta\phi}$ to $\Sigma$. Since $\tau$ is the first return map to $\Sigma$,
$\hat\mu_{\beta\phi}$ is $\tau|_{\Sigma}$-invariant and satisfies $\int R{\rm d}\hat\mu_{\beta\phi}<\infty$.
By the variational principle for $\Phi_{\beta,P(\beta\phi)}$ and
Abramov-Kac's formula 
\cite[Theorem~5.1]{Zwe05},
we obtain 
\[\begin{split}\infty>P(\Phi_{\beta,P(\beta\phi) })&\geq h(\hat\mu_{\beta\phi})+\int(\Phi_\beta-P(\beta\phi) R) {\rm d}\hat\mu_{\beta\phi}\\
&=
\left(F_{\beta\phi}(\mu_{\beta\phi})+P(\beta\phi)-P(\beta\phi) \right) \int  R {\rm d}\hat\mu_{\beta\phi}=0.\end{split}\]
We have verified\footnote{In fact, one can show $P(\Phi_{\beta,P(\beta\phi)})=0$. See \cite{PesSen08} for example.} that
$0\leq P(\Phi_{\beta,P(\beta\phi) })<\infty$ as required in Lemma~\ref{pr}.

For the remaining case $\beta=1$, we have
 $P(\phi)=0$ \cite{Iom10}.
From Lemma~\ref{holder} there is $C\geq1$ such that for  $n\geq1$ and $a_1\cdots a_n\in {\mathbb A}^n$,
\[C^{-1}\leq\frac{\left|\bigcap_{k=1}^{n}U^{-k}(J(a_{k}))\right|} 
{\sup_{[a_1\cdots a_n ]}\exp\left( \sum_{k=0}^{n-1}\Phi_{1,0 }\circ\tau^k\right)}\leq C.\]
Therefore, the sum of the denominator over all elements of
 $\mathbb A^n$ is uniformly bounded from both sides. 
This implies $P(\Phi_{1,0})=0$ as required in Lemma~\ref{pr}.
\end{proof}

Lemma~\ref{holder} and Lemmas~\ref{P-zero},
\ref{pr} altogether verify the assumption in Theorem~A
for the potential $\beta\phi$, $\beta\in(1/2,1]$.
It follows from \cite{Tak20} that for any $\beta\in(1/2,1]$,
any minimizer of the rate function $I_{\beta\phi}$ is an equilibrium state
for $\beta\phi$. Since the equilibrium state for $\beta\phi$ is unique \cite{Iom10},
the minimizer of the rate function $I_{\beta\phi}$ is unique. 
   Since the map $\pi$ in \eqref{pi} is continuous, 
   the assertions in Theorem~C follow from Theorems~A and B.
   \qed

     \subsection{Some generalizations}\label{sec-gen}
We have mainly worked on the two full shift spaces $X$ and $\Sigma$ (or $\mathbb A^\mathbb N$), the latter obtained from the former via inducing procedure as detailed in Section~\ref{symbolic}.
     The assumption that $X$ is the full shift 
has been used
 to construct sets of periodic points of the same period, in the proofs of exponential tightness (Lemma~\ref{measure}) and 
 the lower large deviation bound (Proposition~\ref{lowlem}).
 For the induced system  $(\Sigma,\tau|_\Sigma)$, we have effected 
 the thermodynamic formalism for countable Markov shifts \cite{MauUrb03}.

The setup in this paper can be slightly generalized.
The above-mentioned constructions of sets of periodic points can be done even if $X$ is replaced by a finitely primitive shift (see \cite{MauUrb03} for the definition). Then the induced shift space  becomes finitely irreducible, for which the thermodynamic formalism works too \cite{PU21}.

\subsection*{Acknowledgments}
I thank the referees for their careful readings of the manuscript and giving useful suggestions for improvements.
This research was partially supported by the JSPS KAKENHI 
19K21835 and 20H01811. 
\medskip

\noindent{\bf The conflict of interest statement:} We have no conflict of interest to disclose. \\

\noindent{\bf The date availability statement:} This article has no associated data.

\end{document}